\documentclass{fodsOF}
\usepackage{amsmath}
  \usepackage{paralist}
  \usepackage{graphics} %% add this and next lines if pictures should be in esp format
  \usepackage{epsfig} %For pictures: screened artwork should be set up with an 85 or 100 line screen
\usepackage{graphicx}  \usepackage{epstopdf}%This is to transfer .eps figure to .pdf figure; please compile your paper using PDFLeTex or PDFTeXify.
 \usepackage[colorlinks=true]{hyperref}
\hypersetup{urlcolor=blue, citecolor=red}

  \textheight=8.2 true in
   \textwidth=5.0 true in
    \topmargin 30pt
     \setcounter{page}{1}

% The next 5 line will be entered by an editorial staff.

%\usepackage{amsthm,natbib,amssymb,graphicx}
%\usepackage{color}
%\usepackage{amsfonts}
%\usepackage{amssymb}
\usepackage{amsmath,bbm,url}
\usepackage[large]{subfigure}

\newtheorem{thm}{Theorem}%[section]
\newtheorem{conj}{Conjecture}

\newtheorem{assum}{Assumption}

\theoremstyle{definition}

\newtheorem{Def}[thm]{Definition}
\newtheorem{examp}{Example}

\makeatletter
\newcommand{\xleftrightarrow}[2][]{\ext@arrow 3359\leftrightarrowfill@{#1}{#2}}
\newcommand{\xdashrightarrow}[2][]{\ext@arrow 0359\rightarrowfill@@{#1}{#2}}
\newcommand{\xdashleftarrow}[2][]{\ext@arrow 3095\leftarrowfill@@{#1}{#2}}
\newcommand{\xdashleftrightarrow}[2][]{\ext@arrow 3359\leftrightarrowfill@@{#1}{#2}}
\def\rightarrowfill@@{\arrowfill@@\relax\relbar\rightarrow}
\def\leftarrowfill@@{\arrowfill@@\leftarrow\relbar\relax}
\def\leftrightarrowfill@@{\arrowfill@@\leftarrow\relbar\rightarrow}
\def\arrowfill@@#1#2#3#4{%
  $\m@th\thickmuskip0mu\medmuskip\thickmuskip\thinmuskip\thickmuskip
   \relax#4#1
   \xleaders\hbox{$#4#2$}\hfill
   #3$%
}
\makeatother

%% Place the running title of the paper with 40 letters or less in []
 %% and the full title of the paper in { }.
\title[Consistent Manifold Representation] %Use the shortened version of the full title
      {Consistent manifold representation for topological data analysis}

% Place all authors' names in [ ] shown as running head, Leave { } empty
% Please use `and' to connect the last two names if applicable
% Use FirstNameInitial.  MiddleNameInitial. LastName, or only last names of authors if there are too many authors
\author[Tyrus Berry and Timothy Sauer]{}

% It is required to enter 2010 MSC.
\subjclass{Primary: 58J65, 62H30; Secondary: 62G07.}
% Please provide minimum  5 keywords.
 \keywords{Topological data analysis, Laplace-de Rham operator, manifold learning, spectral clustering, geometric prior}

% Email address of each of all authors is required.
% You may list email addresses of all other authors, separately.
 \email{tberry@gmu.edu}
 \email{tsauer@gmu.edu}

% Put your short thanks below. For long thanks/acknowlegements,
%please go to the last acknowlegments section.
\thanks{The authors are partially supported by NSF grant DMS-1723128}

% Add corresponding author at the footnote of the first page if it is necessary.
% Plase add $^*$ adjacent to the corresponding author's name on the first page.
% The example shown in this template is if the first author is the corresponding author.
\thanks{$^*$ Corresponding author: Timothy Sauer}

\begin{document}

\maketitle
% Enter the first author's name and address:
\centerline{\scshape Tyrus Berry and Timothy Sauer$^*$}
\medskip
{\footnotesize
% please put the address of the first author
 \centerline{Department of Mathematical Sciences}
   \centerline{ Fairfax, VA 22030, USA}
} % Do not forget to end the {\footnotesize by the sign }

%\medskip
%
%\centerline{\scshape Timothy Sauer$^*$}
%\medskip
%{\footnotesize
% % please put the address of the second  and third author
% \centerline{Department of Mathematical Sciences}
%   \centerline{ Fairfax, VA 22030, USA}
%}

%\bigskip

%The abstract of your paper
\begin{abstract}
For data sampled from an arbitrary density on a manifold embedded in Euclidean space, the \emph{Continuous k-Nearest Neighbors} (CkNN) graph construction is introduced.  It is shown that CkNN is geometrically consistent in the sense that under certain conditions, the unnormalized graph Laplacian converges to the Laplace-Beltrami operator, spectrally as well as pointwise.   It is proved for compact (and conjectured for noncompact) manifolds that CkNN is the unique unweighted construction that yields a geometry consistent with the connected components of the underlying manifold in the limit of large data. Thus CkNN produces a single graph that captures all topological features simultaneously, in contrast to persistent homology, which represents each homology generator at a separate scale.  As applications we derive a new fast clustering algorithm and a method to identify patterns in natural images topologically. Finally, we conjecture that CkNN is topologically consistent, meaning that the homology of the Vietoris-Rips complex (implied by the graph Laplacian) converges to the homology of the underlying manifold (implied by the Laplace-de Rham operators) in the limit of large data.
\end{abstract}

% The name of the associate editor will be entered by an editorial staff
% "Communicated by the associate editor name" is not needed for special issue.
% \centerline{(Communicated by the associate editor name)}

\section{Introduction}\label{intro}

Building a discrete representation of a manifold from a finite data set is a fundamental problem in machine learning. Particular interest pertains to the case where a set of data points in a possibly high-dimensional Euclidean space is assumed to lie on a relatively low-dimensional manifold. The field of topological data analysis  (TDA) concerns the extraction of topological invariants such as homology from discrete measurements.

Currently, there are two major methodologies for representing manifolds from data sets. One approach is an outgrowth of Kernel PCA \cite{scholkopf1998nonlinear}, using graphs with {\it weighted}  edges formed by localized kernels to produce an operator that converges to the Laplace-Beltrami operator of the manifold. These methods include versions of diffusion maps \cite{belkin2003laplacian,diffusion,localk,BH14} that reconstruct the geometry of the manifold with respect to a desired metric. Convergence of the weighted graph to the Laplace-Beltrami operator in the large data limit is called {\it consistency} of the graph construction. Unfortunately, while such constructions implicitly contain all topological information about the manifold, it is not yet clear how to use a weighted graph to build a simplicial complex from which simple information like the Betti numbers can be extracted.

A second approach, known as persistent homology \cite{carlsson2009topology,edels2010,ghrist2008}, produces a series of {\it unweighted} graphs that reconstructs topology one scale at a time, tracking homology generators as a scale parameter is varied. The great advantage of an unweighted graph is that the connection between the graph and a simplicial complex is immediate, since the Vietoris-Rips
construction can be used to build an abstract simplicial complex from the graph. However, the persistent homology approach customarily creates a family of graphs, of which none is guaranteed to contain all topological information. The goal of a consistent theory is not possible since there is not a single unified homology in the large data limit.

In this article we propose replacing persistent homology with consistent homology in data analysis applications. In other words, our goal is to show that it is possible to construct a single unweighted graph from which the underlying manifold's topological information can be extracted. We introduce a specific graph construction from a set of data points, called continuous k-nearest neighbors (CkNN),  that achieves this goal for any compact Riemannian manifold. Theorem \ref{consgeom} states that the CkNN is the unique unweighted graph construction for which the (unnormalized) graph Laplacian converges spectrally to a Laplace-Beltrami operator on the manifold in the large data limit. Furthermore, even when the manifold is not compact we compute the optimal bias-variance tradeoff, and show that CkNN always results in a geometry where the Laplace-Beltrami operator has a discrete spectrum.

The proof of consistency for the CkNN graph construction is carried out in Section \ref{background} for both weighted and unweighted graphs. There we complete the theory of graphs constructed from variable bandwidth kernels, computing for the first time the bias and variance of both pointwise and spectral estimators. Combined with existing work on spectral convergence \cite{von2008consistency,spec0,spec1,spec2,spec3} we obtain consistency.  Our analysis reveals the surprising fact that the optimal bandwidth for spectral estimation is significantly smaller than the optimal choice for pointwise estimation (see Fig.~\ref{figureApp}).  This is crucial because existing statistical estimates \cite{SingerEstimate,BH14} imply very different parameter choices that are not optimal for the spectral convergence desired in most applications.  Moreover, requiring the spectral variance to be finite allows us to specify which geometries are accessible on non-compact manifolds.  
%Finally, combining our statistical estimates with the theory of \cite{von2008consistency}, we provide the first proofs of spectral convergence for graph Laplacians on non-compact manifolds.  
Details on the relationship of our new results to previous work are given in Section \ref{background}.

As mentioned above, the reason for focusing on unweighted graphs is their relative simplicity for topological investigation.  In an unweighted graph construction, one can apply the depth first search algorithm to determine the zero homology. On the other hand, there are many weighted graph constructions that converge to the Laplace-Beltrami operator with respect to various geometries \cite{belkin2003laplacian,diffusion,localk,BH14}.  Although these methods are very powerful, they are not convenient for extracting topological information.  For example, to determine the zero homology from a weighted graph requires numerically estimating the dimension of the zero eigenspace of the graph Laplacian, much less efficient than a depth-first search.  Secondly, determination of the number of zero eigenvalues requires setting a nuisance parameter as a numerical threshold. For higher-order homology generators, the problem is even worse, as weighted graphs require the construction of the Laplace-de Rham operators which act on differential forms.  (We note that the $0$-th Laplace-de Rham operator acts on function, or $0$-forms, and is called the Laplace-Beltrami operator.)  In contrast, the unweighted graph construction allows the manifold to be studied using topological data analysis methods that are based on simplicial homology (e.g.~computed from the Vietoris-Rips complex).

The practical advantages of the CkNN are: (1) a single graph representation of the manifold that captures topological features at multiple scales simultaneously (see Fig.~\ref{clusteringFig3}) and (2) identification of the correct topology even for non-compact sampling measures (see Fig.~\ref{clusteringFig2}).  In CkNN, the length parameter $\epsilon$ is eliminated, and replaced with a unitless scale parameter $\delta$. Fortunately, for computational purposes, the consistent homology in terms of $\delta$ uses the same efficient computational homology algorithms as conventional persistent homology.

For some applications, the unitless parameter $\delta$ may be a disadvantage; for example, if the distance scales of particular features need to be  explicitly separated. In such a case, absolute distances are meaningful, units are important, and the standard $\epsilon$ ball persistence diagram may be more relevant to the problem than a single consistent topology. The point of this article is that if one is truly interested only in the topology in TDA, a unitless $\delta$ is more appropriate and leads to a consistency theory.  Finally, for a fixed data set, the consistent homology approach requires choosing the parameter $\delta$ (which determines the CkNN graph) and we re-interpret the classical persistence diagram as a tool for selecting $\delta$.

We introduce the CkNN in Section \ref{sec2}, and demonstrate its advantages in topological data analysis by considering a simple but illustrative example. In Section \ref{TDA} we show that consistent spectral estimation of the Laplace-de Rham operators is the key to consistent estimation of topological features.  In particular, the key to estimating the connected components of a manifold is spectral estimation of the Laplace-Beltrami operator. We show how these results guarantee consistency of the connected components (clustering), and combined with recent results \cite{berry2018spectral} on spectral exterior calculus (SEC), imply consistency of the higher order homology. In Section \ref{uniqueconstruction}, these results are used to show that the CkNN is the unique unweighted graph construction which yields a consistent geometry via the Laplace-Beltrami operator on functions.  We give several examples that demonstrate the consistency of the CkNN construction in Section \ref{spectralclustering}, including a fast and consistent clustering algorithm that allows more general sampling densities than existing theories.  Theoretical results are given in Section \ref{background}. We conclude in Section \ref{conclusion} by discussing the relationship of CkNN to classical persistence. In this article, we focus on applications to TDA, but the theoretical results will be of independent interest to those studying the geometry as well as topology of data.

\section{Continuous scaling for unweighted graphs}\label{sec2}

We begin by describing the CkNN graph construction and comparing it to other approaches.  Then we discuss the main issues of this article as applied to a simple example of data points arranged into three rectangles with nonuniform sampling.

\subsection{Continuous k-Nearest Neighbors}
Our goal is to create an unweighted, undirected graph from a point set with interpoint distances given by a metric $d$. Since the data points naturally form the vertices of a graph representation, for each pair of points we only need to decide whether or not to connect these points with an edge.  There are two standard approaches for constructing the graph:
\begin{enumerate}
\item {\bf Fixed $\epsilon$-balls}: For a fixed $\epsilon$, connect the points $x,y$ if $d(x,y) < \epsilon$.
%
%\medskip

\item {\bf k-Nearest Neighbors (kNN)}: For a fixed integer $k$, connect the points $x,y$ if either $d(x,y) \leq d(x,x_{k})$ or $d(x,y) \leq d(y,y_{k})$ where $x_{k}, y_k$ are the $k$-th nearest neighbors of $x,y$ respectively.
\end{enumerate}
The fixed $\epsilon$-balls choice works best when the data is uniformly distributed on the manifold, whereas the kNN approach adapts to the local sampling density of points.  However, we will see that even when answering the simplest topological questions, both standard approaches have severe limitations. For example, when clustering a data set into connected components, they may underconnect one part of the graph and overestimate the number of components, while overconnecting another part of the graph and bridging parts of the data set that should not be connected.
Despite these drawbacks, the simplicity of these two graph constructions has led to their widespread use in manifold learning and topological data analysis methods \cite{carlsson2009topology}.

Our main point is that a less discrete version of kNN sidesteps these problems, and can be proved to lead to a consistent theory in the large data limit. Define the \emph{Continuous k-Nearest Neighbors} (CkNN) graph construction by
\begin{enumerate}
\item[3.] {\bf CkNN}: Connect the points $x,y$ if ${\displaystyle d(x,y) < \delta \sqrt{ d(x,x_k)d(y,y_k)}}$
\end{enumerate}
where the parameter $\delta$ is allowed to vary continuously.  Of course, the discrete nature of the (finite) data set implies that the graph will change at only finitely many values of $\delta$.  The continuous parameter $\delta$ has two uses.  First, it allows asymptotic analysis of the graph Laplacian in terms of $\delta$, where we interpret the CkNN graph construction as a kernel method.  Second, it allows the parameter $k$ to be fixed for each data set, which allows us to interpret $d(x,x_k)$ as a local density estimate.

The CkNN construction is closely related to the ``self-tuning" kernel introduced in \cite{ZP} for the purposes of spectral clustering, which was defined as
\begin{equation}\label{zpkernel} K(x,y) = \exp\left(-\frac{d(x,y)^2}{d(x,x_k)d(y,y_k)}\right). \end{equation}
The kernel (\ref{zpkernel}) leads to a weighted graph, but replacing the exponential kernel with the indicator function
\begin{equation}\label{cutoffkernel} K(x,y) = \mathbbm{1}_{\left\{\frac{d(x,y)^2}{d(x,x_k)d(y,y_k)}<1 \right\}} \end{equation}
 and introducing the continuous parameter $\delta$ yields the CkNN unweighted graph construction.  The limiting operator of the graph Laplacian based on the kernels \eqref{zpkernel} and \eqref{cutoffkernel} was first analyzed pointwise in \cite{Ting2010,BH14}.  In Sec.~\ref{background} we provide the first complete analysis of the spectral convergence of these graph Laplacians, along with the bias and variance of the spectral estimates.

The CkNN is an instance of a broader class of multi-scale graph constructions:
\begin{enumerate}
 \item[(*)] {\bf Multi-scale}: Connect the points $x,y$ if $d(x,y) < \delta \sqrt{\rho(x)\rho(y)}$
 \end{enumerate}
where $\rho(x)$ defines the local scaling near the point $x$.  In Section \ref{uniqueconstruction} we will show that
\begin{equation}\label{densityBandwidth} \rho(x) \propto q(x)^{-1/m} \end{equation}
is the unique multi-scale graph construction that yields a consistent limiting geometry, where $q(x)$ is the sampling density and $m$ is the intrinsic dimension of the data.

In applications to data, neither $q(x)$ nor $m$ may be known beforehand.
Fortunately, for points on a manifold embedded in Euclidean space, the kNN itself provides a very simple density estimator, which for $k$ sufficiently small approximately satisfies
\begin{equation}\label{kNNdensity} ||x-x_k|| \propto q(x)^{-1/m}  \end{equation}
where $x_k$ is the $k$-th nearest neighbor of $x$ and $m$ is the dimension of the underlying manifold \cite{loftsgaarden65}. Although more sophisticated kernel density estimators could be used (see for example \cite{ScottVBK}), a significant advantage of \eqref{kNNdensity} is that it implicitly incorporates the exponent $-1/m$ without the need to estimate the intrinsic dimension $m$ of the underlying manifold.

In the next section, we demonstrate the advantages of the CkNN on a simple example before turning to the theory of consistent topological estimation.

\subsection{Example: Representing non-uniform data}\label{largedata}

\begin{figure}
\begin{center}
\subfigure[]{\includegraphics[width=.4\linewidth]{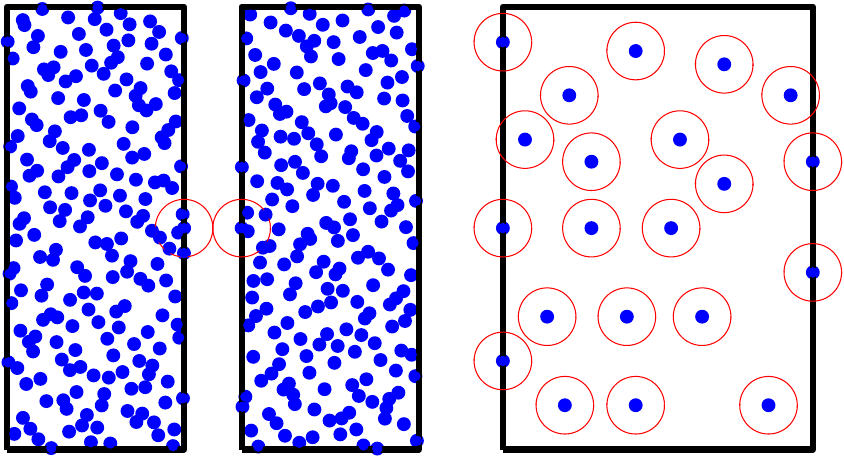}}\hspace{.1\linewidth}
\subfigure[]{\includegraphics[width=.41\linewidth]{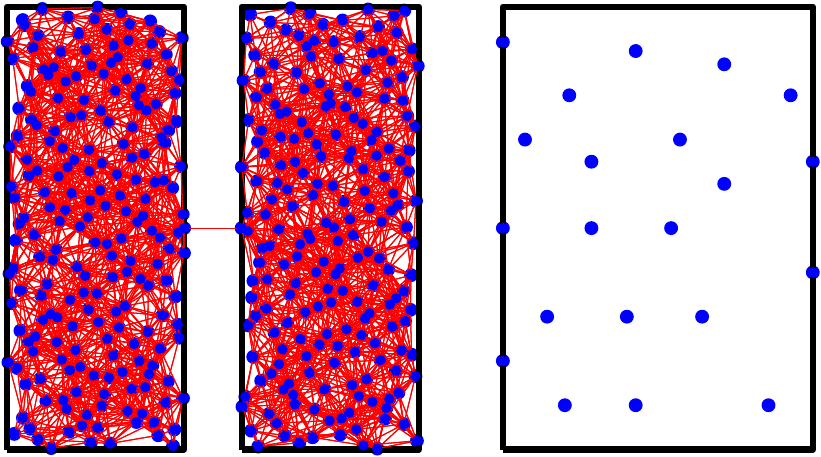}}
\end{center}
\caption{ \rm Rectangular regions indicate the true clusters. (a) Circles of a fixed radius $\epsilon$ show that bridging of the dense regions occurs before any connections are made in the sparse region. (b) Graph connecting all points with distance less than $\epsilon$. }\label{figure7}
\end{figure}

In this section we start with a very simple example  where both the fixed $\epsilon$-ball and simple kNN graph constructions fail to identify the correct topology.
\begin{examp} \rm
Fig.~\ref{figure7} shows a simple ``cartoon'' of non-uniform sampling of data that is common in real applications, and reveals the weakness of standard graph constructions. All the data points lie in one of the three rectangular connected components outlined in Fig.~\ref{figure7}(a). The left and middle components are densely sampled and the right component is more sparsely sampled.  Consider the radius $\epsilon$ indicated by the circles around the data points in Fig.~\ref{figure7}(a). At this radius, the points in the sparse component are not connected to any other points in that component.  This $\epsilon$ is too small for the connectivity of the sparse component to be realized, but at the same time is too large to distinguish the two densely sampled components.  A graph built by connecting all points within the radius $\epsilon$, shown in Fig.~\ref{figure7}(b), would find many spurious components in the sparse region while simultaneously improperly connecting the two dense components.  We are left with a serious failure: The graph cannot be tuned, with any fixed $\epsilon$, to identify the ``correct'' three boxes as components.
\end{examp}

\begin{figure}
\begin{center}
\subfigure[]{\includegraphics[width=.4\linewidth]{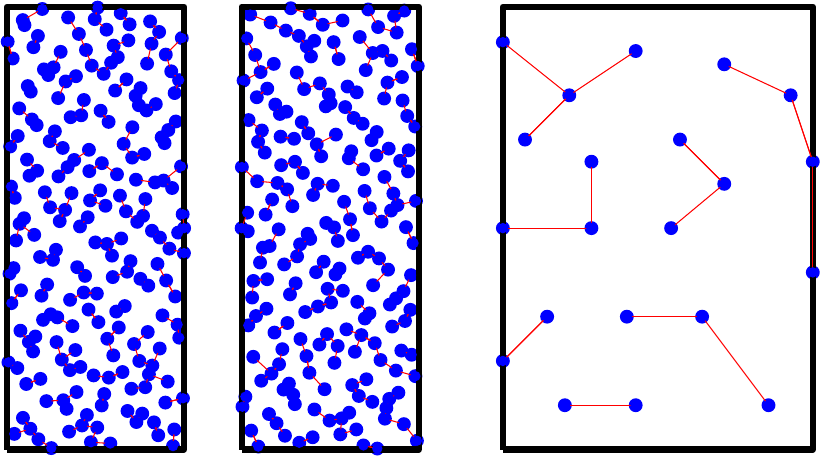}}\hspace{.1\linewidth}
\subfigure[]{\includegraphics[width=.4\linewidth]{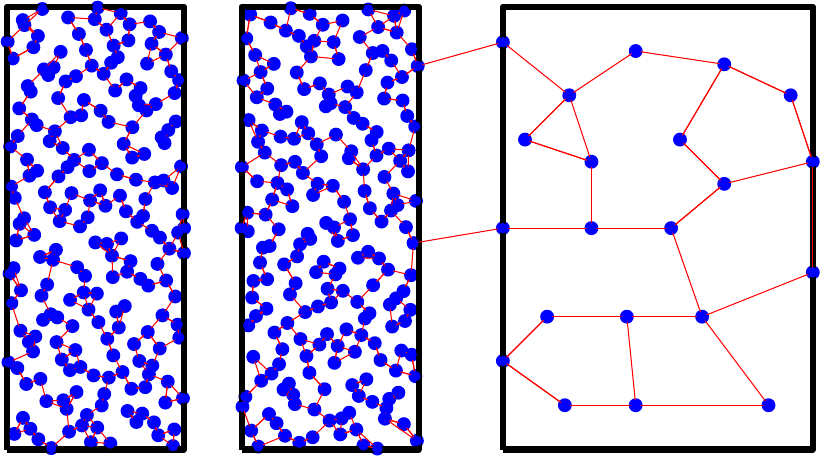}}
\end{center}
\caption{ \rm Data set from Fig.~\ref{figure7}. (a) Graph based on connecting each point to its (first) nearest neighbor leaves all regions disconnected internally.  (b) Connecting each point to its two nearest neighbors bridges the sparse region to one of the dense regions before fully connecting the left and center boxes.}\label{figure8}
\end{figure}

The kNN approach to local scaling is to replace the fixed $\epsilon$ approach with the establishment of edges between each point and its $k$-nearest neighbors.  While this is an improvement, in Fig.~\ref{figure8} we show that it still fails to reconstitute the topology even for the very simple data set considered in Fig.~\ref{figure7}.  Notice that in Fig.~\ref{figure8}(a) the graph built based on the nearest neighbor ($k=1$) leaves all regions disconnected, while using two nearest neighbors ($k=2$) incorrectly bridges the sparse region with a dense region, as shown in Fig.~\ref{figure8}(b). Of course, using kNN with $k>2$ will have the same problem as $k=2$. Fig.~\ref{figure8} shows that simple kNN is not a panacea for nonuniformly sampled data.

Finally, we demonstrate the CkNN graph construction in Fig.~\ref{figure9}. An edge is added between points $x$ and $y$ when $d(x,y) < \delta \sqrt{d(x,x_k) d(y,y_k)}$. We denote the $k$th-nearest neighbor of $x$ (resp., $y$) by $x_k$ (resp., $y_k$). The coloring in Fig.~\ref{figure9}(a) exhibits the varying density between boxes. Assigning edges according to CkNN with $k=10$ and optimally tuning $\delta$, shown in Fig.~\ref{figure9}(b), yields an unweighted graph whose connected components reflect the manifold in the large data limit. Theorem \ref{consgeom} guarantees the existence of such a $\delta$, that yields an unweighted CkNN graph with correct topology.

\begin{figure}
\begin{center}
\subfigure[]{\includegraphics[width=.4\linewidth]{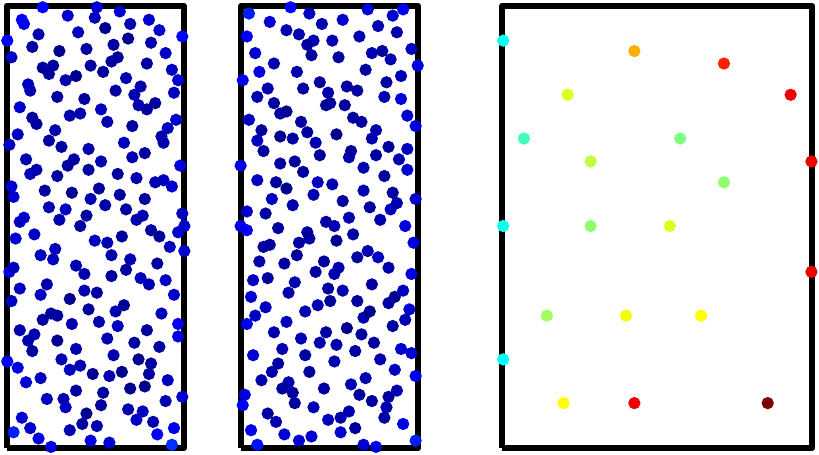}}\hspace{.1\linewidth}
\subfigure[]{\includegraphics[width=.4\linewidth]{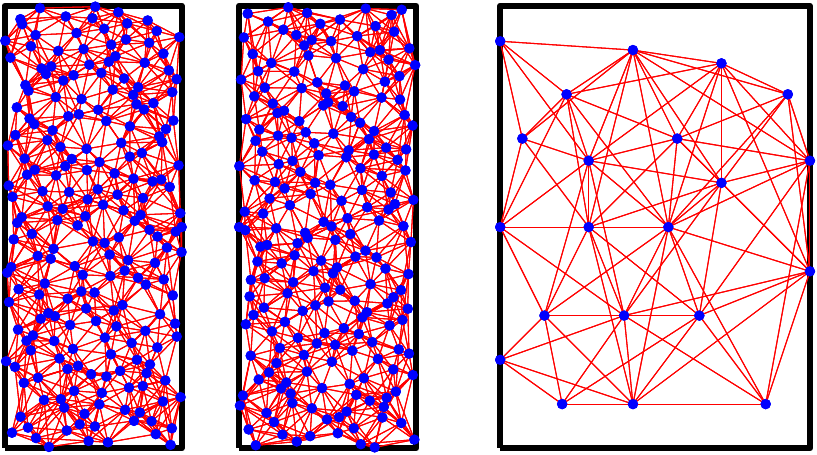}}
\end{center}
\caption{ \rm Data set from Fig.~\ref{figure7}.  (a) Color indicates the relative values of the bandwidth function $\rho(x) = d(x,x_k)$ using $k=10$ and optimally tuning $\delta$ (blue = low, red = high). (b) Graph connecting pairs of data points with $||x-y|| < \delta \sqrt{\rho(x)\rho(y)}$.  Notice that the connected components are fully connected and triangulated, yielding the correct homology in dimensions $0, 1$, and $2$.}\label{figure9}
\end{figure}

Although we have focused on the connected components of the point set, the CkNN graph in Fig.~\ref{figure9} fully triangulates all regions, which implies that the $1$-homology is correctly identified as trivial.  Clearly, the graph constructions in Figures \ref{figure7} and \ref{figure8} are very far from identifying the correct $1$-homology.

To complete our analysis of the three-box example, we compare CkNN to a further alternative.  Two crucial features of the CkNN graph construction are (1) symmetry in $x$ and $y$ which implies an undirected graph construction, and (2) introduction of the continuous parameter $\delta$ which allows $k$ to be fixed so that $\rho(x)=||x-x_k||$ is an estimator of $q(x)^{-1/m}$.  There are many alternative ways of combining the local scaling function $\rho(x)$ with the continuous parameter $\delta$.  Our detailed theoretical analysis in Sec.~\ref{background} shows that the geometric average $\delta\sqrt{\rho(x)\rho(y)}$ is consistent in the large data limit, but it does not discount all alternatives.

\begin{figure}
\begin{center}
\subfigure[]{\includegraphics[width=.45\linewidth]{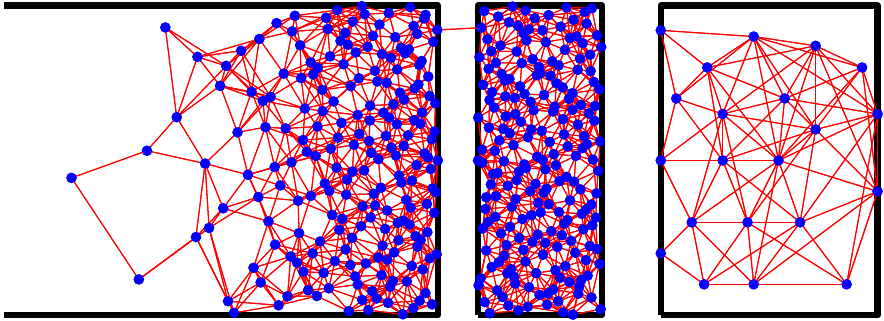}}\hspace{.05\linewidth}
\subfigure[]{\includegraphics[width=.45\linewidth]{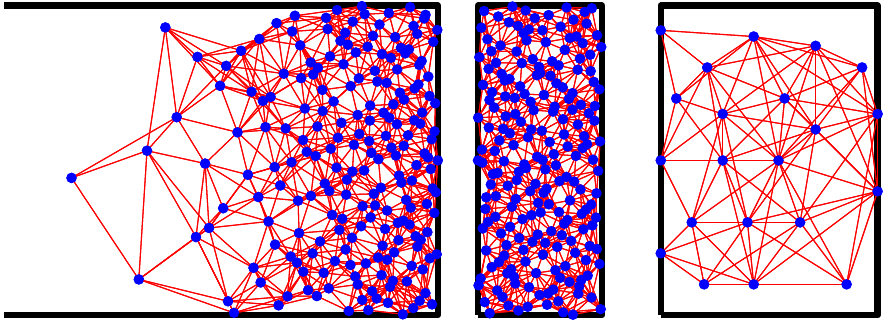}}
\end{center}
\vspace*{-8pt}\caption{ \rm Non-compact version of data set from Fig.~\ref{figure7}, density of leftmost `box' decays continuously to zero. (a) The kNN `AND' construction bridges the components while the 1-homology still has spurious generators (holes) in the sparse regions.  (b) The CkNN is capable of capturing the correct homology for this data set.  Both methods use $k=10$ and the value of $\delta$ was tuned to give the best results for each method.  Decreasing $\delta$ for (a) would obtain the correct clusters but introduce more spurious holes.}\label{SkNN2}\vspace*{-8pt}
\end{figure}

For example, we briefly consider the much less common `AND' construction for kNN, where points are connected when $d(x,y) \leq \min\{d(x,x_k),d(y,y_k)\}$ (as opposed to standard kNN which uses the $\max$).
Intuitively, the advantage of the `AND' construction is that it will not incorrectly connect dense regions to sparse regions because it takes the smaller of the two kNN distances.  However, on a non-compact domain, shown in Fig.~\ref{SkNN2}, this construction does not identify the correct homology whereas the CkNN does. We conclude the `AND' version of kNN is not generally superior to CkNN. Moreover, our analysis in Sec.~\ref{background} does not apply to this alternative, due to the fact that the $\max$ and $\min$ functions are not differentiable, making their analysis more difficult than the geometric average used by CkNN.

\subsection{Multiscale homology}\label{multiscaleSection}

In Section \ref{uniqueconstruction} we will see that the geometry represented by the CkNN graph construction captures the true topology with comparatively little data by implicitly choosing a geometry that is adapted to the sampling measure.  The CkNN construction yields a natural multi-scale geometry, which is assumed to be very smooth in regions of low density and can have finer features in regions of dense sampling.  Since all geometries yield the same topology, this multi-scale geometry is a natural choice for studying the topology of the underlying manifold, and this advantage is magnified for small data sets.  In Fig.~\ref{clusteringFig3} we demonstrate the effect of this geometry on the persistent homology for a small data set with multiple scales.  Following that, in Fig.~\ref{clusteringFig2} we show how the CkNN graph construction can capture the homology even for a non-compact manifold.

\begin{figure}
\begin{center}
\subfigure[]{\includegraphics[width=.4\linewidth]{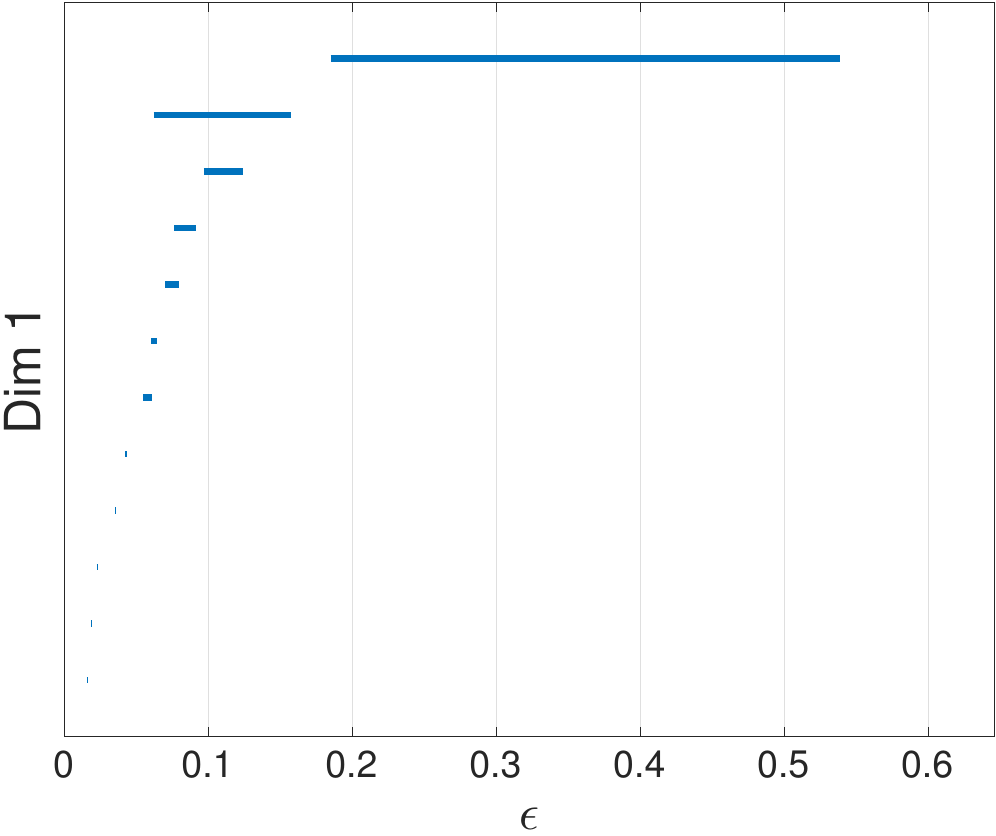}}\hspace{.1\linewidth}
\subfigure[]{\includegraphics[width=.4\linewidth]{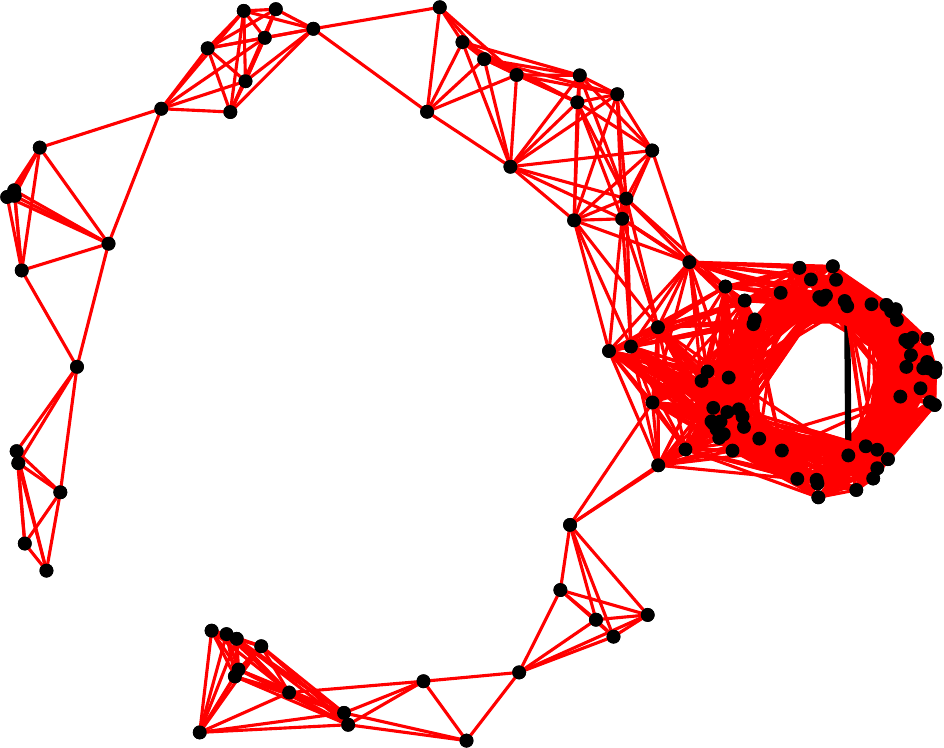}} \\
\subfigure[]{\includegraphics[width=.4\linewidth]{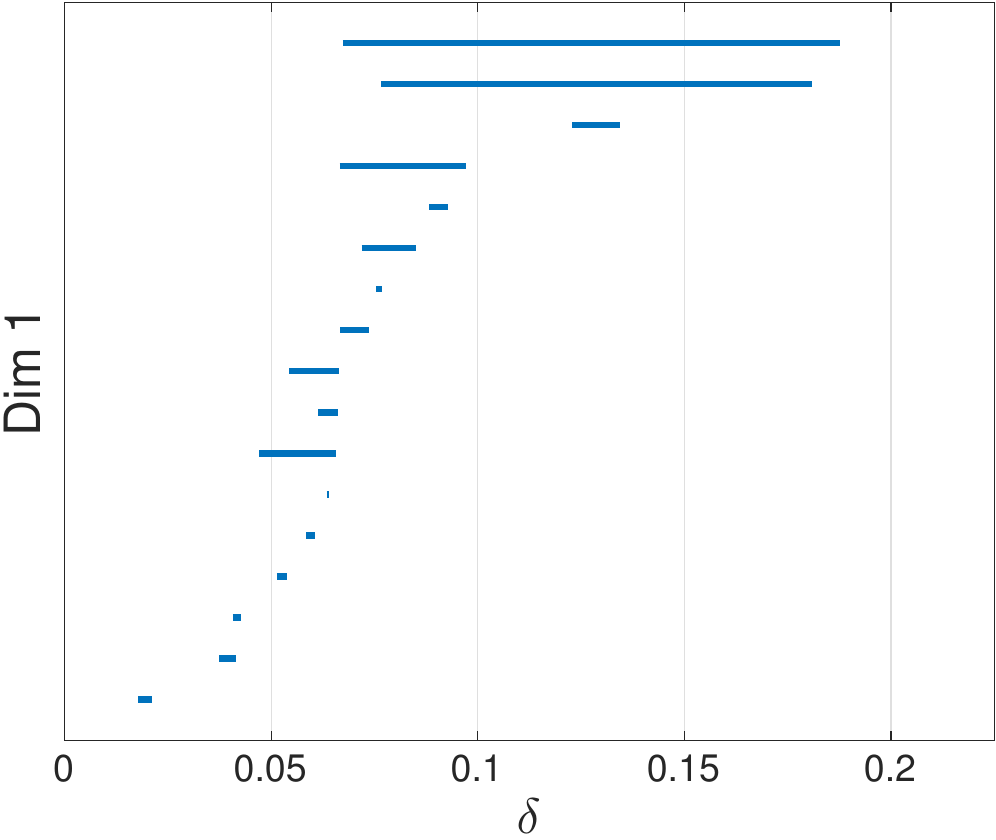}}\hspace{.1\linewidth}
\subfigure[]{\includegraphics[width=.4\linewidth]{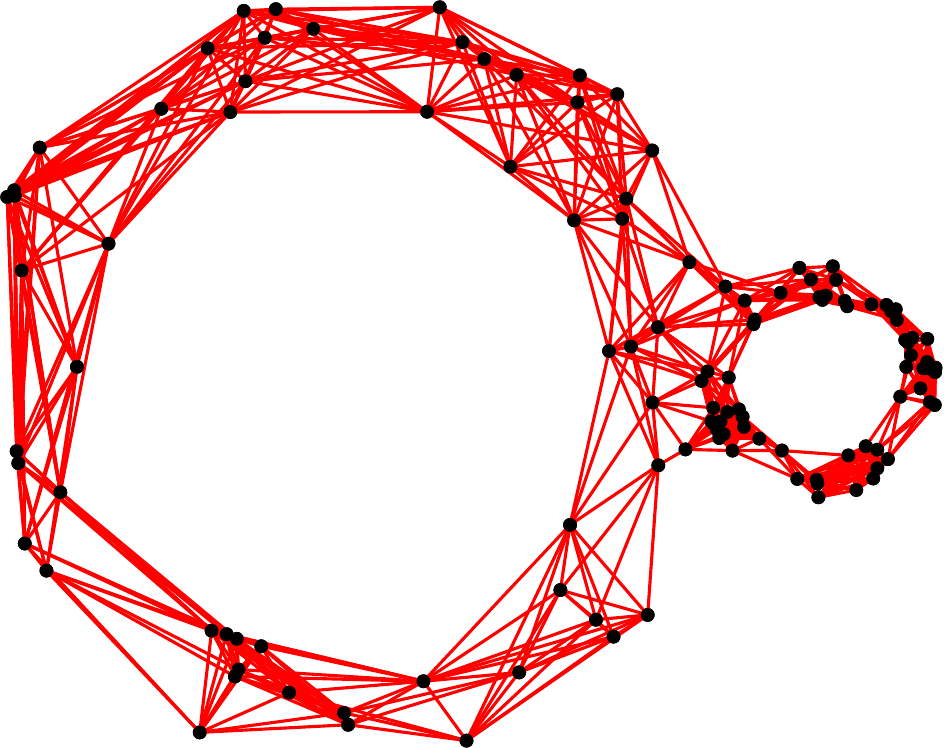}}
\end{center}
\vspace*{-6pt}\caption{ \rm (a) Standard $\epsilon$-ball persistent 1-homology reveals that no value of $\epsilon$ captures both holes simultaneously: Each homology class persists at separate $\epsilon$ scales. (b) When $\epsilon=0.16$ we see that the small hole is completely triangulated (the black segment completes the triangulation), while the large hole is still not connected. (c) CkNN persistent 1-homology as a function of $\delta$ shows that both homology classes (top two lines) are present simultaneously over a large range of $\delta$ values. (d) When $\delta=0.1$,  the resulting graph represents the true topology of the manifold.}
\label{clusteringFig3}\vspace*{-6pt}
\end{figure}

\begin{figure}
\begin{center}
\subfigure[]{\includegraphics[width=.32\linewidth]{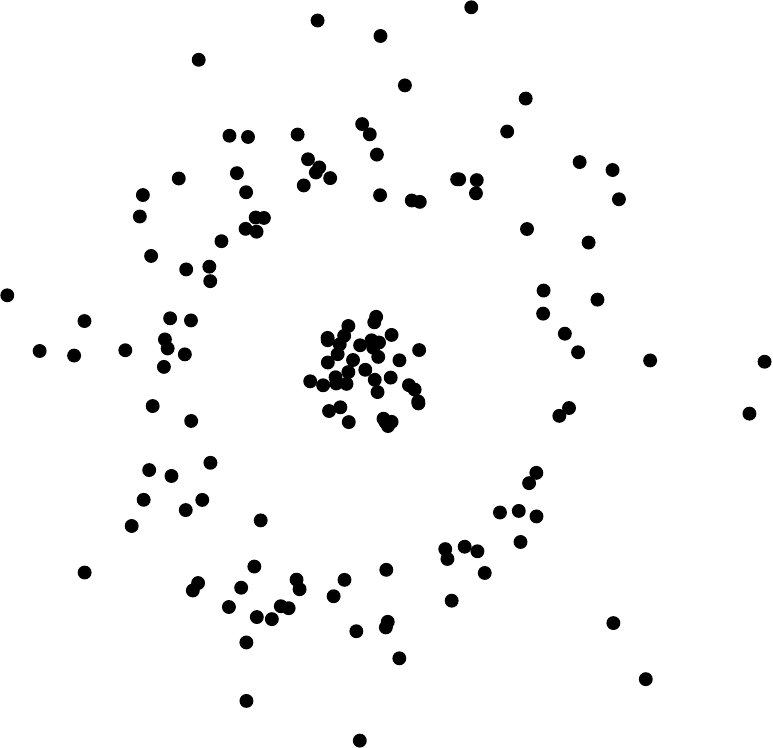}}\hspace{.1\linewidth}
\subfigure[]{\includegraphics[width=.38\linewidth]{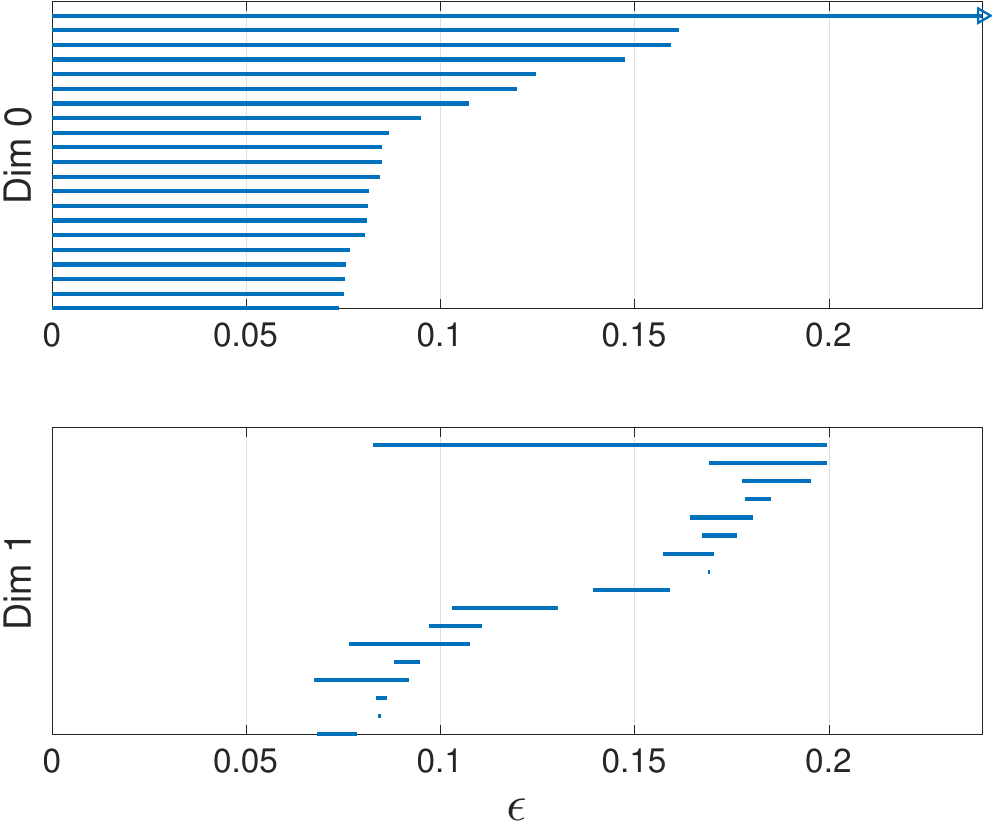}} \\
\subfigure[]{\includegraphics[width=.32\linewidth]{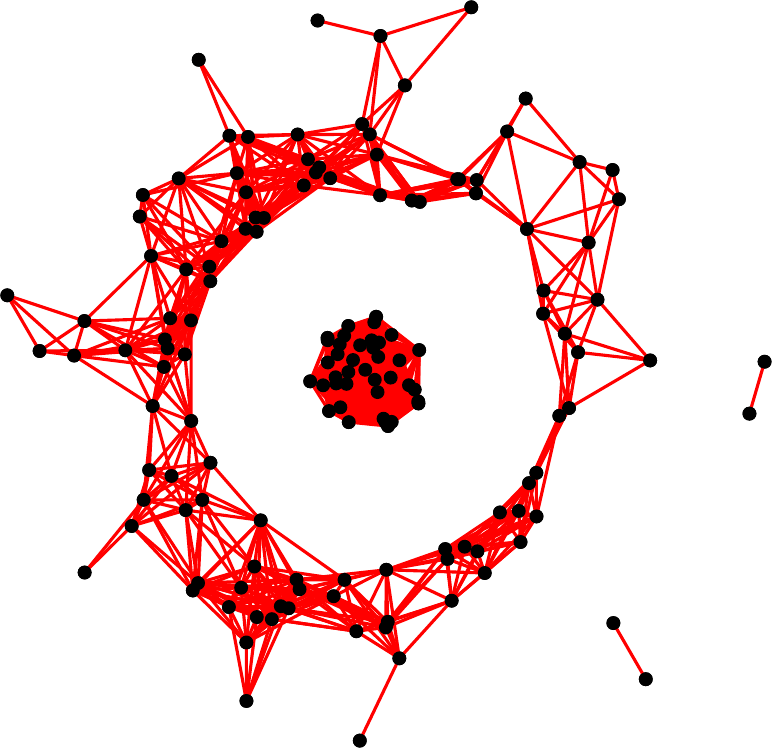}}\hspace{.16\linewidth}
\subfigure[]{\includegraphics[width=.32\linewidth]{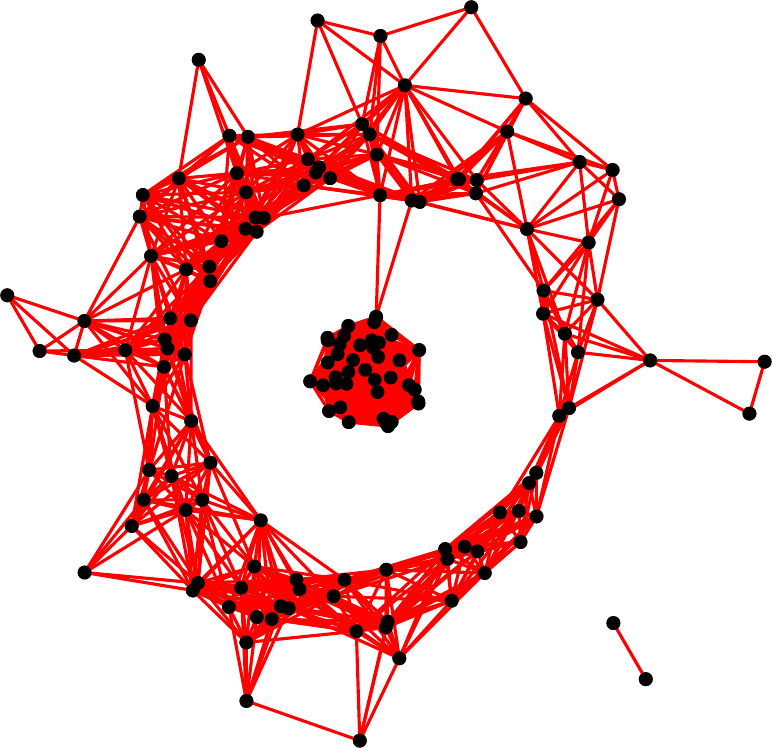}} \\
\subfigure[]{\includegraphics[width=.38\linewidth]{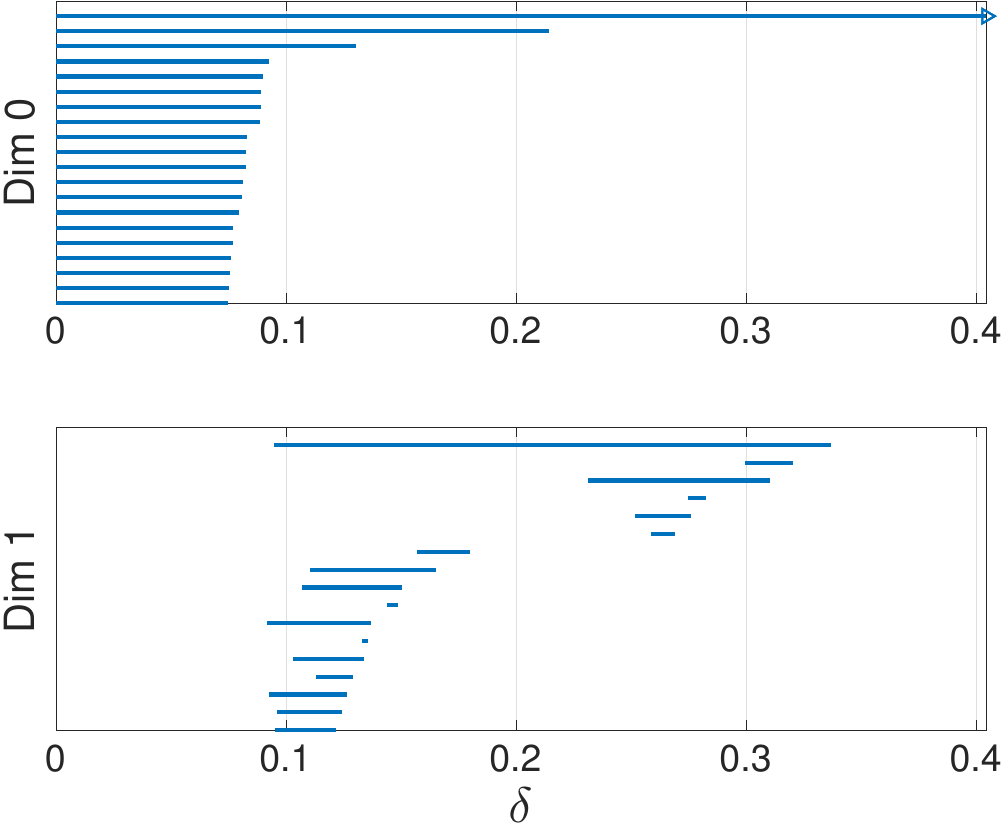}}\hspace{.1\linewidth}
\subfigure[]{\includegraphics[width=.32\linewidth]{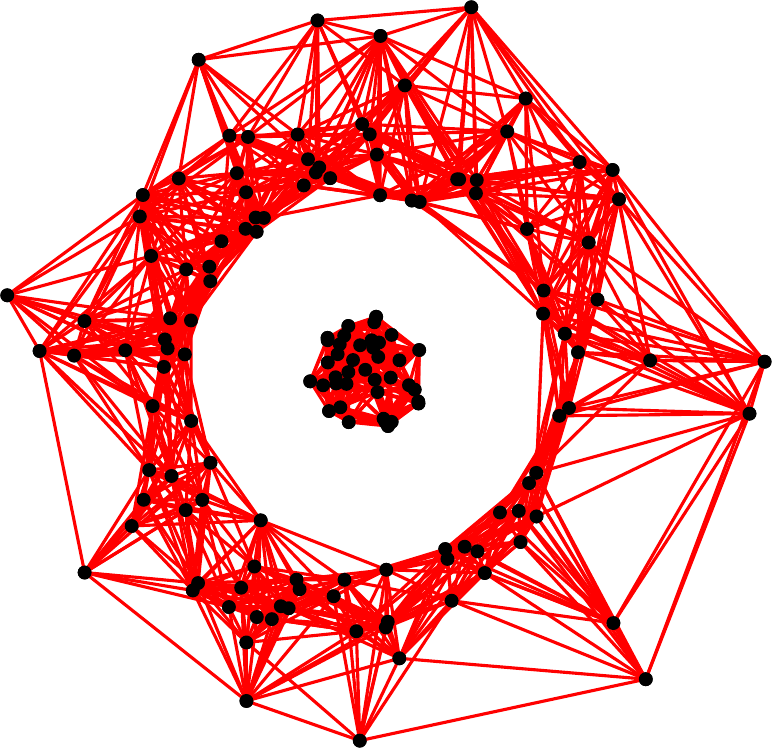}}
\end{center}
\caption{ \rm (a) Data sampled from a 2-dimensional Gaussian with a radial gap forming a non-compact manifold. (b) Standard fixed $\epsilon$-ball persistence diagram has no persistent region with 2 connected components and one non-contractible hole. (c) When $\epsilon=0.135$ the 1-homology is correct, but outliers are still not connected. (d) For $\epsilon=0.16$ the outliers are still not connected and the middle section is bridged.  For any size gap this bridge will happen in the limit of large data as the outliers become more spaced out.  (e) CkNN persistence in terms of $\delta$ shows the correct homology ($\beta_0=2$ and $\beta_1=1$) for $0.180<\delta<0.215$. (f) The CkNN construction for $\delta = 0.20$.}
\label{clusteringFig2}
\end{figure}

\begin{examp} \rm To form the data set in Fig.~\ref{clusteringFig3} we sampled 60 uniformly random points on a large annulus in the plane centered at $(-1,0)$ with radii in $[2/3,1]$ and another 60 uniformly random points on a much smaller annulus centered at $(1/5,0)$ with radii in $[1/5,3/10]$.  Together, these 120 points form a ``figure eight'' with a sparsely sampled large hole and a densely sampled small hole as shown in Fig.~\ref{clusteringFig3}(b)(d).  We then used the JavaPlex package \cite{Javaplex} to compute the $H^1$ persistence diagram for the VR complex based on the standard $\epsilon$-ball graph construction, shown in Fig.~\ref{clusteringFig3}(a).  Note that two major generators of $H^1$ are found along with some ``topological noise'', and the generators do not exist for a common $\epsilon$.

Since JavaPlex can build the VR complex persistence diagram for any distance matrix, we could easily compute the CkNN persistence diagram, shown in Fig.~\ref{clusteringFig3}(c), by using the `distance' matrix $d(x,y) = \frac{||x-y||}{\sqrt{||x-x_k|| \, ||y-y_k||}}$, and we used $k=10$ to form the matrix.  The CkNN captures both generators in a single graph, giving a multi-scale representation, whereas the standard $\epsilon$-ball graph captures only one scale at a time. Fig.~\ref{clusteringFig3}(d) shows the edges for $\delta = 0.1$, at which the graph captures the correct topology in all dimensions. Moreover, the CkNN construction is more efficient in this example, requiring only 934 edges to form a single connected component, whereas the $\epsilon$-ball construction requires 2306 edges.
\end{examp}

In the above examples, we saw that CkNN outperforms the standard $\epsilon$-ball approach, and kNN methods, for a given finite data set sampled nonuniformly from a compact manifold. However, in the large data limit, all of the above approaches converge to the correct homology, so the advantage of CkNN is only one of efficiency. Next we examine an example of non-compact data, where the CkNN construction converges to the correct topological information  in the large data limit, and the $\epsilon$-ball method cannot.

\begin{examp} \rm  \label{examp3} To form the data set in Fig.~\ref{clusteringFig2}(a) we sampled 150 points from a 2-dimensional Gaussian distribution and then removed the points of radius between $[1/4,3/4]$, leaving 120 points lying on two connected components with a single non-contractible hole.  In this case the standard $\epsilon$-ball persistence does not even capture the correct 0-homology for the manifold, due to the decreasing density near the outlying points, as shown in Fig.~\ref{clusteringFig2}(c-d).  Furthermore, since the true manifold is non-compact, there is no reason to expect the $\epsilon$-ball construction to converge to the correct topology even in the limit of large data.  In fact, as the amount of data is increased, the outlying points will become increasingly spaced out, leading to worse performance for the fixed $\epsilon$-ball construction, even for the $H^0$ homology.  In contrast, the CkNN construction is able to capture all the correct topological features for a large range of $\delta$ values, as shown in Fig.~\ref{clusteringFig2}(e-f).
\end{examp}

In Sections \ref{TDA} and \ref{uniqueconstruction} we prove that the CkNN is the unique graph construction that provides a consistent representation of the geometry of the underlying compact (or noncompact, under some technical assumptions) manifold in the limit of large data.  An immediate consequence is the consistency of the connected components.

\section{Manifold topology from graph topology}\label{TDA}

Our goal is to access the true topology of the underlying manifold using the Vietoris-Rips (VR) complex, which is an abstract simplicial complex on the finite data set. The VR complex is constructed inductively, first adding a triangle whenever all the faces are in the graph and then adding a higher order simplex whenever all the faces of the simplex are included.

\begin{Def} We say that a graph construction from a random sampling is \emph{topologically consistent} if the homology computed from the VR complex is isomorphic to the homology of the underlying manifold with probability approaching 1 as the number of data points goes to infinity.
\end{Def}

Topological consistency has been shown directly for the $\epsilon$-ball graph construction on compact manifolds without boundary \cite{VRcomplex,SmaleWeinberger,bobrowski2017topological}. However, Example \ref{examp3}  above shows why the $\epsilon$-ball construction cannot be guaranteed to be consistent for noncompact manifolds. In this section we delineate our notion of graph consistency for CkNN on Riemannian manifolds, compact and noncompact, based on spectral consistency of graph Laplacians with the Laplace-Beltrami operator.  Of course, this will establish geometric properties of the data that go beyond topological data analysis.

We first note that on a Riemannian manifold, the Laplace-Beltrami operator completely determines the Riemannian metric and thus the entire topology of a Riemannian manifold.  To make this connection explicit, in coordinates $x^1,...,x^m$ we can compute the Riemannian metric by
\[  g_{ij} = g_x(\nabla x^i,\nabla x^j) = \frac{1}{2}(\Delta(x^i x^j) - x^i \Delta x^j - x^j\Delta x^i). \]
The Riemannian metric $g$ is used to define the lengths of curves. In turn, the distance between points is defined as the infimum of these lengths over all curves between the points \cite{laplacianBook}.  The notion of distance determines the natural metric topology on the manifold.  In this formal way, perfect knowledge of the Laplace-Beltrami operator implies perfect knowledge of the topology.

Of course, this formal construction falls short of practical usage for two reasons. First, our discrete approximation does not yield perfect knowledge of the Laplace-Beltrami operator, but only convergence in a certain sense in the limit of large data. Second, extracting a given topological invariant from knowledge of the Laplace-Beltrami operator may be very difficult.

In this article, we are concerned mainly with addressing the first problem by showing the spectral convergence required to obtain certain types of topological invariants, namely cohomology.
In Sec.~\ref{background}, combined with a result of \cite{von2008consistency}, we prove pointwise and spectral convergence of the CkNN graph Laplacian to the Laplace-Beltrami operator, which we refer to as \emph{spectral consistency} of the graph Laplacian.
In particular, for a manifold without boundary or with a smooth boundary, convergence of the CkNN graph construction is guaranteed, and the manifold topology is uniquely determined.

The question remains, does spectral consistency to the Laplace-Beltrami operator imply topological consistency? The question can be visualized in the following diagram.

  \begin{center}\vspace{-15pt}
\[ \def\arraystretch{.3}  \begin{array}[c]{ccc}
\textup{Graph VR Homology }H_n(G)& \xdashrightarrow[\textup{consistency}]{\ \ \textup{topological} \ \ }
& \hspace{-10pt}\textup{Manifold Homology }H_n(\mathcal{M})\\ \\
\hspace{0pt}\scriptstyle{\textup{Graph Theory}}\left\uparrow\rule{0cm}{1cm}\right.   && \hspace{30pt}\left\uparrow\rule{0cm}{1cm}\right. \scriptstyle{\textup{Hodge Theory + SEC \cite{berry2018spectral} }}
\\ \\
\textup{Graph Laplacian }L_{\rm un}=\partial\partial^\top &\xrightarrow[\textup{consistency}]{\ \ \ \textup{spectral} \ \ \ }& \hspace{-20pt}\textup{Laplace-Beltrami }\Delta = \delta d
\end{array}\]
\end{center}

The left vertical arrow corresponds to the fact that the graph Laplacian $L_{un} = D-W$ can trivially be used to reconstruct the entire graph since the non-diagonal entries are simply the negative of the adjacency matrix.  Since the graph determines the entire VR complex, the graph Laplacian completely determines the VR homology of the graph.  This connection is explicitly spectral for the zero homology, since the zero-homology of a graph corresponds exactly to the zero-eigenspace of the graph Laplacian \cite{von2007tutorial}.

For higher homology, one would need to establish that the VR homology computed from the graph Laplacian agrees with the homology uniquely determined by the Laplace-Beltrami operator (which is uniquely determined by the converging CkNN graph Laplacians). We next explore the plausibility of this conjecture.

The right vertical arrow follows from the fact that from the Laplace-Beltrami operator, it is possible to reconstruct the metric on a Riemannian manifold which completely determines the homology of the manifold.  The Riemannian metric lifts to an inner product on forms $g(\omega,\nu)$ which defines the Hodge inner products
\[ \left<\omega,\nu\right> = \int_{\mathcal{M}} g(\omega,\nu) \, dV \]
on differential forms.  From the Hodge inner products, one defines the codifferential operators $\delta^k$ as the formal adjoint of the exterior derivative $d^k$ on $k$-forms.  Finally, the Laplace-de Rham operator on differential $k$-forms is defined by $\Delta^k = \delta^{k+1}d^k + d^{k-1}\delta^k$, and the Hodge theorem \cite{laplacianBook} states that the kernel of $\Delta^k$ is isomorphic to the $k$-th de Rham cohomology group and the $k$-th singular cohomology group, $\textup{ker}(\Delta^k) \cong H^k_{\textup{dR}}(\mathcal{M}) \cong H^k_{\textup{sing}}(\mathcal{M})$.  For closed manifolds without boundary, Poincare duality relates the homology to the cohomology $H_{n-k}(\mathcal{M}) \cong H^k(\mathcal{M})$. In general, cohomology is considered a stronger invariant.

A partial solution to this problem is given in \cite{berry2018spectral}, with a method of extracting cohomology information that relies on constructing estimators of Laplace-de Rham operators on $k$-forms.  The construction of these estimators  and their consistency relies on the spectral convergence results shown here.
 This construction is called the \emph{spectral exterior calculus} (SEC) since the Laplace-de Rham operators are approximated by Galerkin truncation on a spectral basis (as opposed to a finite element basis) constructed using the eigenfunction of the Laplace-Beltrami operator. While a finite element basis would require a simplicial complex, the SEC construction is valid for an abstract complex such as the VR complex, since it relies only on the spectral consistency of the graph Laplacian.  Estimators of the Laplace-de Rham operators on $1$-forms are constructed and are shown to converge spectrally, thus insuring that the kernel of these estimators yields the true cohomology in the limit of large data.  In \cite{berry2018spectral} a general strategy is outlined for lifting the results to the Laplace-de Rham operators on $k$-forms for $k>1$. The restriction to $1$-forms was simply due to the complexity of the explicit formulations.  The SEC together with the spectral convergence of the graph Laplacian to the Laplace-Beltrami operator completes the right vertical arrow by giving an explicit construction.

\vspace*{3pt}
An alternative approach that has not been fully explored yet would be to construct the Laplace-de Rham operators on $k$-forms directly from the VR-complex. This is an alternative to the SEC construction which represents these operators on a basis built from eigenfunction of the Laplace-Beltrami operator.  Such a construction would allow for the top horizontal arrow to be shown directly.  Establishing this connection requires defining a discrete analog of differential forms and discrete analogs of the higher-order Laplace-de Rham operators and then showing spectral convergence to the corresponding operators on the manifold.  One promising construction is the discrete exterior calculus \cite{DEC1,DEC2} but so far only very restricted consistency results have been shown \cite{SPEX,schulz2016convergence}.

\vspace*{3pt}
To summarize the above discussion, just as the discrete Laplacian completely determines the graph VR homology, the Laplace-Beltrami operator $\Delta$ completely determines the manifold homology.  In perfect analogy to the discrete case, the zero-homology of the manifold corresponds to the zero-eigenspace of the Laplace-Beltrami operator.  This connection is explicitly spectral, and we conjecture that the isomorphism generalizes to higher-order homology groups via the Laplace-de Rham operators on differential forms.

\vspace*{3pt}
Similar results exist for weighted graphs, which are less helpful to topological data analysis because they are not directly interpretable through VR calculations.  For example, \cite{diffusion} proves pointwise convergence on compact manifolds for any smooth sampling density, but their construction requires a weighted graph in general.  For graph Laplacian methods (including the results developed here), the dependence on the curvature and nearness to self-intersection appears in the bias term of the estimator as shown in \cite{diffusion,heinthesis}.
 Using more complicated weighted graph constructions, recent results show that for a large class of non-compact manifolds \cite{heinthesis} with smooth sampling densities that are allowed to be arbitrarily close to zero \cite{BH14}, the graph Laplacian can converge pointwise to the Laplace-Beltrami operator.  These results require weighted graph constructions due to several normalizations which are meant to remove the influence of the sampling density on the limiting operator.
\vspace*{-8pt}
\section{The unique consistent unweighted graph construction}\label{uniqueconstruction}

In this section we show that the continuous k-nearest neighbors (CkNN) construction is the unique unweighted graph construction that yields a consistent unweighted graph Laplacian for any smooth sampling density on manifolds of the class defined in \cite{heinthesis}, including many non-compact manifolds.

Consider a data set $\{x_i\}_{i=1}^N$ of independent samples from a probability distribution $q(x)$ that is supported on a $m$-dimensional manifold $\mathcal{M}$ embedded in Euclidean space.  For a smooth function $\rho(x)$ on $\mathcal{M}$, we will consider the CkNN graph construction, where two data points $x_i$ and $x_j$ are connected by an edge if
\begin{equation}\label{multiscale} d(x_i,x_j) < \delta \sqrt{\rho(x_i)\rho(x_j)}. \end{equation}
This construction leads to the $N\times N$ adjacency matrix $W$ whose $ij$th entry is $1$ if $x_i$ and $x_j$ have an edge in common, and $0$ otherwise. Let $D$ be the diagonal matrix of row sums of $W$, and define the ``unnormalized'' graph Laplacian $L_{\rm un} = D-W$.
In Sec.~\ref{background} we show that for an appropriate factor $c$ depending only on $\delta$ and $N$, in the limit of large data,  $c^{-1}L_{\rm un}$ converges both pointwise and spectrally to the operator defined by
\begin{equation}\label{LunOperator} \mathcal{L}_{q,\rho}f \equiv q \rho^{m+2} \left(\Delta f - \nabla \log\left(q^2\rho^{m+2}\right) \cdot \nabla f \right),  \end{equation}
where $\Delta$ is the positive definite Laplace-Beltrami operator and $\nabla$ is the gradient, both with respect to the Riemannian metric inherited from the ambient space.  In fact, pointwise convergence follows from a theorem of \cite{Ting2010}, and the pointwise bias and a high probability estimate of the variance was first computed in \cite{BH14}. Both of these results follow for a larger class of kernels than the one defined in (\ref{multiscale}).

Although the operator in \eqref{LunOperator} appears complicated, we now show that it is still a Laplace-Beltrami operator on the same manifold $\mathcal{M}$, but with respect to a different metric. A {\it conformal} change of metric corresponds to a new Riemannian metric $\tilde g \equiv \varphi g$, where $\varphi(x)>0$, and which has Laplace-Beltrami operator
\begin{equation}\label{Deltilde} \Delta_{\tilde g}f = \frac{1}{\varphi}(\Delta f - (m-2)\nabla \log \sqrt{\varphi} \cdot \nabla f).  \end{equation}
For expressions \eqref{LunOperator} and \eqref{Deltilde} to match, the function $\varphi$ must satisfy
\begin{equation}\label{conformal}\frac{1}{\varphi} = q\rho^{m+2} \ \ \ \ \ {\rm and}\ \ \ \ \ \varphi^{\frac{m-2}{2}} = q^2\rho^{m+2}.\end{equation}
Eliminating $\varphi$ in the two equations results in $q^{-(m+2)} = \rho^{m(m+2)}$, which implies
 $\rho \equiv q^{-\frac{1}{m}}$ as the only choice that makes the operator \eqref{LunOperator} equal to a Laplace-Beltrami operator $\Delta_{\tilde g}$. The new metric is $\tilde g = q^{2/m} g$.  Computing the volume form $d\tilde V$ of the new metric ${\tilde g}$ we find
\begin{equation}\label{volform} d\tilde V = \sqrt{|\tilde g|} = \sqrt{|q^{2/m}g|} = q \sqrt{|g|} = q\, dV \end{equation}
which is precisely the sampling measure.  Moreover, the volume form $d\tilde V$ is exactly consistent with the discrete inner product
\[  \mathbb{E}\left[\vec f \cdot \vec f \right] = \mathbb{E}\left[\sum_{i=1}^N f(x_i)^2\right] = N\int f(x)^2q(x)\, dV = N\left<f,f\right>_{d\tilde V}. \]
This consistency is crucial since the discrete spectrum of $L_{\rm un}$ are the minimizers of the functional
\[ \Lambda(f) = \frac{\vec f \,^\top c^{-1}L_{\rm un} \vec f}{\vec f \,^\top \vec f} \to_{N\to\infty} \frac{\left<f,\Delta_{\tilde g}f \right>_{d\tilde V}}{\left<f,f\right>_{d\tilde V}} \]
where $c$ is a scalar depending only on $\delta$ and $N$ (see Theorem \ref{pointwiseLun} in Sec.~\ref{background}).  If the Hilbert space norm implied by $\vec f \,^\top \vec f$ were not the volume form of the Riemannian metric $\tilde g$, then the eigenvectors of $L_{\rm un}$ would not minimize the correct functional in the limit of large data.  This shows why it is important that the Hilbert space norm is consistent with Laplace-Beltrami operator estimated by $L_{\rm un}$.

Another advantage of the geometry $\tilde g$ concerns the spectral convergence of $L_{\rm un}$ to $\mathcal{L}_{q,\rho}$ shown in Theorem \ref{spectralconv} in Sec.~\ref{background}, which requires the spectrum to be discrete.  Assuming a smooth boundary, the Laplace-Beltrami operator on any manifold with finite volume will have a discrete spectrum \cite{cianchi2011}.  This insures that spectral convergence always holds for the Riemannian metric $\tilde g = q^{2/m}g$ since the volume form is $d\tilde V = q dV$, and therefore the volume of the manifold is exactly
\[ \textup{vol}_{\tilde g}(\mathcal{M}) = \int_{\mathcal{M}} d\tilde V = \int_{\mathcal{M}} q \, dV = 1. \]
Since all geometries on a manifold have the same topology, this shows once again that the metric $\tilde g$ is completely natural for topological investigations since spectral convergence is guaranteed, and spectral convergence is crucial to determining the homology.

The fact that $\rho=q^{-1/m}$ is the unique solution to \eqref{conformal} along with the spectral consistency implies the following result.
\begin{thm}[Unique consistent geometry]\label{consgeom} Consider data sampled from a compact Riemannian manifold satisfying Assumption 1 (see Section \ref{background}).  Among unweighted graph constructions \eqref{multiscale}, $\rho = q^{-1/m}$ is the unique choice which yields a consistent geometry in the sense that the unnormalized graph Laplacian converges spectrally to a Laplace-Beltrami operator.  Thus, the CkNN graph construction yields a consistent clustering for any density $q$.
\end{thm}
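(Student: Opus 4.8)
\noindent\emph{Proof strategy.} The plan is to read ``$c^{-1}L_{\rm un}$ converges spectrally to a Laplace-Beltrami operator'' as the conjunction of two requirements on $\rho$: (i) the limiting operator $\mathcal{L}_{q,\rho}$ of \eqref{LunOperator}, whose existence and spectral relevance I take from Theorems \ref{pointwiseLun} and \ref{spectralconv}, coincides with the Laplace-Beltrami operator $\Delta_h$ of \emph{some} Riemannian metric $h$ on $\mathcal{M}$; and (ii) that $\Delta_h$ has purely discrete spectrum, so the spectral-convergence machinery of Theorem \ref{spectralconv} genuinely applies. I would first establish existence of an admissible $\rho$, then its uniqueness, and finally deduce consistent clustering.

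\emph{Existence and hypotheses.} For $\rho = q^{-1/m}$ the identities \eqref{Deltilde}--\eqref{conformal} already exhibit $\mathcal{L}_{q,\rho} = \Delta_{\tilde g}$ with $\tilde g = q^{2/m}g$, and \eqref{volform} gives $d\tilde V = q\,dV$, so $\mathrm{vol}_{\tilde g}(\mathcal{M}) = \int_{\mathcal{M}} q\,dV = 1 < \infty$. Since $\mathcal{M}$ has a smooth boundary or no boundary, finite volume forces $\Delta_{\tilde g}$ to have purely discrete spectrum \cite{cianchi2011}, which is exactly the standing hypothesis used by Theorem \ref{spectralconv}; moreover $N^{-1}\vec f^\top \vec f \to \langle f,f\rangle_{L^2(d\tilde V)}$, so the limiting Rayleigh quotients are precisely those of $\Delta_{\tilde g}$. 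Hence $\rho = q^{-1/m}$ does yield spectral convergence to a Laplace-Beltrami operator.

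\emph{Uniqueness.} Suppose $\rho$ is a smooth positive function with $\mathcal{L}_{q,\rho} = \Delta_h$ for some Riemannian metric $h$. I would compare principal symbols: by \eqref{LunOperator} that of $\mathcal{L}_{q,\rho}$ at a covector $\xi$ is $q\,\rho^{m+2}\, g^{ij}\xi_i\xi_j$, while that of $\Delta_h$ is $h^{ij}\xi_i\xi_j$, and equality for all $\xi$ forces $h = (q\rho^{m+2})^{-1}g$ --- so $h$ must be conformal to $g$ with conformal factor $\varphi = (q\rho^{m+2})^{-1}$, which is the first equation of \eqref{conformal}. Inserting this $\varphi$ into \eqref{Deltilde} and matching the first-order (drift) parts of $\Delta_h$ and $\mathcal{L}_{q,\rho}$ yields, up to a multiplicative constant that only rescales $\delta$, the second equation $\varphi^{(m-2)/2} = q^2\rho^{m+2}$; eliminating $\varphi$ then gives $q^{-(m+2)} = \rho^{m(m+2)}$, i.e.\ $\rho = q^{-1/m}$. (When $m = 2$ the drift term of \eqref{Deltilde} vanishes identically, so the matching instead forces $\nabla\log(q^2\rho^{4}) \equiv 0$, hence $q^2\rho^{4}$ constant on each component and again $\rho \propto q^{-1/2}$.) Thus $\rho = q^{-1/m}$, unique up to the constant absorbed into $\delta$, is the only construction \eqref{multiscale} whose unnormalized graph Laplacian has a Laplace-Beltrami limit.

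\emph{Clustering, and the main obstacle.} With $\rho = q^{-1/m}$ fixed, consistent clustering follows from the zero-eigenspace dictionary of Section \ref{TDA}: $\dim\ker L_{\rm un}$ equals the number of connected components of the CkNN graph \cite{von2007tutorial}, and $\dim\ker\Delta_{\tilde g}$ equals the number of connected components of $\mathcal{M}$ (the constants lie in $L^2(d\tilde V)$ because $\mathrm{vol}_{\tilde g}(\mathcal{M}) < \infty$). Since Theorem \ref{spectralconv} supplies convergence of the low-lying spectrum together with a spectral gap above the zero eigenvalue, for all sufficiently large $N$ the operator $c^{-1}L_{\rm un}$ has exactly $\dim\ker\Delta_{\tilde g}$ eigenvalues in a fixed neighborhood of $0$ and the rest bounded away, so the CkNN graph recovers the correct number of components; by the uniqueness step it is the only construction \eqref{multiscale} for which this argument is available. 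I expect the uniqueness step to be the crux --- in particular the implication ``Laplace-Beltrami for some metric $\Rightarrow$ conformal to $g$'' via the principal-symbol comparison, together with the $m = 2$ degeneracy --- alongside the analytic bookkeeping of confirming that the conditions inherited from \cite{von2008consistency} in Theorem \ref{spectralconv}, shown superfluous in Section \ref{background}, indeed hold for the non-compact manifolds in the class of \cite{heinthesis} once the geometry $\tilde g$ has been identified.
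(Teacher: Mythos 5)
Your proposal is correct and follows essentially the same route as the paper: identify the limiting operator $\mathcal{L}_{q,\rho}$ of \eqref{LunOperator}, match it to a conformal Laplace--Beltrami operator via \eqref{Deltilde}--\eqref{conformal}, eliminate $\varphi$ to obtain $\rho = q^{-1/m}$ (up to a constant absorbed into $\delta$), and then use the finite volume of $\tilde g = q^{2/m}g$ (hence discrete spectrum) together with Theorem \ref{spectralconv} to get spectral convergence and the clustering consequence via the zero-eigenspace correspondence. Your principal-symbol argument for why the target metric must be conformal to $g$, and the separate treatment of the $m=2$ degeneracy, only make explicit points the paper leaves implicit.
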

Based on the results in Section \ref{background}, we conjecture that spectral convergence also holds for non-compact manifolds.  This would imply that CkNN is the unique consistent graph construction for clustering, since for any other $\rho$, there will exist densities $q$ where $\mathcal{M}$ has infinite volume with respect to $q^{\frac{4}{m-2}}\rho^{\frac{2m+4}{m-2}}dV$, precluding consistency.  Based on the discussion in the previous section, and in particular the fact that the Laplace-Beltrami operator determines the entire cohomology of the manifold, we propose the following conjecture:
\begin{conj}[Unique consistent topology] CkNN is the unique graph construction \eqref{multiscale} with an associated VR complex that is topologically consistent in the limit of large data.
\end{conj}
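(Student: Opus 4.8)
The plan is to complete the top horizontal arrow of the diagram in Section~\ref{TDA} by promoting the spectral consistency of $L_{\rm un}$ (the bottom arrow, Theorem~\ref{consgeom}) from $0$-forms to $k$-forms for all $k$, and then to harvest the uniqueness half of the conjecture directly from Theorem~\ref{consgeom}. Indeed, the uniqueness statement is almost immediate: if any construction of the form \eqref{multiscale} is topologically consistent, then in particular its $0$-th Betti number converges, i.e.\ the number of connected components of the graph converges to that of $\mathcal{M}$; but that is exactly consistent clustering, and Theorem~\ref{consgeom} identifies $\rho=q^{-1/m}$ as the only choice achieving it. So the real content is to prove that CkNN \emph{is} topologically consistent.

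For existence I would take the spectral route, since it reuses the machinery of Section~\ref{background}. Equip the Vietoris--Rips complex of the CkNN graph with a discrete exterior calculus \cite{DEC1,DEC2}: orient the simplices, let $C^k$ be the $k$-cochains, let $d^k\colon C^k\to C^{k+1}$ be the simplicial coboundary, and set $\mathbf{L}^k_{\rm un}=(d^k)^\top d^k + d^{k-1}(d^{k-1})^\top$, so $\mathbf{L}^0_{\rm un}=L_{\rm un}$. The combinatorial Hodge theorem gives $\dim\ker\mathbf{L}^k_{\rm un}=\dim H^k(\text{VR complex})=\beta_k$ of the graph for \emph{any} inner products on the $C^k$, so no numerical thresholding is needed and it suffices to control the bottom of the spectrum of $\mathbf{L}^k_{\rm un}$. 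I would then (i) fix the inner products on $C^k$ that are consistent with the Hodge inner products of $\tilde g=q^{2/m}g$ — using Whitney/de Rham maps between cochains and forms, exactly analogous to the proof in Section~\ref{uniqueconstruction} that $\vec f\,^\top\vec f$ is consistent with $\langle\cdot,\cdot\rangle_{d\tilde V}$; (ii) extend the variable-bandwidth bias--variance computation of Section~\ref{background} and \cite{BH14} from functions to $k$-forms to obtain pointwise convergence $c^{-1}\mathbf{L}^k_{\rm un}\to\Delta^k_{\tilde g}$; and (iii) upgrade pointwise to spectral convergence via the operator-perturbation framework of \cite{von2008consistency}, which applies because $(\mathcal{M},\tilde g)$ has finite volume ($\mathrm{vol}_{\tilde g}(\mathcal{M})=1$) and a smooth boundary, so $\Delta^k_{\tilde g}$ has discrete spectrum \cite{cianchi2011,laplacianBook}. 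Hodge theory on the manifold gives $\dim\ker\Delta^k_{\tilde g}=\dim H^k_{\rm dR}(\mathcal{M})=\dim H^k_{\rm sing}(\mathcal{M})$, and Poincar\'e duality converts this to Betti numbers; spectral convergence then forces $\beta_k(\text{VR complex})\to\beta_k(\mathcal{M})$ for every $k$, which is topological consistency.

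An alternative, purely topological route for existence would avoid differential forms altogether: show that, with high probability, the CkNN VR complex is homotopy equivalent to $\mathcal{M}$ by transporting the $\epsilon$-ball results of \cite{VRcomplex,SmaleWeinberger} to the conformally rescaled metric $\tilde g=q^{2/m}g$, under which CkNN is asymptotically a fixed-radius construction. This requires uniform reach and curvature control of $(\mathcal{M},\tilde g)$ together with an interleaving (in the style of the Latschev/Hausmann VR-recovery theorems) between the random $\tilde g$-metric recovered from finite data and the true $\tilde g$. I expect this route to be messier on non-compact $\mathcal{M}$, but it is a useful fallback where the DEC comparison is hard to make quantitative.

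The main obstacle is clearly steps (ii)--(iii) for $k\ge 1$. There is currently no pointwise or spectral convergence theorem for combinatorial Laplace--de Rham operators on \emph{Vietoris--Rips} complexes, which carry far more high-dimensional simplices than a geometric triangulation, so it is not even obvious a priori that $\mathbf{L}^k_{\rm un}$ behaves like a discretization of $\Delta^k_{\tilde g}$; the Whitney-form comparison that is standard for fine triangulations must be re-derived for complexes built on random points, including the $k$-form analogue of the surprising gap between pointwise-optimal and spectrally-optimal bandwidths found in Section~\ref{background}. Compounding this, on non-compact $\mathcal{M}$ one must determine which metrics keep the spectral variance of the $k$-form estimator finite (the higher-form version of the accessibility condition on geometries from Section~\ref{background}), and one must check that the scale implicit in CkNN is simultaneously large enough for full connectivity and small enough for the VR complex to be homotopy equivalent to $\mathcal{M}$ — precisely the tension that the multi-scale geometry $\tilde g$ is built to resolve, but whose resolution for higher homology remains to be established.
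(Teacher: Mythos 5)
The statement you are addressing is a \emph{conjecture} in the paper, and the paper deliberately does not prove it: the authors state explicitly that completing the proof would require constructing discrete analogues of the Laplace--de Rham operators $\Delta^k$ and proving their spectral convergence, and that for the candidate construction (discrete exterior calculus \cite{DEC1,DEC2}) ``no consistency results have been shown yet.'' Your proposal follows essentially the same envisioned route as the paper's diagram in Section~\ref{TDA} --- combinatorial Hodge theory on the VR complex, inner products matched to $\tilde g=q^{2/m}g$, bias--variance analysis for $k$-form estimators, and the framework of \cite{von2008consistency} --- and you candidly identify steps (ii)--(iii) as unproven. But those steps are precisely the open content of the conjecture, so what you have written is a research program, not a proof; as it stands there is a genuine gap, and it is the same gap the paper itself points to.

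Two further cautions. First, even granting spectral convergence $c^{-1}\mathbf{L}^k_{\rm un}\to\Delta^k_{\tilde g}$, topological consistency requires the \emph{dimension of the exact kernel} of $\mathbf{L}^k_{\rm un}$ (which equals $\beta_k$ of the VR complex by combinatorial Hodge theory) to stabilize at $\dim\ker\Delta^k_{\tilde g}$; convergence of eigenvalues only forces the first $\beta_k(\mathcal{M})$ discrete eigenvalues to tend to zero, not to vanish exactly, so an additional argument (a uniform spectral gap plus a mechanism forcing near-harmonic cochains to be genuinely harmonic, or a direct homological/interleaving argument as in your fallback route via \cite{VRcomplex,SmaleWeinberger}) is needed to close the top arrow even after (ii)--(iii). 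Second, your uniqueness argument is acceptable only insofar as it inherits Theorem~\ref{consgeom}: you reduce ``topologically consistent $\Rightarrow$ $\beta_0$-consistent $\Rightarrow$ consistent clustering'' and then invoke the theorem's uniqueness claim; note that the force of that claim on non-compact manifolds comes from the finiteness of the spectral variance and discreteness of the spectrum for $\rho=q^{-1/m}$ (Theorems~\ref{spec1} and \ref{spectralconv}), not merely from the conformal-metric computation, so if you want uniqueness of \emph{topological} consistency you must argue that alternative $\rho$ actually fail to recover $\beta_0$ (e.g., via divergent variance or non-discrete spectrum), not just that they fail to yield a Laplace--Beltrami limit.
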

As mentioned in the previous section, completing the proof of this conjecture would require explicit estimation of the Laplace-de Rham operators, $\Delta^k$, and corresponding spectral convergence proofs.  In this paper we establish this fact for the Laplace-Beltrami operator, $\Delta =\Delta^0$, which is the key first step to supporting the conjecture.  The spectral exterior calculus (SEC) \cite{berry2018spectral} provides a construction which lifts this results to general $\Delta^k$, and explicitly proves spectral convergence for $\Delta^1$ which is the most challenging theoretical barrier.  The only remaining challenge to lifting the result to $\Delta^k$ for all $k$ is simply the problem of formulating the explicit construction.

The theorem and conjecture above have practical ramifications since (as shown in the previous section) a consistent graph construction will have the same limiting topology as the underlying manifold.  In Examples \ref{ex2} and \ref{ex3} we will empirically illustrate the consistency of the CkNN choice $\rho=q^{-1/m}$ as well as the failure of alternative constructions.

As a side note, we mention that the unnormalized graph Laplacian is not the only graph Laplacian. With the same notation as above, the ``normalized'', or ``random-walk'' graph Laplacian is often defined as $L_{\rm rw} = I - D^{-1}W = D^{-1}L_{\rm un}$, and has the limiting operator
\[ c^{-1}L_{\rm rw} \equiv c^{-1}D^{-1}L_{\rm un}  \to_{N\to\infty}  \rho^{2} \left(\Delta - \nabla \log\left(q^2\rho^{m+2}\right) \cdot \nabla \right) = q^{\frac{4}{m-2}}\rho^{\frac{4m}{m-2}} \Delta_{\tilde g} \]
(see for example \cite{Ting2010,BH14}; the constant $c$ is different from the unnormalized case).
Note that again $\rho = q^{-1/m}$ is the unique choice leading to a Laplace-Beltrami operator.  This choice implies that to leading order $D\vec f \approx q\rho^m f = f$, so the corresponding Hilbert space has norm $\vec f\,^\top D \vec f \to_{N\to\infty} \left<f,f\right>_{d\tilde V}$.  This implies spectral consistency since $L_{\rm rw}\vec f = \lambda \vec f$ is equivalent to $L_{\rm un}\vec f = \lambda D\vec f$, which is related to the functional
\[ \Lambda(f) = \frac{\vec f \,^\top c^{-1}L_{\rm un} \vec f}{\vec f \,^\top D \vec f} \to_{N\to\infty} \frac{\left<f,\Delta_{\tilde g}f \right>_{d\tilde V}}{\left<f,f\right>_{d\tilde V}}. \]
Therefore, for the choice $\rho = q^{-1/m}$, both the unnormalized and normalized graph Laplacians are consistent with the same underlying Laplace-Beltrami operator with respect to the metric $\tilde g$.

We emphasize that we are not using either graph Laplacian directly for computations. Instead, we are using the convergence of the graph Laplacian to show convergence of the graph connected components to those of the underlying manifold.  Since this consistency holds for an unweighted graph construction, we can make use of more computationally efficient methods to find the topology of the graph, such as depth-first search to compute the zero-level homology.  More generally we compute the higher order homology from the VR complex of the graph, which our conjecture suggests should converge to that of the underlying manifold.  A wider class of geometries are accessible via weighted graph constructions (see for example \cite{diffusion,BH14,localk}), but fast algorithms for analyzing graph topology only apply to unweighted graphs.

\section{Further applications to topological data analysis} \label{spectralclustering}

The fundamental idea of extracting topology from a point cloud by building unweighted graphs relies on determining what is considered an edge in the graph as a function of a parameter, and then considering the graph as a VR complex.
In the $\epsilon$-ball, kNN  and CkNN procedures, edges are added as a parameter is increased, from no edges for extremely small values of the parameter to full connectivity for sufficiently large values. From this point of view, the procedures differ mainly by the order in which the edges are added.

Classical persistence orders the addition of possible edges by $||x-y||$, whereas the CkNN orders the edges by $\frac{||x-y||}{\sqrt{||x-x_k|| \, ||y-y_k||}}$.  More generally, a multi-scale graph construction with bandwidth function $\rho(x)$ orders the edges by $\frac{||x-y||}{\sqrt{\rho(x)\rho(y)}}$. Our claim is that CkNN gives an order that allows graph consistency to be proved. In addition, we have seen in Figs. \ref{clusteringFig3} and \ref{clusteringFig2} that the CkNN ordering is more efficient. In this section we show further examples illustrating this fact.
We will quantify the persistence or stability of a feature by the percentage of edges (out of the total $N(N-1)/2$ possible in the given ordering) for which the feature persists.  This measure is an objective way to compare different orderings of the possible edges.

 The consistent homology approach differs from the persistent homology approach by using a single graph construction to simultaneously represent all the topological features of the underlying manifold.  This requires selecting the parameter $\delta$ which determines the CkNN graph.  The asymptotically optimal choice of $\delta$ in terms of the number of data points is derived in Sec.~\ref{background}, however the constants depend on the geometry of the unknown manifold. There are many existing methods of tuning $\delta$ for learning the geometry of data \cite{epsilontuning,BH14}.  As a practical method of tuning $\delta$ for topological data analysis, we can use the classical persistence diagram to find the longest range of $\delta$ values such that all of the homological generators do not change.  In a sense we are using the classical persistence diagram in reverse, looking for a single value of $\delta$ where all the homology classes are stable.  In Examples \ref{ex2} and \ref{ex3} below we validate this approach by showing that the percentage of edges which capture the true homology is longest when using ordering defined by $\rho=q^{-1/m}$ which is equivalent to the CkNN.

\subsection{A fast graph-based clustering algorithm}\label{clustering}

We consider the problem of identifying the connected components of a manifold from a data set using the connected components of the CkNN graph construction.  While clustering connected components is generally less difficult than segmenting a connected domain, outliers can easily confuse many clustering algorithms.  Many rigorous methods, including any results based on existing kernel methods \cite{heinHighDensity2,ZP,von2008consistency,heinCuts1}, require the sampling density to be bounded away from zero.  In other words, rigorous clustering algorithms require the underlying manifold to be compact.  A common work-around for this problem is the estimate the density of the data points and then remove points of low density. However, this leaves the removed points unclustered \cite{highdensity1rigourous,highdensity2,highdensity3,heinHighDensity1}.  We have shown that the CkNN method is applicable to a wide class of non-compact manifolds, and in particular the connected components of the CkNN graph will converge to the connected components of the underlying manifold.

Here we use the CkNN to put an ordering on the potential edges of the graph. While the full persistent homology of a large data set can be computationally very expensive, the 0-homology is easily accessible using fast graph theoretic algorithms.  First, the connected components of a graph can be quickly identified by the depth-first search algorithm.  Second, unlike the other homology classes, the 0-homology is monotonic; as edges are added, the number of connected components can only decrease or stay the same.  This monotonicity allows us to easily identify the entire 0-homology $\delta$-sequence by only finding the transitions, meaning the numbers of edges where the $0$-th Betti number changes.  We can quickly identify these transitions using a binary search algorithm as outlined below.

\noindent\makebox[\linewidth]{\rule{\textwidth}{0.8pt}}
{{\bf Algorithm 1. }  Fast Binary Search Clustering.}
\newline
\vspace{-15pt}

\noindent\makebox[\linewidth]{\rule{\textwidth}{0.8pt}}

\vspace{0pt}
\textbf{Inputs:} Ordering of the $N(N-1)/2$ possible edges, number of clusters $C>1$.

\textbf{Outputs:} Number of edges, $L$, such that the graph has $C$ components and adding an edge yields $C-1$ components.

\vspace{3pt}
\begin{enumerate}	
\item Initialize the endpoints $L=0$ and $R=N(N-1)/2$
\item while $L < R-1$
	\begin{enumerate}
	\item Set $M = \textup{floor}((L+R)/2)$	
	\item Build a graph using the first $M$ edges from the ordering
	\item Use depth-first search to find the number of components $\tilde C$
	\item If $\tilde C \geq C$ set $L=M$ otherwise set $R=M$
	\end{enumerate}
\item return $L$.
\end{enumerate}\vspace{-10pt}
\noindent\makebox[\linewidth]{\rule{\textwidth}{0.8pt}}

When the goal is to find all of the transition points, Algorithm 1 can easily be improved by storing all the numbers of clusters from previous computations and using these to find the best available left and right endpoints for the binary search.

\begin{examp} \rm In Fig.~\ref{spiralsFig}, we illustrate the use of Algorithm 1 on a point set consisting of the union of three spiral-shaped subsets with nonuniform sampling. In fact, the density of points falls off exponentially in the radial direction. Fig.~\ref{spiralsFig}(a) shows the original set, and panel (b) shows the number of components as a function of the proportion of edges. When the number of edges is between one and two percent of the possible pairs of points, the persistence diagram in (b) detects three components, shown in (c) along with the edges needed. A three-dimensional version of three spiral-shaped subsets is depicted in Fig.~\ref{spiralsFig}(d)-(f), with similar results.

\begin{figure}
\begin{center}
\subfigure[]{\includegraphics[width=.3\linewidth]{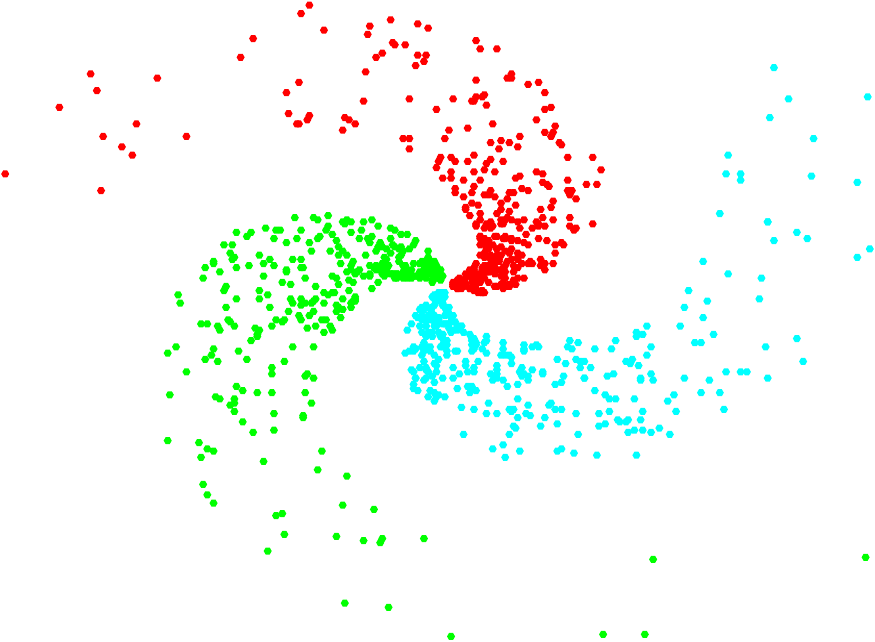}}\hspace*{.02\linewidth}
\subfigure[]{\includegraphics[width=.26\linewidth]{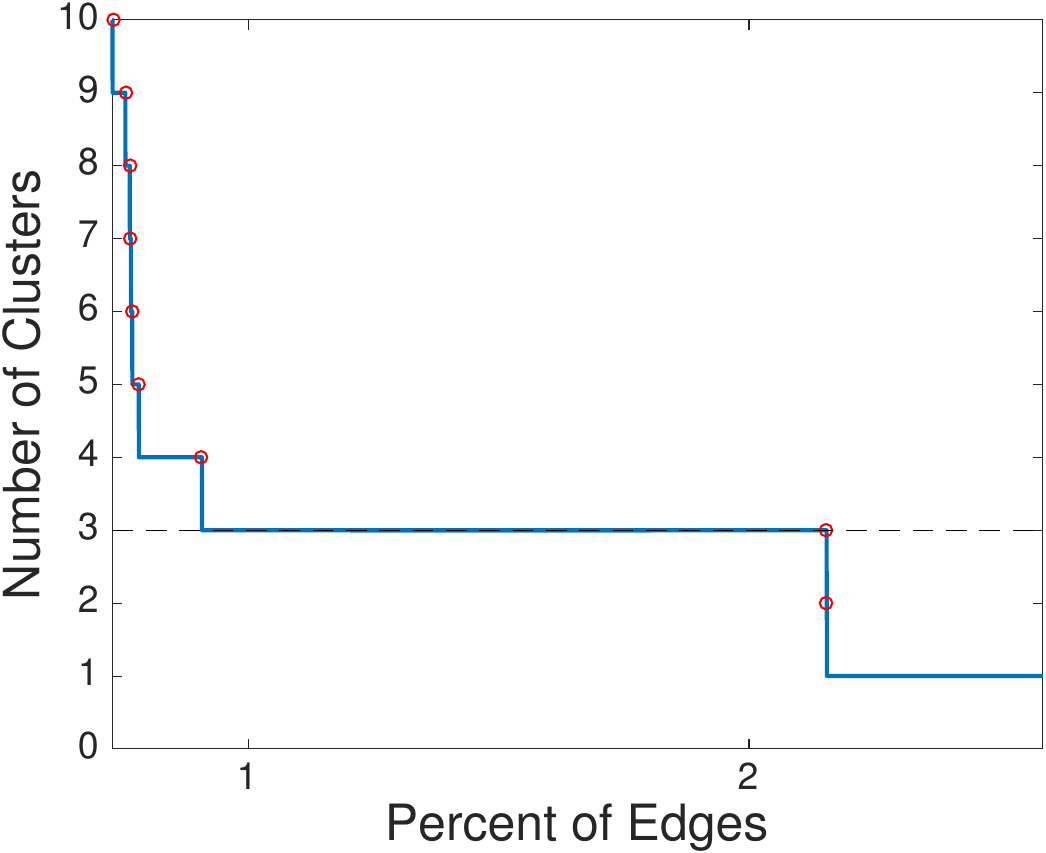}}\hspace*{.02\linewidth}
\subfigure[]{\includegraphics[width=.24\linewidth]{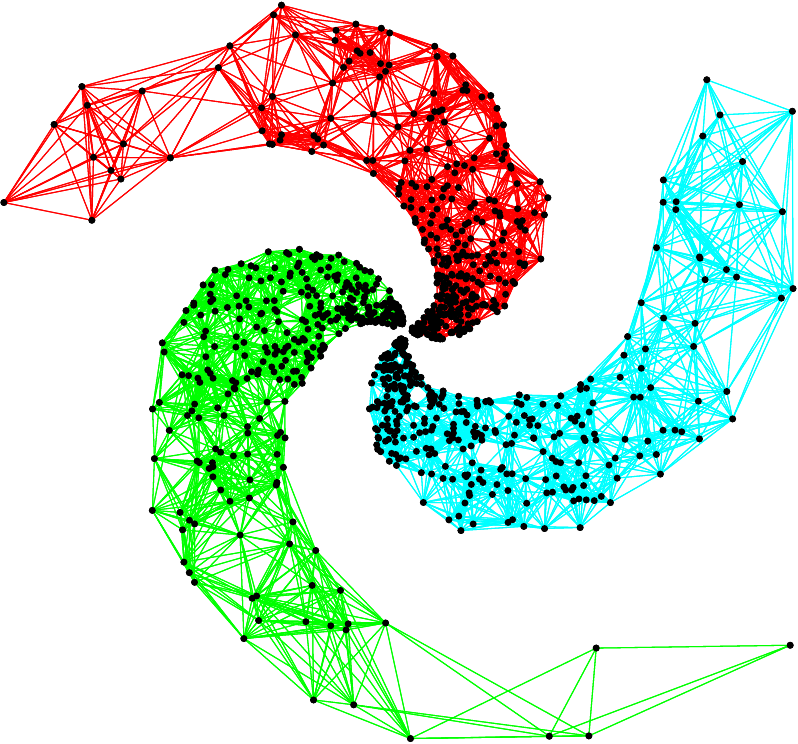}}
\subfigure[]{\includegraphics[width=.3\linewidth]{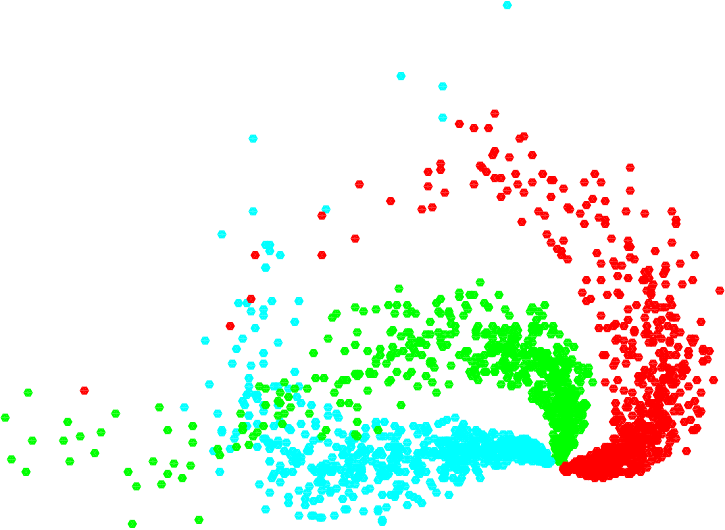}} \hspace*{.043\linewidth}
\subfigure[]{\includegraphics[width=.26\linewidth]{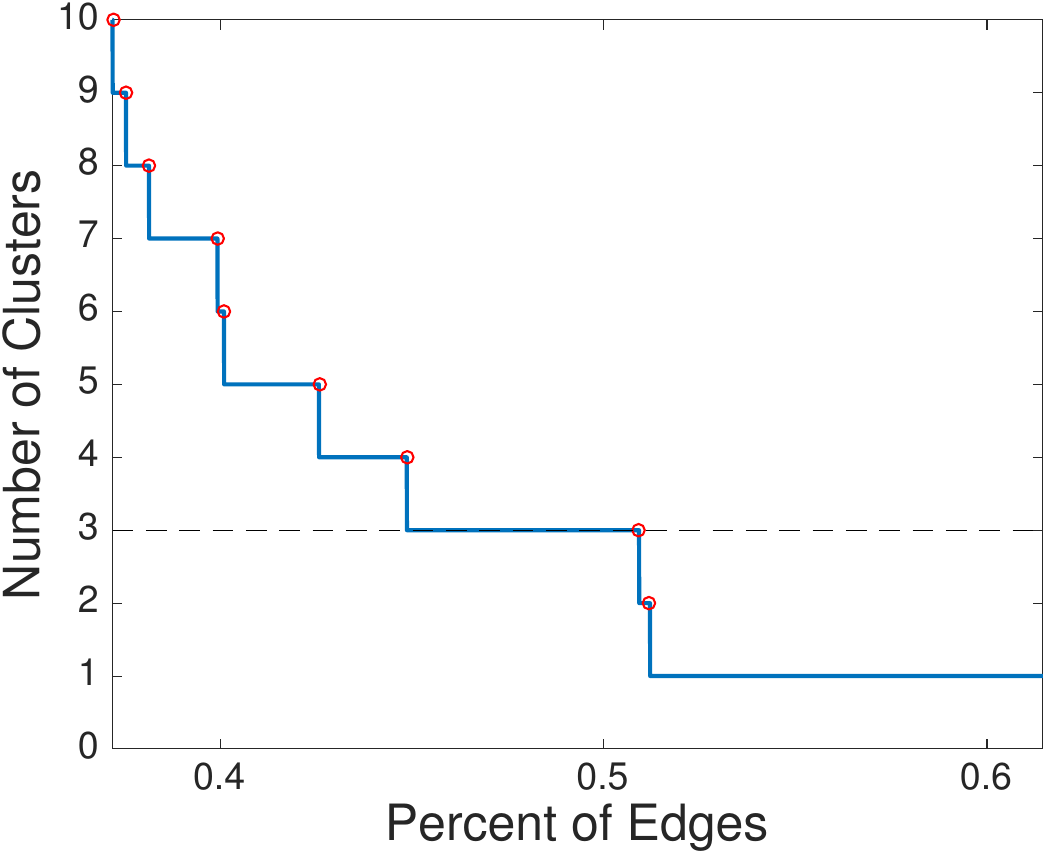}}\hspace*{.02\linewidth}
\subfigure[]{\includegraphics[width=.27\linewidth]{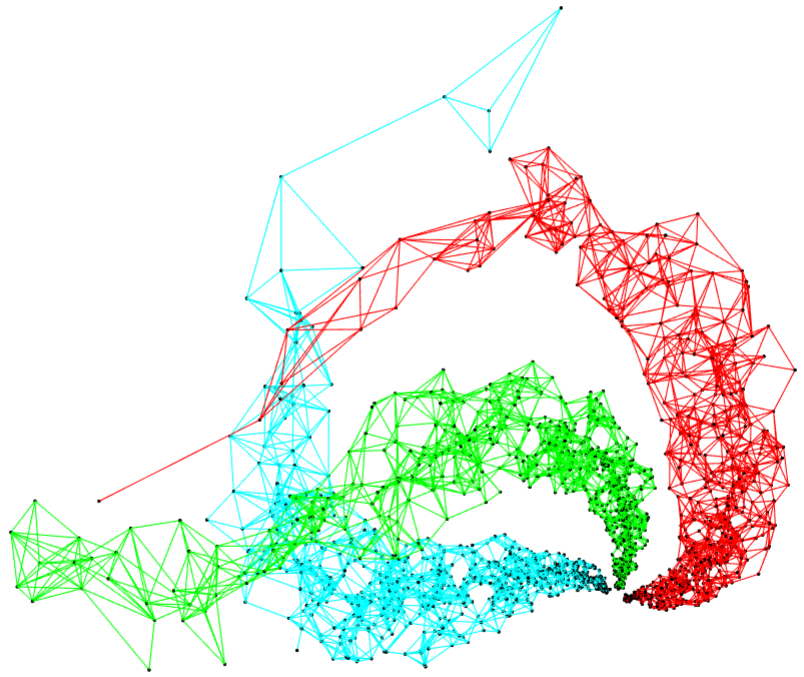}}
\end{center}
\caption{ \rm The fast clustering algorithm applied to a set of three spirals with densities which decay exponentially along the length of the spiral.  (a) 1000 total points sampled from three 2-dimensional spirals in the plane. (b) Diagram shows number of clusters as a function of the percentage of possible edges added to the graph (a unitless measure of persistence) (c) Graph corresponding to the maximum number of edges with 3 components, colored according to identification by depth-first-search.  (d) 2000 total points sampled from three 3-dimensional spirals in $\mathbb{R}^3$.  (e) Large interval identifies the correct number of clusters. (f) Graph with maximum number of edges having 3 components, colored by clustering algorithm.\label{spiralsFig}}
\end{figure}

\end{examp}

\begin{figure}
\begin{center}
\subfigure[]{\includegraphics[width=.38\linewidth]{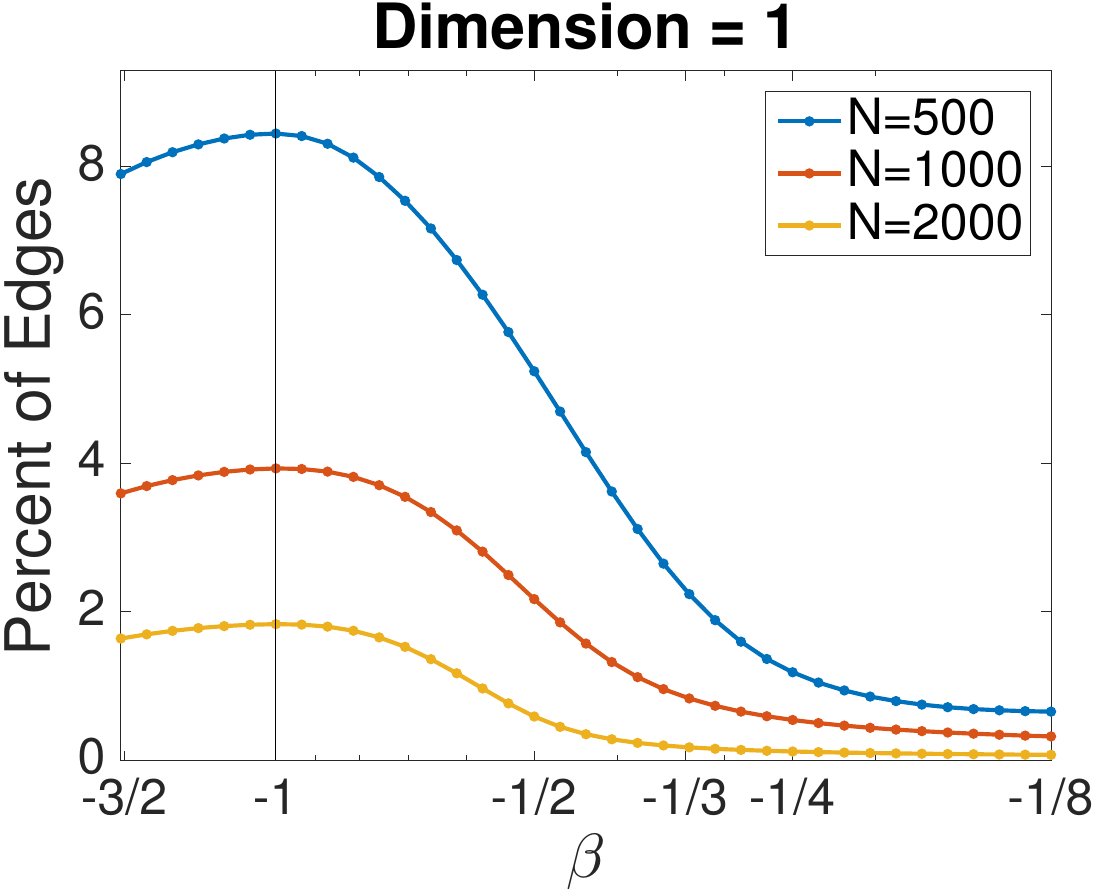}}
\subfigure[]{\includegraphics[width=.38\linewidth]{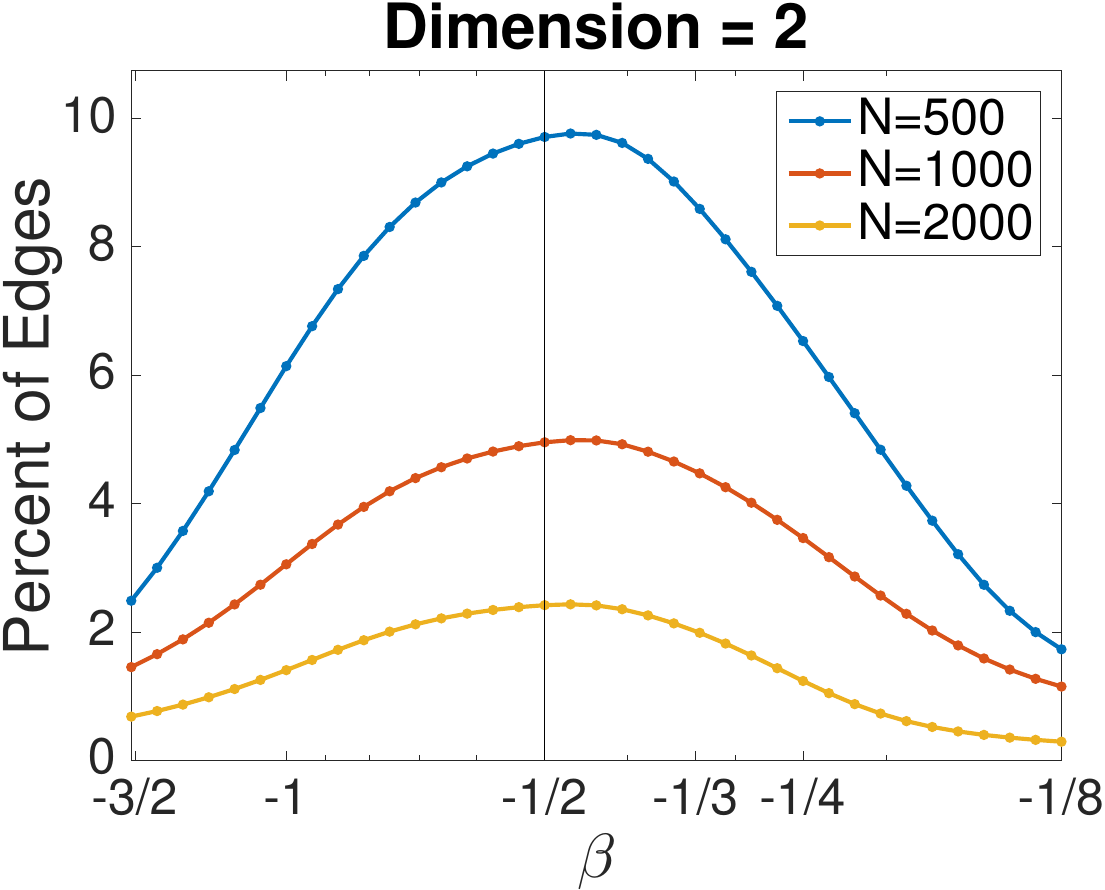}}\\
\subfigure[]{\includegraphics[width=.38\linewidth]{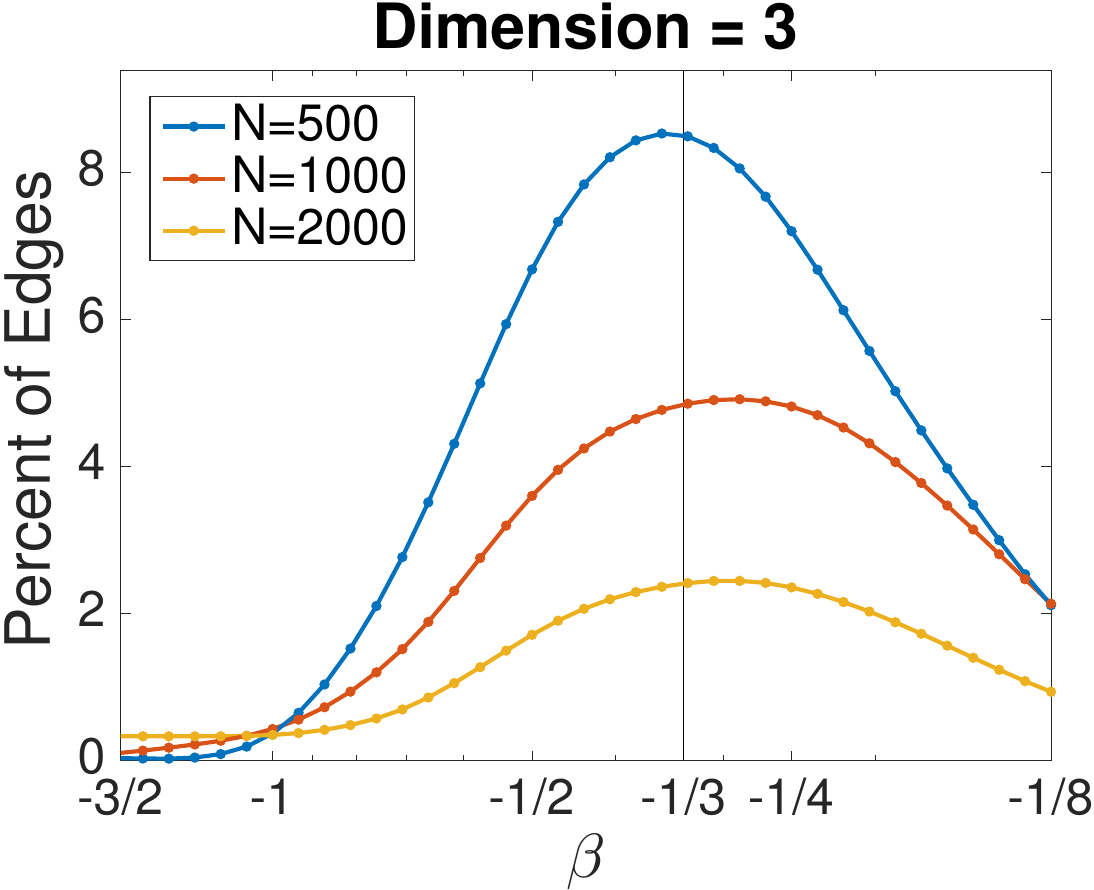}}
\subfigure[]{\includegraphics[width=.38\linewidth]{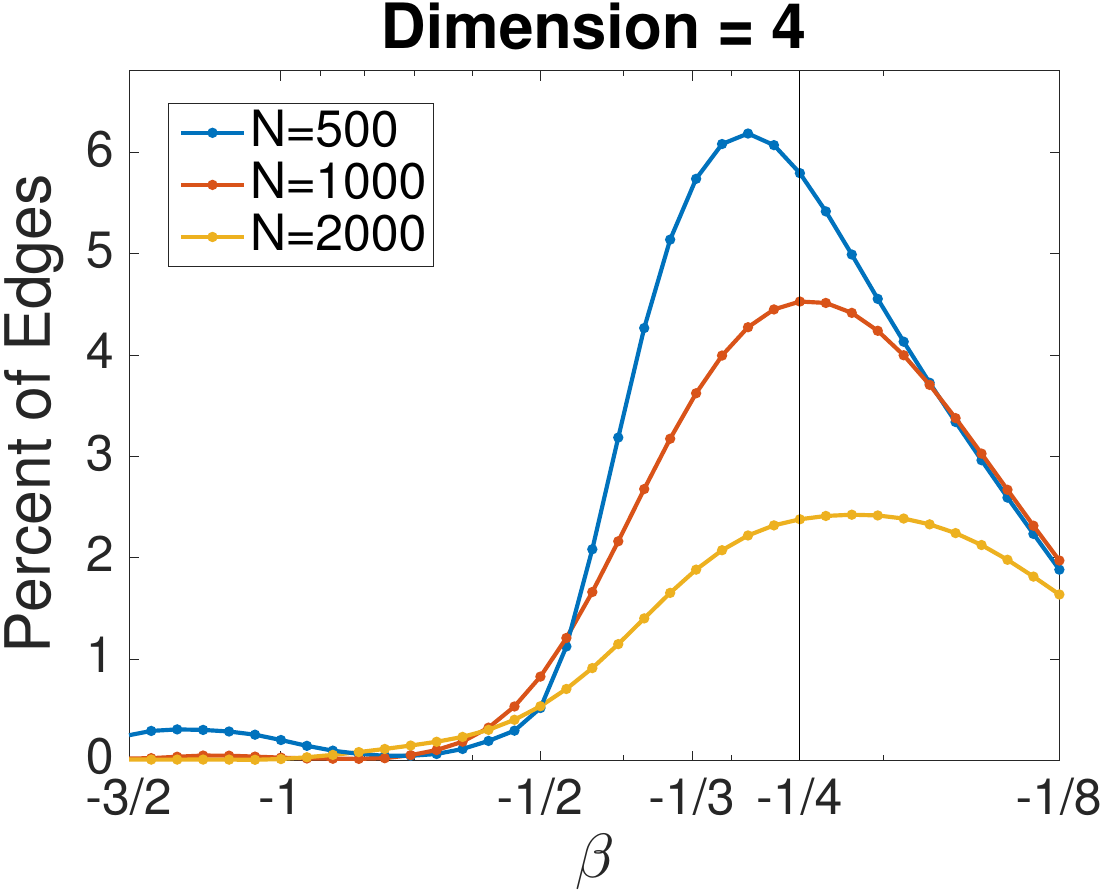}}
\end{center}
\caption{ \rm Persistence of the clustering of a cut-Gaussian distribution using the multi-scale graph construction with $\rho = q^{\beta}$, where the distribution is in dimension (a) $1$ (b) $2$ (c) $3$ (d) $4$.  Persistence is measured as a percentage of the total number of possible edges in the graph, $N(N-1)/2$ as a function of $\beta$.  Each data point represents an average over 500 data sets, each data set starts with sufficiently many points randomly sampled from an $m$-dimensional Gaussian so that after points in the gap region are rejected there are $N$ points remaining.  The correct homology (two connected components) is most persistent when $\beta$ is near $-1/m$ implying the optimal multi-scale graph construction is the CkNN construction where $\rho\propto q^{-1/m}$.  \label{clusteringFig1}}
\end{figure}

In the next two examples, we illustrate the theoretical result that the choice of $\beta = -1/m$ in the bandwidth function $\rho = q^{\beta}$ is optimal, where $m$ is the dimension of the data.

\begin{examp} \label{ex2} \rm We begin with the zero-order homology. We will demonstrate empirically that the choice $\beta = -1/m$ maximizes the persistence of the correct clustering, for $1 \leq m \leq 4$.  Consider data sampled from an $m$-dimensional Gaussian distribution with a gap of radial width $w=0.1^{1/m}$ centered at radius $w+3m/10$. (The dependence on the dimension $m$ is necessary to insure that there are two connected components for small data sets.)  The radial gap separates $\mathbb{R}^m$ into two connected components, the compact interior $m$-ball, and the non-compact shell extending to infinity with density decaying exponentially to zero.

Given a data set sampled from this density, we can construct a graph using the multi-scale graph construction which connects two points $x,y$ if $||x-y|| < \delta\sqrt{\rho(x)\rho(y)}$.  Since the true density is known, we consider the bandwidth functions $\rho = q^{\beta}$ for $\beta \in [-3/2,-1/8]$.  For each value of $\beta$ we used the fast clustering algorithm to identify the minimum and maximum numbers of edges which would identify the correct clusters.  We measured the persistence of the correct clustering as the difference between the minimum and maximum numbers of edges which identified the correct clusters divided by the total number of possible edges $N(N-1)/2$.  We then repeated this experiment for 500 random samples of the distribution and averaged the persistence of the correct clustering for each value of $\beta$. The results are shown in Fig.~\ref{clusteringFig1}.  Notice that for each dimension $m=1,...,4$ the persistence has a distinctive peak centered near $\beta = -1/m$ which indicates that the true clustering is the most persistent using the multi-scale graph construction that is equivalent to the CkNN.
\end{examp}

\begin{figure}
\begin{center}
\subfigure[]{\includegraphics[width=.4\linewidth]{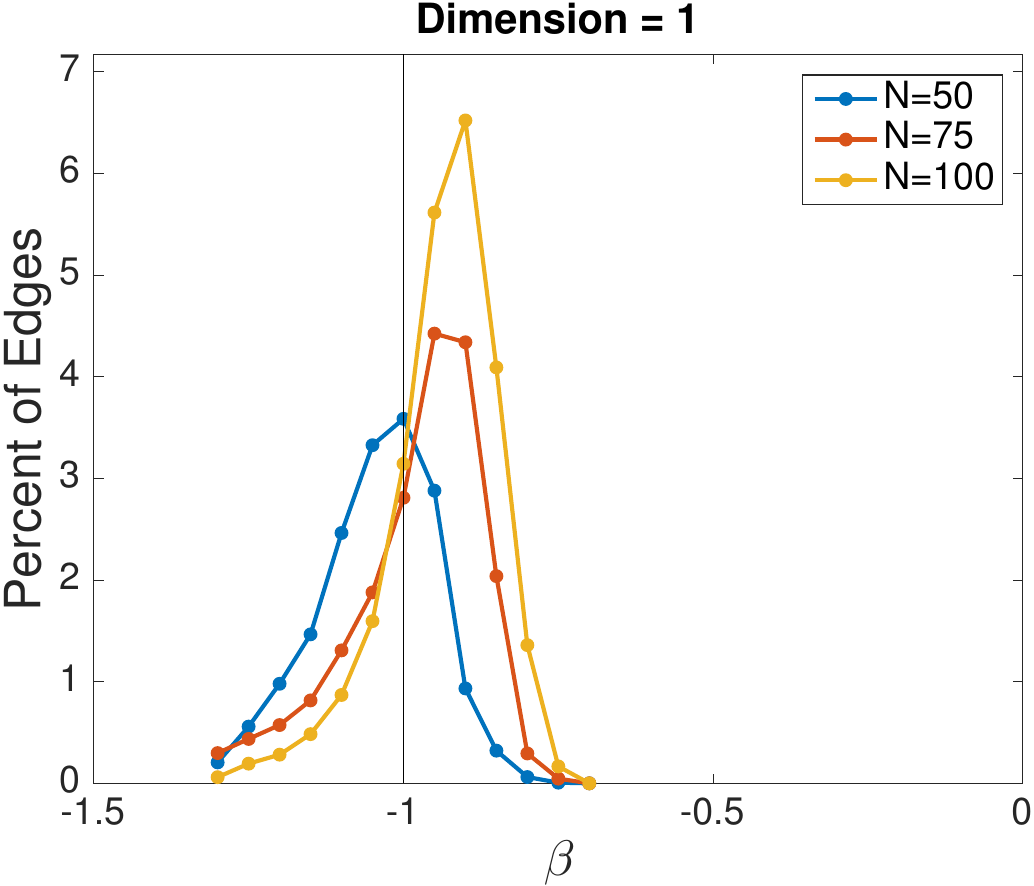}}\hspace{.1\linewidth}
\subfigure[]{\includegraphics[width=.4\linewidth]{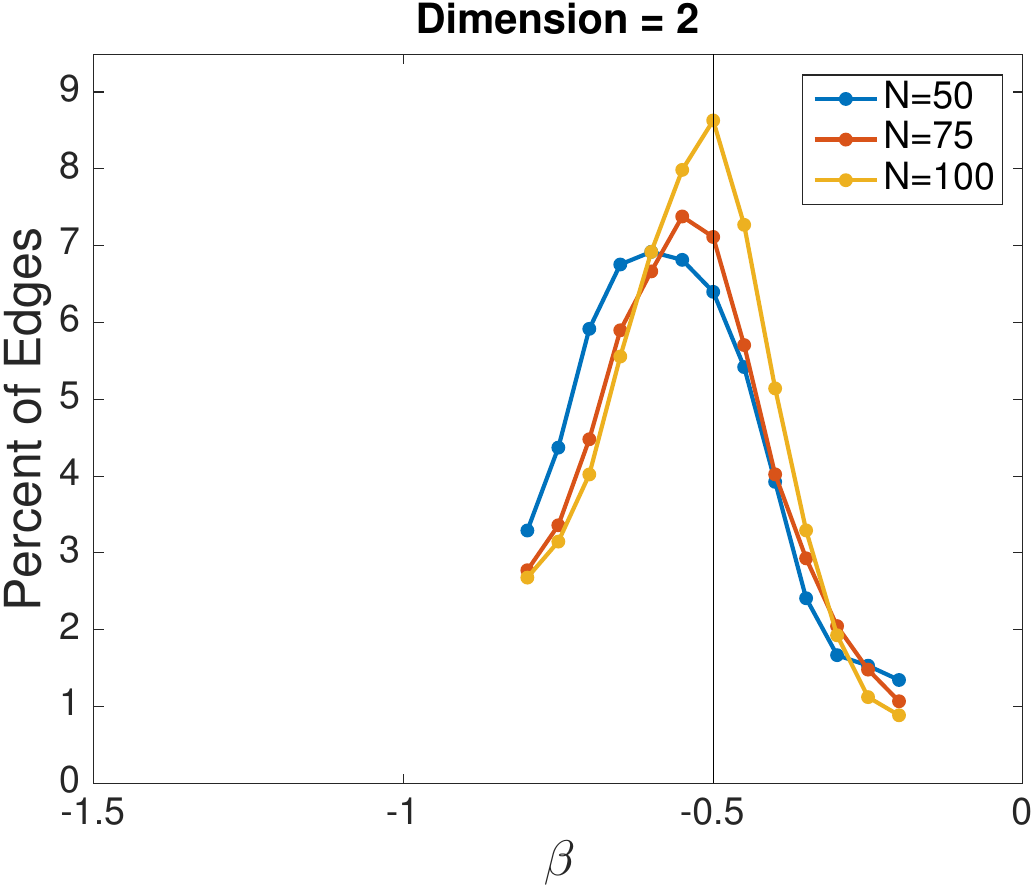}}
\end{center}
\caption{ \rm Persistence of the true homology using the multi-scale graph construction with $\rho = q^{\beta}$.  The underlying data sets are (a) a cut 1-dimensional Gaussian embedded in the plane with a loop introduced and (b) the same 2-dimensional cut Gaussian density as in Fig.~\ref{clusteringFig1}(b).  Persistence is measured as a percentage of the total number of possible edges in the graph, $N(N-1)/2$ as a function of $\beta$.  Each data point represents an average over 200 data sets.  The correct homology ($\beta_0=2$ and $\beta_1=1$ for both (a) and (b)) is most persistent when $\beta$ is near $-1/m$ implying the optimal multi-scale graph construction is the CkNN construction where $\rho\propto q^{-1/m}$.  \label{clusteringFig2H1}}
\end{figure}

\begin{examp} \label{ex3} \rm Next, we examine the discovery of the full homology for a 1-dimen\-sional and 2-dimensional example using Javaplex \cite{Javaplex}.  To obtain a one-dimensional example with two connected components we generated a set of points from a standard Gaussian on the $t$-axis with points with $0.4<t<0.8$ removed, and then mapped these points into the plane  via $t\mapsto (t^3-t,1/(t^2+1))^\top$. The embedding induces a loop, and so there is non-trivial 1-homology in the 1-dimensional example.  The correct homology for this example has Betti numbers $\beta_0=2$ and $\beta_1=1$, which is exactly the same as the true homology for the 2-dimensional cut Gaussian from Fig.~\ref{clusteringFig1}, which will be our 2-dimensional example.  In Fig.~\ref{clusteringFig2H1} we show the persistence of the correct homology in terms of the percentage of edges as a function of the parameter $\beta$ that defines the multi-scale graph construction.  As with the clustering example, the results clearly show that the correct homology is most persistent when $\beta$ is near $-1/m$ which corresponds to the CkNN graph construction.
\end{examp}

\subsection{Identifying patterns in images with homology}

In this section we consider the identification of periodic patterns or textures from image data. We take a topological approach to the problem, and attempt to classify the orbifold (the quotient of the plane by the group of symmetries) by its topological signature. Note that to achieve this, we will not need to learn the symmetry group, but will directly analyze the orbifold by processing the point cloud of small $s\times s$ pixel subimages of the complete image in $\mathbb{R}^{s^2}$ without regard to the original location of the subimages.

\begin{figure}
\begin{center}
\subfigure[]{\includegraphics[width=.95\linewidth,trim={4cm 0 3cm 0},clip]{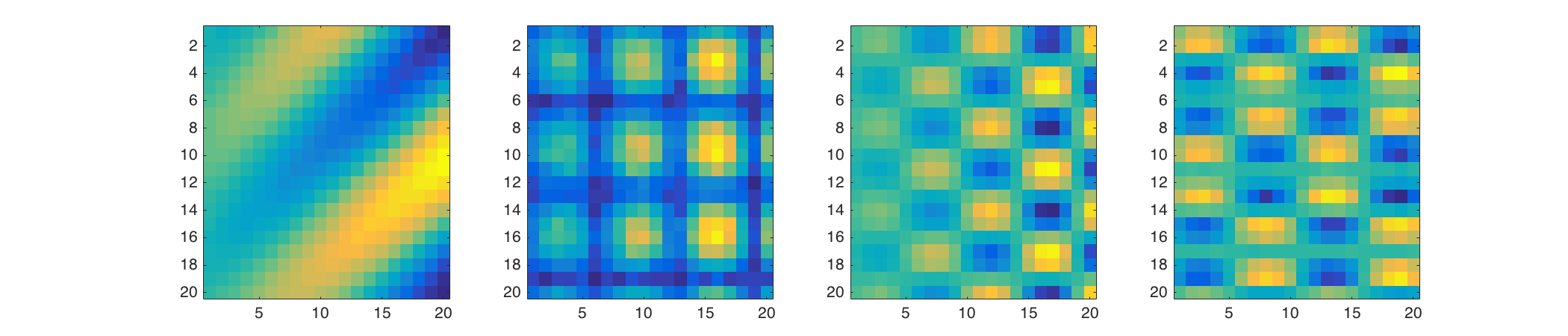}}\\
\subfigure[]{\includegraphics[width=.95\linewidth,trim={4cm 0 3cm 0},clip]{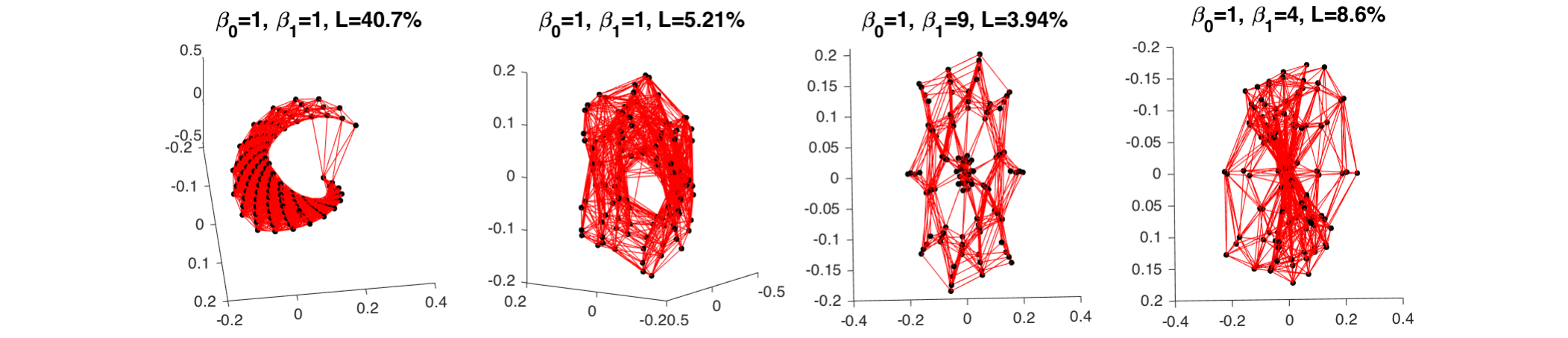}}\\
\subfigure[]{\includegraphics[width=.95\linewidth,trim={4cm 0 3cm 0},clip]{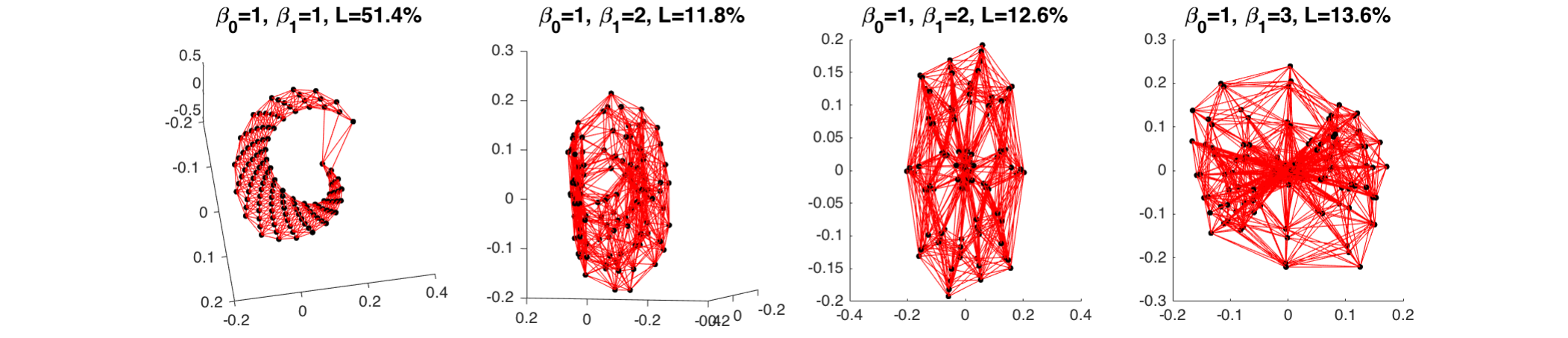}}
\end{center}
\caption{ \rm (a) Four simple patterns whose sub-image orbifolds exhibit nontrivial homology. The true values of $\beta_1$ for the four patterns are $1,2,2,$ and $3$ respectively.    (b) Using fixed $\epsilon$-ball graph construction and selecting $\epsilon$ from the region where the homology generators span the largest proportion $L$ of total edges without changing (the most persistent homology).  (c) Using the CkNN construction and selecting $\delta$ from the region where the homology generators span the largest $L$ without changing.  The homology was computed with JavaPlex \cite{Javaplex}. \label{patternFig}}
\end{figure}

\begin{examp} \rm
  In Fig.~\ref{patternFig}(a) we show four simple patterns that can be distinguished by homology.  To make the problem more difficult, the patterns are corrupted by a `brightness' gradient which makes identifying the correct homology difficult.  From left to right the patterns are: First, stripes have a single periodicity so that $\beta_1=1$; second, a pattern that is periodic in both the vertical and horizontal directions, implying $\beta_1=2$; third, a checkerboard pattern also has only two periodicities $\beta_1=2$, but they have different periods than the previous pattern; fourth, a hexagonal pattern has 3 periodicities so that $\beta_1=3$.  To see the three periodicities in the fourth pattern, notice that the pattern repeats when moving right two blocks, or down three blocks, or right one block and down two blocks; each of these periodicities yields a distinct homology class.

To identify the pattern in each image, we cut each full image into 9-by-9 sub-images, yielding 121 points in $\mathbb{R}^{81}$. In Fig.~\ref{patternFig}(b) we show the results of applying the fixed $\epsilon$-ball graph construction to each of the four sets of sub-images.  In order to choose $\epsilon$ we used JavaPlex \cite{Javaplex} to compute the persistent homology and then chose the region with the longest persistence (meaning the region of $\epsilon$ where the homology went the longest without changing).  In the title of each plot we show first two betti numbers for the graph constructed with this value of $\epsilon$, we also show the length of the persistence in terms of the percentage of edges for which the homology is unchanged.  In Fig.~\ref{patternFig}(c) we repeated this experiment using the CkNN construction, choosing $\delta$ from the region with the longest unchanging homology.  In this set of examples, the CkNN construction is more efficient, and finds the correct orbifold homology.

When the `brightness' gradient is removed, both the fixed $\epsilon$-ball and CkNN constructions identify the correct homology in the most persistent region.  However, the `brightness' gradient means that the patterns do not exactly meet (see the leftmost panels in Figs.~\ref{patternFig}(b,c)).  For the simple stripe pattern, the $\epsilon$-ball construction can still bridge the gap and identify the correct homology; however, for the more complex patterns, the $\epsilon$-ball construction finds many spurious homology classes which obscure the correct homology.
\end{examp}

\begin{figure}[h]
\begin{center}
\subfigure[]{\includegraphics[width=.35\linewidth]{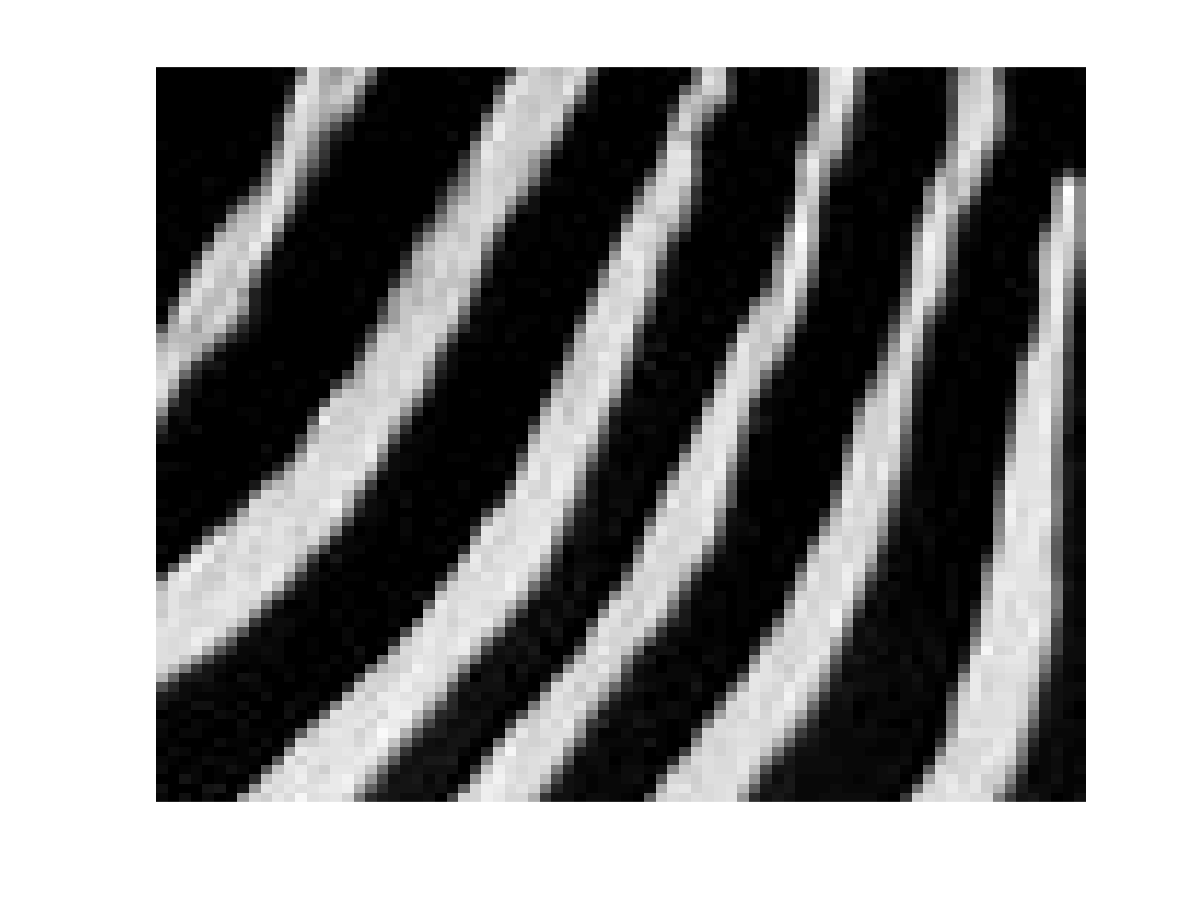}}
\subfigure[]{\includegraphics[width=.295\linewidth]{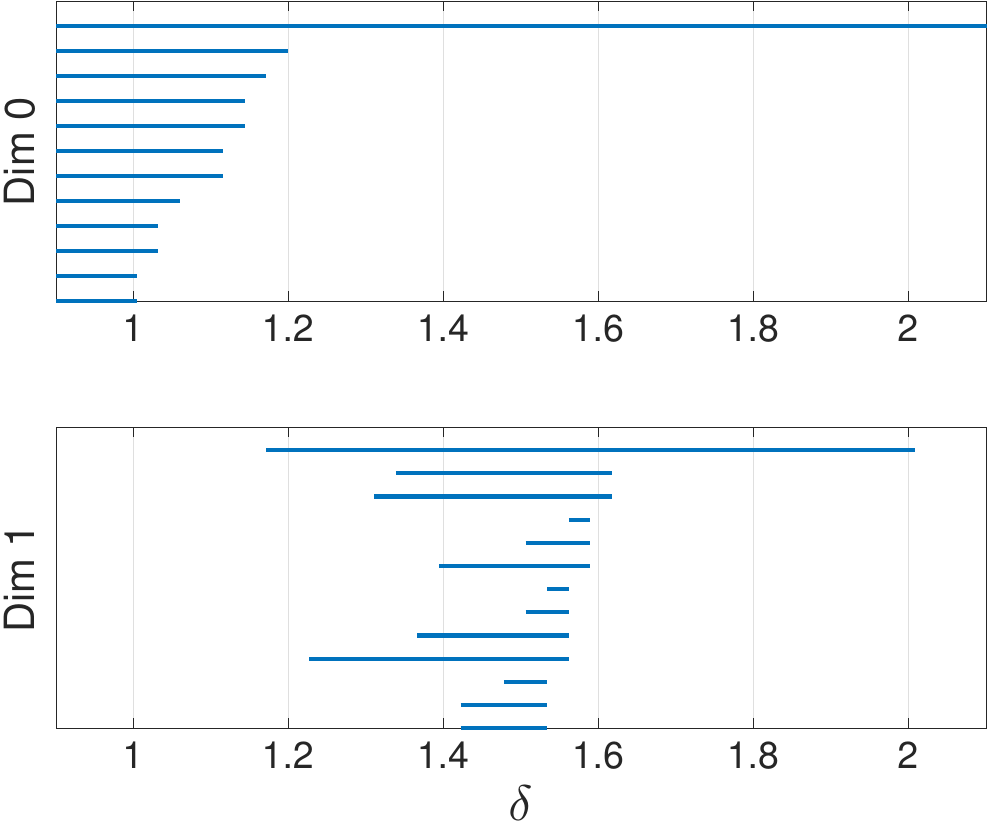}}
\subfigure[]{\includegraphics[width=.33\linewidth]{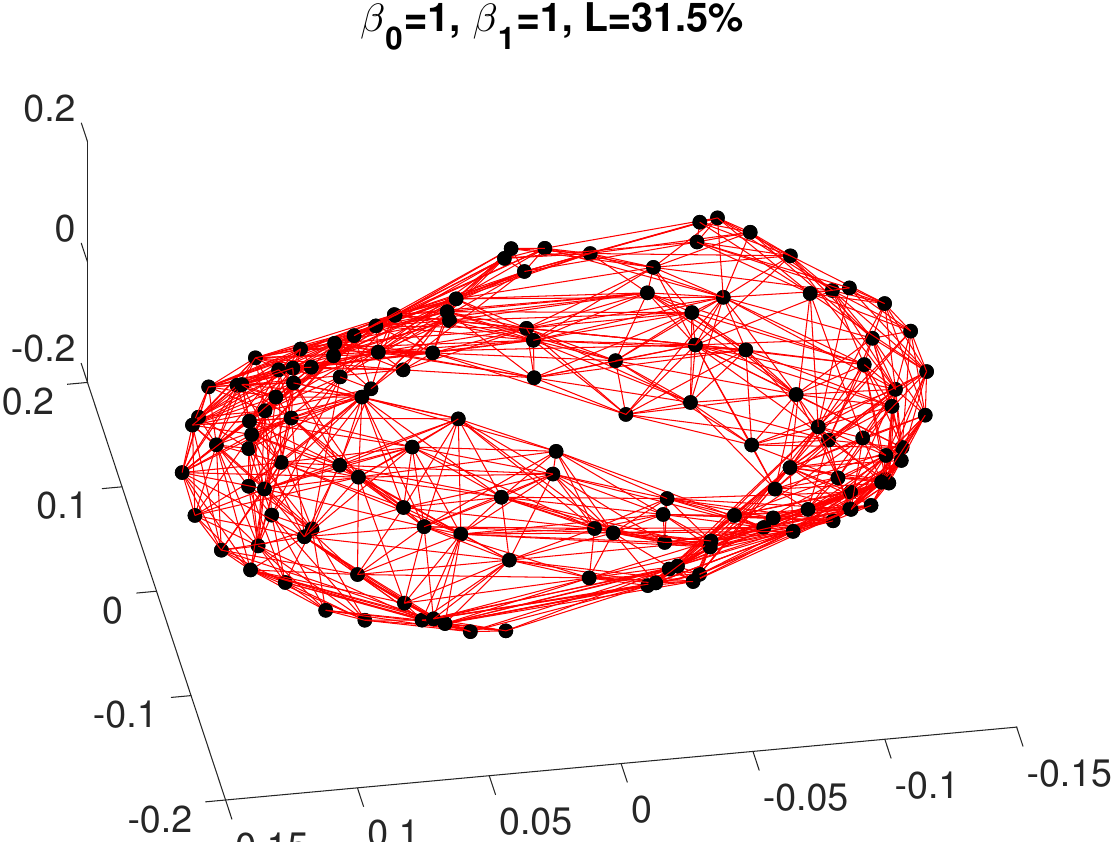}}\\
\subfigure[]{\includegraphics[width=.35\linewidth]{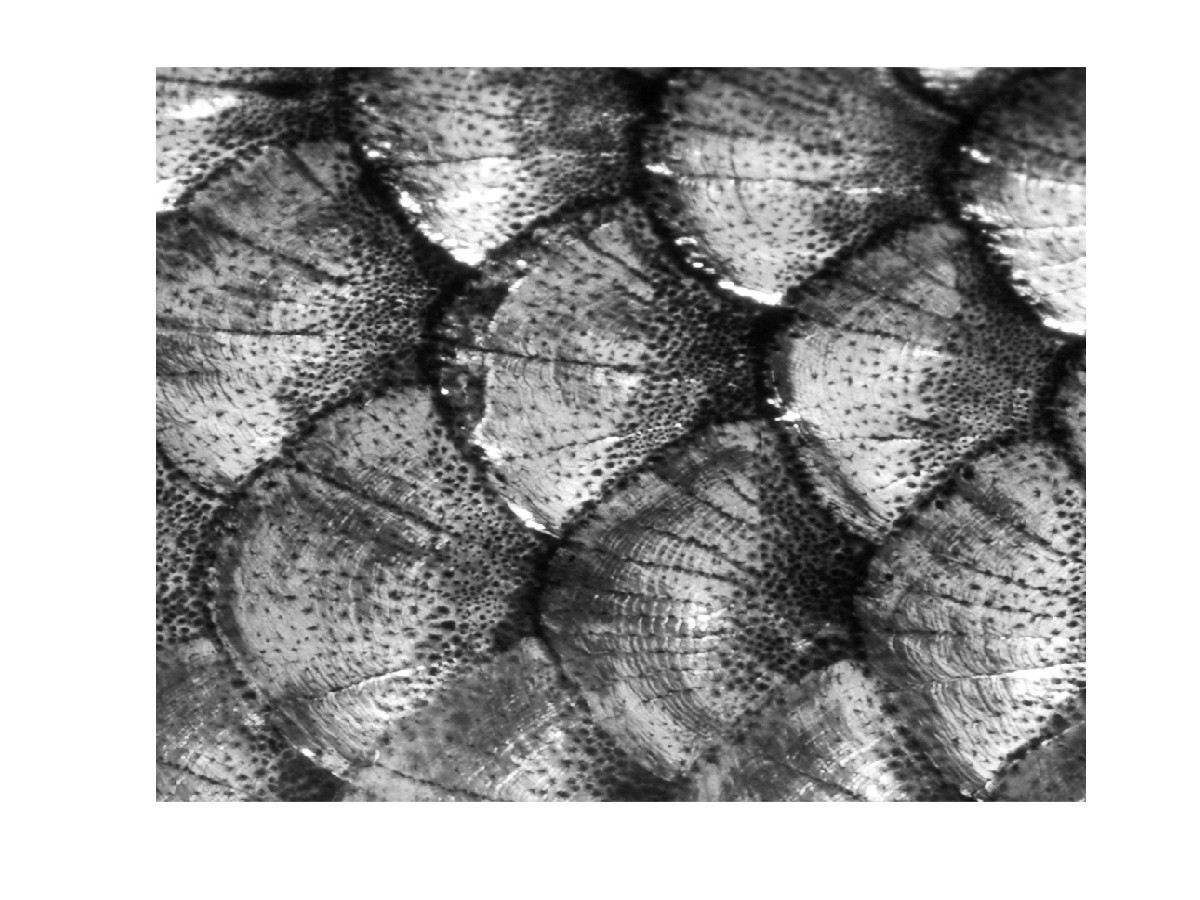}}
\subfigure[]{\includegraphics[width=.295\linewidth]{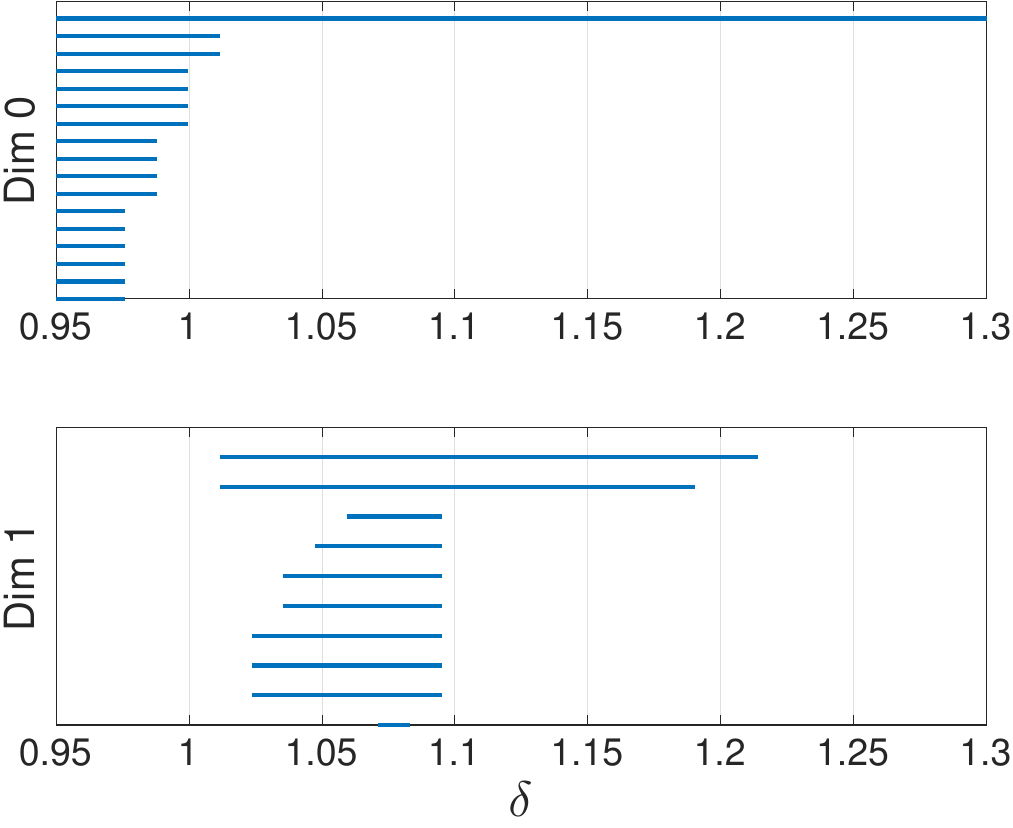}}
\subfigure[]{\includegraphics[width=.33\linewidth]{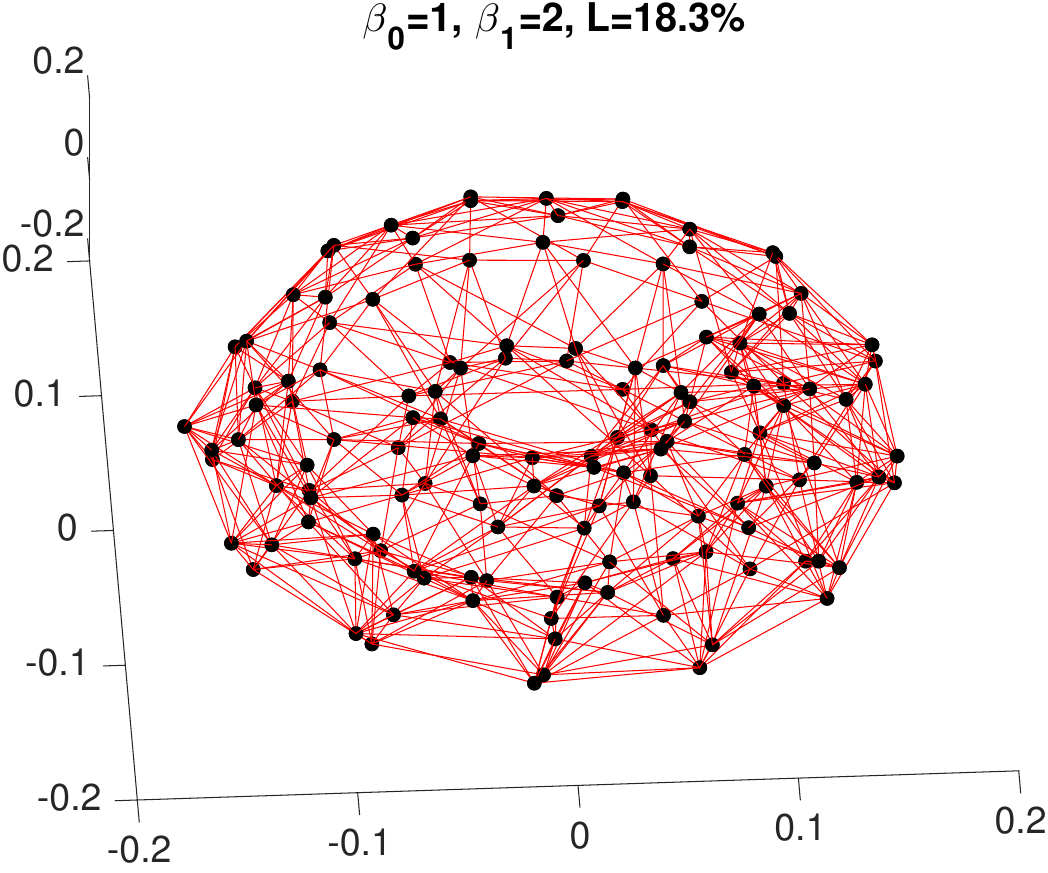}}\\
\end{center}
\vspace*{-8pt}\caption{ \rm (a) Image of zebra stripes \cite{Zebra}.  (b) Persistence diagram for the space of subimages. (c) The CkNN graph construction on the subimage space with $\delta$ chosen from the longest region where the homology is constant.  (d) Image of fish scales \cite{Fish}.  (e) Persistence diagram. (f) The CkNN graph construction on the PCA projection of the subimage space with $\delta$ chosen from the longest region where the homology is constant.  The homology was computed with JavaPlex \cite{Javaplex}. \label{patternFig2}}\vspace*{-8pt}
\end{figure}

\begin{examp}\rm

We applied the CkNN graph construction to identify patterns in real images of zebra stripes and fish scales in Figure \ref{patternFig2}.  The images shown in Fig.~\ref{patternFig2} were taken from larger images \cite{Zebra,Fish}.  In order to analyze the subimage spaces we first decimated the images to reduce the resolution, (by a factor of 2 for the stripes and factor of 25 for the scales) in each case to yield a $40\times 40$ pixel image.  We then formed the set of all 23-pixel by 23-pixel subimages shifting by two pixels in the vertical and horizontal directions to obtain 136 subimages, considered as points in $\mathbb{R}^{529}$.  We built the rescaled distance matrix $\frac{||x-y||}{\sqrt{||x-x_k||\,||y-y_k||}}$ using $k=5$.  In Fig.~\ref{patternFig2}(b,e) we show the persistent homology of the VR complex associated to the respective distance matrices in terms of the parameter $\delta$ of the CkNN.  Using this diagram, we chose the maximal interval of $\delta$ for which the homology was stable. In Fig.~\ref{patternFig2}(c,f) we show the CkNN graph for $\delta$ chosen from this region along with the correct Betti numbers.

\end{examp}

\section{Convergence of graph Laplacians}\label{background}

We approach the problem of consistent graph representations of manifolds by assuming we have data points which are sampled from a smooth probability distribution defined on the manifold.  We view this assumption as establishing a ``geometric prior'' for the problem.  Our main goal is  to approximate the Laplace-Beltrami operator on the manifold, independent of the sampling distribution, and using as little data as possible. This is a natural extension of ideas developed by \cite{belkin2003laplacian} and Coifman and collaborators \cite{diffusion,nadler2005diffusion,diffcoords,nadler2008diffusion,NadlerCluster}.

The proof of consistency for any graph construction has three parts, two statistical and one analytic: (1)  showing that the (discrete) graph Laplacian is a consistent statistical estimator of a (continuous) integral operator (either pointwise or spectrally), (2) showing that this estimator has finite variance, and (3) an asymptotic expansion of the integral operator that reveals the Laplace-Beltrami operator as the leading order term.  The theory of convergence of kernel weighted graph Laplacians to their continuous counterparts was initiated with \cite{belkin2003laplacian} which proved parts (1) and (3) for uniform sampling on compact manifolds, and part (2) was later completed in \cite{SingerEstimate}.  Parts (1) and (3) were then extended to non-uniform sampling in \cite{diffusion}, and part (2) was completed in \cite{BH14}.  The extension to noncompact manifolds was similarly divided.  First, \cite{heinthesis} provided the proof of parts (1) and (3), and introduced the necessary additional geometric assumptions which were required for the asymptotic analysis on non-compact manifolds.  However, \cite{BH14} showed that the pointwise errors on non-compact manifolds could be unbounded and so additional restrictions had to be imposed on the kernel used to construct the graph Laplacian.  In order to construct the desired operators on non-compact manifolds, \cite{BH14} showed that variable bandwidth kernels were required (part (1) for variable bandwidth kernels was previously achieved in \cite{Ting2010}).  In all of this previous work, parts (1) and (2) are always proven pointwise, despite the fact that most applications require spectral convergence.

The geometric prior assumes that the set of points that have positive sampling density is a smooth manifold $\mathcal{M} \equiv \{x \in \mathbb{R}^n \, : \, q(x)>0\}$ where $q$ is a smooth sampling density.  (This is a natural definition since regions of zero density will not be observed in data sets.)  Some weak assumptions on the manifold $\mathcal{M}$ are required.  The theory of \cite{belkin2003laplacian,diffusion} assumes that the manifold is compact, implying that the density $q$ must be bounded away from zero on $\mathcal{M}$.  The theory of \cite{heinthesis,Hein1,hein2} showed that this assumption could be relaxed, and together with the statistical analysis in \cite{BH14} allows a large class of noncompact manifolds with densities that are not bounded away from zero.  (Note that since $q$ is continuous, if $\mathcal{M}$ is compact the minimum value of $q$ cannot be zero since it is attained on $\mathcal{M}$ which is defined to have nonzero density.)  In this article we require $\mathcal{M}$ to have injectivity radius bounded below and curvature (intrinsic and extrinsic) bounded above; these technical assumptions hold for all compact manifolds and were introduced to allow application to noncompact manifolds in \cite{heinthesis}.

\begin{assum}\label{assumptions} The data points are sampled from a density $q:\mathbb{R}^n\to [0,\infty)$ such that $\mathcal{M} \equiv \{x \in \mathbb{R}^n \, : \, q(x)>0\}$ is a $C^3$ Riemannian manifold with finitely many connected components and $q$ restricted to $\mathcal{M}$ is a $C^3$ function.  The manifold $\mathcal{M}$ inherits a metric $g$ from the ambient space, and the conformal metric $q^{2/d}g$ has injectivity radius bounded away from zero and curvature and second fundamental form bounded above. (Note that $\mathcal{M}$ may be noncompact and may have a boundary.)
\end{assum}

There are several algorithms for estimating the Laplace-Beltrami operator, however the particularly powerful construction in \cite{BH14} is currently the only estimator which allows the sampling density $q$ to be arbitrarily close to zero.  Since we are interested in addressing the problem of non-uniform sampling, we will apply the result of \cite{BH14} for variable bandwidth kernels.  However, the method of \cite{BH14} used a special weighted graph Laplacian in order to approximate the Laplace-Beltrami operator.  The weighted graph Laplacian uses additional normalizations which were first introduced in the diffusion maps algorithm \cite{diffusion} in order to counteract the effect of the sampling density.  The goal of this paper is to use an unweighted graph Laplacian to approximate the Laplace-Beltrami operator, since this allows us to compute the topology of the graph using fast combinatorial algorithms.  In fact, the unweighted graph Laplacian will converge to the Laplace-Beltrami operator of the embedded manifold in the special case of uniform sampling.  The power of our new graph construction is that we can recover a Laplace-Beltrami operator for the manifold from an unweighted graph Laplacian, even when the sampling is not uniform.

Let $q(x)$ represent the sampling density of a data set $\{x_i\}_{i=1}^N \subset \mathcal{M}\subset \mathbb{R}^n$. We combine a global scaling parameter $\delta$ with a local scaling function $\rho(x)$ to define the combined bandwidth $\delta \rho(x)$.  Consider the symmetric variable bandwidth kernel
\begin{equation}\label{e1b}
W_\delta(x,y) = h\left(\frac{||x-y||^2}{\delta^2\rho(x)\rho(y)}\right)
\end{equation}
for any shape function $h:[0,\infty) \to [0,\infty)$ that has exponential decay.  For a function $f$ and any point $x\in \mathcal{M}$ we can form a Monte-Carlo estimate of the integral operator given by
\begin{equation}\label{mc} \mathbb{E}\left[ \frac{1}{N}\sum_{j=1}^N W_{\delta}(x,x_j)f(x_j) \right] = \int_{\mathcal{M}}W_{\delta}(x,y)f(y)q(y)\, dV(y). \end{equation}
From \cite{BH14} the expression in \eqref{mc} has the asymptotic expansion
\begin{eqnarray}\label{bh14eq} \delta^{-m}\int_{\mathcal{M}}&&\hspace*{-.3in}W_{\delta}(x,y)f(y)q(y)\, dV(y) \nonumber\\ &=& m_0 f q \rho^m + \delta^2 \frac{m_2 \rho^{m+2}}{2}\left[\omega f q \rho^{-2} + \mathcal{L}(fq) \right] + \mathcal{O}(\delta^4)  \end{eqnarray}
where
\[ \mathcal{L}h \equiv -\Delta h + (m+2)\nabla \log(\rho) \cdot \nabla h, \]
$\Delta$ is the positive definite Laplace-Beltrami operator, and $\nabla$ is the gradient operator, both with respect to the Riemannian metric that $\mathcal{M}$ inherits from the ambient space $\mathbb{R}^n$.  (Note that in \cite{BH14} the expansion \eqref{bh14eq} is written with respect to the negative definite Laplace Beltrami operator, whereas here we consider the positive definite version.)  In the right-hand side of \eqref{bh14eq}, all functions are evaluated at $x$. The function $\omega = \omega(x)$ depends on the shape function $h$ and the curvature of the manifold at $x$.

\subsection{Pointwise and integrated bias and variance}

The standard graph Laplacian construction starts with a symmetric affinity matrix $W$ whose $ij$ entry quantifies similarity between nodes $x_i$ and $x_j$. We assume in the following that $W=W_\delta$ from \eqref{e1b}, which includes the CkNN construction as a special case. Define the diagonal normalization matrix $D$ as the row sums of $W$, i.e. $D_{ii} = \sum_j W_{ij}$.   The unnormalized graph Laplacian matrix is then
\begin{equation} \label{defLun}
L_{\rm un} = D-W.
\end{equation}
We interpret this matrix as the discrete analog of an operator.  Given a function $f$, by forming a vector $\vec f_j = f(x_j)$ the matrix vector product is
\begin{equation}\label{LunDef} \left(L_{\rm un}\vec f \right)_i = f_i D_{ii} - \sum_{j=1}^N W_{ij}f_j = \sum_{j\neq i} W_{ij}(f_i - f_j) \end{equation}
so that $L_{\rm un}\vec f$ is also a vector that represents a function on the manifold.  Theorem \ref{pointwiseLun} below makes this connection rigorous by showing that
for an appropriate factor $c$ that depends on $N$ and $\delta$, the vector
 $c^{-1}(L_{\rm un}\vec f)_i$ is a consistent statistical estimator of the differential operator
\begin{equation}\label{LunOp} \mathcal{L}_{q,\rho}f \equiv \rho^{m+2} \left[ f\mathcal{L}q - \mathcal{L}(fq) \right] = q \rho^{m+2} \left[\Delta f - \nabla \log\left(q^2\rho^{m+2}\right) \cdot \nabla f \right] \end{equation}
where all functions are evaluated at the point $x_i$.  The last equality in \eqref{LunOp} follows from the definition of $\mathcal{L}$ and applying the product rules for positive definite Laplacian $\Delta(fg) = f\Delta g + g\Delta f - 2\nabla f \cdot \nabla g$ and the gradient $\nabla(fg) = f\nabla g+g\nabla f$.  Theorem \ref{pointwiseLun} shows that $L_{\rm un}$ is a \emph{pointwise} consistent statistical estimator of a differential operator.  In fact consistency was first shown in \cite{Ting2010} and the bias of this estimator was first computed in \cite{BH14}. Here we include the variance of this estimator as well.

\begin{thm}[Pointwise bias and variance of $L_{\rm un}$] \label{pointwiseLun} Let $\mathcal{M}$ be an $m$-dimensional Riemannian manifold embedded in $\mathbb{R}^n$, let $q:\mathcal{M}\to\mathbb{R}$ be a smooth density function, and define $L_{\rm un}$ as in \eqref{defLun}.  For $\{x_i\}_{i=1}^N$ independent samples of $q$, and $f\in C^3(\mathcal{M})$ we have
\begin{equation}\label{pwLun} \mathbb{E}\left[ c^{-1} \left(L_{\rm un}\vec f \right)_i \right] = \mathcal{L}_{q,\rho}f(x_i) + \mathcal{O}(\delta^2) \end{equation}
and
\begin{align}\label{Lunvar} \textup{var}\left[c^{-1} \left(L_{\rm un}\vec f \right)_i \right] = a \frac{\delta^{-m-2}}{N-1} \rho(x_i)^{m+2} q(x_i) ||\nabla f(x_i)||^2 + \mathcal{O}(N^{-1},\delta^2) \end{align}
where $c \equiv \frac{m_2}{2}(N-1) \delta^{m+2}$ and $a \equiv \frac{4 m_{2,2}}{m_2^2}$ are constants defined in the table below.
\end{thm}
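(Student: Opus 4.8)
The plan is to fix the base point $x_i$ and exploit the identity $(L_{\rm un}\vec f)_i = \sum_{j\neq i} W_{ij}(f_i-f_j)$ from \eqref{LunDef}, which, conditionally on $x_i$, expresses $c^{-1}(L_{\rm un}\vec f)_i$ as a sum of $N-1$ independent, identically distributed random variables $c^{-1}Y_j$ with $Y_j \equiv W_\delta(x_i,x_j)\big(f(x_i)-f(x_j)\big)$. Both the conditional mean and the conditional variance then reduce to one-point integrals of the variable-bandwidth kernel against the sampling density, which are precisely the objects governed by the expansion \eqref{bh14eq}. Since $x_i$ is held fixed throughout, all statements are pointwise.

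\emph{Bias.} Conditioning on $x_i$ gives $\mathbb{E}\big[(L_{\rm un}\vec f)_i\mid x_i\big] = (N-1)\int_{\mathcal M} W_\delta(x_i,y)\big(f(x_i)-f(y)\big)q(y)\,dV(y)$. I would apply \eqref{bh14eq} at $x=x_i$ twice: once to the integrand $fq$, and once with $f\equiv 1$ (yielding the expansion of $\int W_\delta q\,dV$, afterward multiplied by the constant $f(x_i)$). Subtracting, the leading terms $m_0 fq\rho^m$ cancel and, crucially, so do the curvature terms $\tfrac{\delta^2 m_2}{2}\,\omega fq\rho^{m}$, leaving
\[ \delta^{-m}\!\int_{\mathcal M}\! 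W_\delta(x_i,y)\big(f(x_i)-f(y)\big)q\,dV = \frac{\delta^2 m_2}{2}\,\rho^{m+2}\big(f\,\mathcal Lq-\mathcal L(fq)\big)+\mathcal O(\delta^4), \]
with all functions evaluated at $x_i$. By the first equality in \eqref{LunOp}, $\rho^{m+2}\big(f\mathcal Lq-\mathcal L(fq)\big)=\mathcal L_{q,\rho}f$; multiplying through by $(N-1)\delta^m$ and dividing by $c=\tfrac{m_2}{2}(N-1)\delta^{m+2}$ gives \eqref{pwLun}. The hypothesis $f\in C^3$ is what guarantees that the remainder in \eqref{bh14eq} remains controlled, of size $\mathcal O(\delta^2)$, after these cancellations and the rescaling.

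\emph{Variance.} Since the $Y_j$ are conditionally i.i.d., $\textup{var}\big[c^{-1}(L_{\rm un}\vec f)_i\mid x_i\big]=(N-1)c^{-2}\big(\mathbb E[Y_1^2\mid x_i]-(\mathbb E[Y_1\mid x_i])^2\big)$. The subtracted-mean term equals $\tfrac{1}{N-1}(\mathcal L_{q,\rho}f)^2+\cdots$ by the bias computation above, hence is of lower order than the claimed leading term and is absorbed into the error. For the dominant term, $\mathbb E[Y_1^2\mid x_i]=\int_{\mathcal M} W_\delta(x_i,y)^2\big(f(x_i)-f(y)\big)^2 q(y)\,dV(y)$, and the key observation is that $W_\delta^2 = h^2\big(\|x-y\|^2/\delta^2\rho(x)\rho(y)\big)$ is again a symmetric variable-bandwidth kernel with the (exponentially decaying) shape function $h^2$. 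Taylor expanding $f(y)-f(x_i)=\langle\nabla f(x_i),y-x_i\rangle+\mathcal O(\|y-x_i\|^2)$ and running the same normal-coordinate rescaling $y-x_i\approx\delta\rho(x_i)u$ that underlies \eqref{bh14eq} produces, to leading order, a factor $(\delta\rho(x_i))^m$ from the volume element, $(\delta\rho(x_i))^2\langle\nabla f(x_i),u\rangle^2$ from the squared linear Taylor term, $q(x_i)$ from the density, and the angular integral $\int_{\mathbb R^m}h(|u|^2)^2\langle\nabla f(x_i),u\rangle^2\,du = m_{2,2}\,\|\nabla f(x_i)\|^2$ by rotational symmetry, where $m_{2,2}\equiv\int h(|u|^2)^2 u_1^2\,du$; the degree-three cross term vanishes by parity. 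Hence $\mathbb E[Y_1^2\mid x_i]=\delta^{m+2}\rho(x_i)^{m+2}q(x_i)\,m_{2,2}\,\|\nabla f(x_i)\|^2+\mathcal O(\delta^{m+4})$, and substituting $c=\tfrac{m_2}{2}(N-1)\delta^{m+2}$ yields \eqref{Lunvar} with $a=4m_{2,2}/m_2^2$, the remaining contributions collapsing into the stated $\mathcal O(N^{-1},\delta^2)$ error.

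\emph{Main obstacle.} The delicate point is the asymptotic evaluation of the second-moment integral $\int W_\delta^2(x_i,y)\big(f(x_i)-f(y)\big)^2 q(y)\,dV(y)$ uniformly in $x_i$: one must track how the variable bandwidth $\rho(y)$ inside the kernel argument (expanded about $\rho(x_i)$), the exponential-map and volume distortion arising from the curvature of $\mathcal M$, and the Taylor remainder of $f$ interact — essentially the estimate of \cite{BH14} for the first moment, now redone for the shape function $h^2$ and with the extra quadratic weight $\big(f(x_i)-f(y)\big)^2$. The mitigating feature is that only the leading-order term of this integral is needed for \eqref{Lunvar}, so all curvature and $\nabla\rho$ corrections, as well as the $\omega$-type terms, enter only the remainder and need not be computed explicitly; this makes the variance estimate strictly less involved than the full bias expansion, despite requiring the new kernel $h^2$.
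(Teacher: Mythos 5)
Your proposal is correct and follows essentially the same route as the paper: decompose $(L_{\rm un}\vec f)_i$ into the $N-1$ conditionally i.i.d.\ terms $W_{ij}(f_i-f_j)$, obtain the bias by applying \eqref{bh14eq} (with the leading and $\omega$ terms cancelling) and rescaling by $c$, and obtain the variance by treating $W_\delta^2$ as another variable-bandwidth kernel with second moment $m_{2,2}$ while absorbing the squared-mean term into $\mathcal{O}(N^{-1},\delta^2)$. The only cosmetic difference is that the paper expands $(f_i-f_j)^2$ termwise and invokes the product rule to get $f^2\mathcal{L}q-2f\mathcal{L}(fq)+\mathcal{L}(f^2q)=2q\|\nabla f\|^2$, whereas you Taylor-expand $f$ directly and evaluate the angular moment; both yield the same leading term $m_{2,2}(\delta\rho)^{m+2}q\|\nabla f\|^2$.
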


Before proving Theorem \ref{pointwiseLun}, we comment on the constants such as $m_0, m_2$ and $m_{2,2}$, which depend on the shape of the kernel function.
  It was originally shown in \cite{diffusion} that the expansion \eqref{bh14eq} holds for any kernel with exponential decay at infinity.  A common kernel used in geometric applications is the Gaussian $h(||z||^2) = \exp(-||z||^2/2)$, due to its smoothness.  For topological applications, we are interested in building an unnormalized graph, whose construction corresponds to a kernel defined by the indicator function $\mathbbm{1}_{||z||^2<1}$.  The sharp cutoff function enforces the fact that each pair of points is either connected or not.  (The indicator kernel satisfies  \eqref{bh14eq} since it has compact support and thus has exponential decay.)

  In the table below we give formulas for all the constants which appear in the results, along with their values for the Gaussian and indicator functions (note that $B_m$ is the unit ball in $\mathbb{R}^m$).

\begin{center}
\begin{tabular}{|c|c|c|c|} \hline Constant & Formula & $h(x)=e^{-x/2}$ & $h(x) = \mathbbm{1}_{x<1}$ \\ \hline
$m_0$ & $\int_{\mathbb{R}^m} h(||z||^2)\, dz$ & $(2\pi)^{m/2}$ & $\textup{vol}(B_m)$ \\ \hline
$m_2$ & $\int_{\mathbb{R}^m} z_1^2 h(||z||^2)\, dz$ & $(2\pi)^{m/2}$ & $(m+2)^{-1}\textup{vol}(B_m)$ \\ \hline
$m_{2,2}$ & $\int_{\mathbb{R}^m} z_1^2 h(||z||^2)^2\, dz$ & $2^{-1}\pi^{m/2}$ & $(m+2)^{-1}\textup{vol}(B_m)$ \\ \hline
$a$ & $4 m_{2,2}(m_2)^{-2}$ & $2^{1-m}\pi^{-m/2}$ & $4(m+2)\textup{vol}(B_m)^{-1}$ \\ \hline
\end{tabular}
\end{center}

It turns out that the variance of the statistical estimators considered below are all proportional to the constant $a$.  As a function of the intrinsic dimension $m$, the constant $a$ decreases exponentially for the Gaussian kernel.  On the other hand, since the volume of a unit ball decays like $m^{-m/2-1/2}$ for large $m$, the constant $a$ increases exponentially for the indicator kernel.

\vspace*{4pt}\noindent{\it Proof of Theorem \ref{pointwiseLun}}.
Notice that the term $i=j$ is zero and can be left out of the summation, this allows us to consider the expectation of $L_{\rm un}\vec f$ only over the terms $i\neq j$ which are identically distributed.  From \eqref{bh14eq} the $i$-th entry of the vector $\left(L_{\rm un}\vec f\right)_i$ has expected value
\[ \mathbb{E}\left[ \left(L_{\rm un}\vec f \right)_i \right] = \frac{m_2}{2}(N-1)(\delta \rho)^{m+2} \left( f\mathcal{L}q - \mathcal{L}(fq) \right) + \mathcal{O}(N\delta^{m+4}) \]
and by \eqref{LunOp} we have $\mathcal{L}_{q,\rho} \equiv \rho^{m+2}(f\mathcal{L}q - \mathcal{L}(fq))$.  Dividing by the constant $c$ yields \eqref{pwLun}, which shows that $L_{\rm un}$ is a pointwise consistent estimator with bias of order $\delta^2$.

We can also compute the variance of this estimator defined as
\begin{align} \textup{var}\left(c^{-1} \left(L_{\rm un}\vec f \right)_i \right) &\equiv \mathbb{E}\left[ \left( c^{-1} \left(L_{\rm un}\vec f \right)_i - \mathbb{E}\left[ c^{-1}\left(L_{\rm un}\vec f \right)_i \right]  \right)^2 \right] \nonumber\\&= c^{-2}\mathbb{E}\left[ \left( \sum_{j\neq i, j=1}^N f_i W_{ij} - W_{ij}f_j - \mathbb{E}\left[ f_i W_{ij} - W_{ij}f_j \right] \right)^2\right]. \end{align}
Since $x_j$ are independent we have
\begin{align} &\textup{var}\left(c^{-1} \left(L_{\rm un}\vec f \right)_i \right)= c^{-2}(N-1) \mathbb{E}\left[ \left(f_i W_{ij} - W_{ij}f_j - \mathbb{E}\left[ f_i W_{ij} - W_{ij}f_j \right] \right)^2\right] \nonumber \\
&= c^{-2}(N-1) \mathbb{E}\left[ \left(f_i W_{ij} - W_{ij}f_j \right)^2 \right] - c^{-2}(N-1)\mathbb{E}\left[ f_i W_{ij} - W_{ij}f_j \right]^2 \nonumber \end{align}
Notice that for the last term we have
\begin{align}& c^{-2}(N-1)\mathbb{E}\left[ f_i W_{ij} - W_{ij}f_j \right]^2= \frac{1}{N-1} \mathbb{E}\left[ \frac{N-1}{c} \left(f_i W_{ij} - W_{ij}f_j\right) \right]^2 \\&= \frac{1}{N-1} (\mathcal{L}_{q,\rho}f(x_i))^2 + \mathcal{O}(\delta^2) \nonumber \end{align}
this term will be higher order so we summarize it as $\mathcal{O}(N^{-1},\delta^2)$.  Computing the remaining term we find the variance to be
\begin{align}\label{th1eq1} \textup{var}\left(c^{-1} \left(L_{\rm un}\vec f \right)_i \right) &= c^{-2}(N-1) \mathbb{E}\left[ \left(f_i W_{ij} - W_{ij}f_j \right)^2\right] + \mathcal{O}(N^{-1},\delta^2) \nonumber \\
&= c^{-2}(N-1) \mathbb{E}\left[ f_i^2 W_{ij}^2 - 2 f_i W_{ij}^2 f_j + W_{ij}^2f_j^2 \right] + \mathcal{O}(N^{-1},\delta^2)  \end{align}
Since $W_{ij}^2$ is also a local kernel with moments $m_{0,2}$ and $m_{2,2}$ the above asymptotic expansions apply and we find
\begin{align}  &\mathbb{E}\left[ f_i^2 W_{ij}^2 - 2 f_i W_{ij}^2 f_j + W_{ij}^2f_j^2 \right] \nonumber \\ &= \frac{m_{2,2}}{2}(\delta \rho)^{m+2} \left( f^2 \mathcal{L}q - 2 f\mathcal{L}(fq) + \mathcal{L}(f^2q) \right) + \mathcal{O}(\delta^{m+4}) \nonumber \\
&= m_{2,2}(\delta \rho)^{m+2} q ||\nabla f||^2 + \mathcal{O}(\delta^{m+4}) \nonumber \end{align}
where the last equality follows from the applying the product rule.  Substituting the previous expression into \eqref{th1eq1} verifies \eqref{Lunvar}.
\qed

Theorem \ref{pointwiseLun} gives a complete description of the pointwise convergence of the discrete operator $L_{\rm un}$.
While the expansion \eqref{pwLun} shows that the matrix $c^{-1}L_{\rm un}$ approximates the operator $\mathcal{L}_{q,\rho}$, it does not tell us how the eigenvectors of $c^{-1}L_{\rm un}$ approximate the eigenfunctions of $\mathcal{L}_{q,\rho}$.  That relationship is the subject of Theorem \ref{spectralLun} below.

Since $c^{-1}L_{\rm un}$ is a symmetric matrix, we can interpret the eigenvectors $\vec f$ as the sequential orthogonal minimizers of the functional
\begin{equation}\label{functional} \Lambda(f) = \frac{\vec f \,^\top c^{-1}L_{\rm un} \vec f}{\vec f \,^\top \vec f}. \end{equation}
By dividing the numerator and denominator by $N$, we can interpret the inner product $N^{-1} \vec f \,^\top \vec f$ as an integral over the manifold since
\[ \mathbb{E}\left[N^{-1} \vec f \,^\top \vec f\right] = \mathbb{E}\left[ \frac{1}{N} \sum_{i=1}^N f(x_i)^2 \right] = \int_{\mathcal{M}} f(x)^2 q(x)\, dV(x) = \left<f^2,q\right>_{dV}. \]
It is easy to see that the above estimator has variance $N^{-1}\left[\left<f^4,q\right>_{dV} - \left<f^2,q\right>^2_{dV}\right]$.
Similarly, in Theorem \ref{spectralLun} we will interpret $\frac{1}{N}\vec f^\top c^{-1}L_{\rm un} \vec f$ as approximating an integral over the manifold.

\begin{thm}[Integrated bias and variance of $L_{\rm un}$]\label{spectralLun} Let $\mathcal{M}$ be an $m$-dimensional Riemannian manifold embedded in $\mathbb{R}^n$ and let $q:\mathcal{M}\to\mathbb{R}$ be a smooth density function.  For $\{x_i\}_{i=1}^N$ independent samples of $q$, and $f\in C^3(\mathcal{M})$ we have
\begin{equation}\label{pwLunInt} \mathbb{E}\left[ (cN)^{-1} \vec f \,^\top L_{\rm un}\vec f \right] = \left<f,q\mathcal{L}_{q,\rho}f \right>_{dV} + \mathcal{O}(\delta^2) \end{equation}
and
\begin{align}\label{LunvarInt} &\textup{var}\left((cN)^{-1} \vec f \,^\top L_{\rm un}\vec f \right)  \nonumber \\=& a \frac{ \delta^{-m-2}}{N(N-1)} \left<f^2,q\mathcal{L}_{q,\rho}(f^2) \right>_{dV} + \frac{4}{N} \left<f^2 ,q (\mathcal{L}_{q,\rho}f)^2 \right>_{dV} + \mathcal{O}(\delta^2).  \end{align}
So assuming the inner products are finite (for example if $\mathcal{M}$ is a compact manifold) we have $ \textup{var}\left((cN)^{-1} \vec f \,^\top L_{\rm un}\vec f \right) = \mathcal{O}(N^{-2}\delta^{-m-2},N^{-1},\delta^2)$.
\end{thm}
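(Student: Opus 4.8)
The plan is to exploit the graph-Laplacian identity $\vec f \,^\top L_{\rm un}\vec f = \tfrac12\sum_{i\neq j}W_{ij}(f_i-f_j)^2 = \sum_{i<j}\Phi(x_i,x_j)$, where $\Phi(x,y)\equiv W_\delta(x,y)(f(x)-f(y))^2$ is symmetric, so that $(cN)^{-1}\vec f \,^\top L_{\rm un}\vec f$ is a scalar multiple of a second-order U-statistic in the i.i.d.\ sample $\{x_i\}$. Then \eqref{pwLunInt} follows from the mean of this U-statistic and \eqref{LunvarInt} from the Hoeffding decomposition of its variance; in both cases the resulting integrals are evaluated with the expansion \eqref{bh14eq} and the algebraic identities for $\mathcal{L}$ and $\mathcal{L}_{q,\rho}$.

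For the bias I would argue by iterated expectation. Conditioning on $x_i$, Theorem \ref{pointwiseLun} (which is really a statement about the conditional expectation given $x_i$) gives $\mathbb{E}\big[(L_{\rm un}\vec f)_i\mid x_i\big]=c\,\mathcal{L}_{q,\rho}f(x_i)+\mathcal{O}(c\delta^2)$, hence $\mathbb{E}\big[f_i\,c^{-1}(L_{\rm un}\vec f)_i\big]=\mathbb{E}\big[f(x_i)\mathcal{L}_{q,\rho}f(x_i)\big]+\mathcal{O}(\delta^2)=\int_{\mathcal M}f\,\mathcal{L}_{q,\rho}f\,q\,dV+\mathcal{O}(\delta^2)=\langle f,q\mathcal{L}_{q,\rho}f\rangle_{dV}+\mathcal{O}(\delta^2)$; averaging over the $N$ identically distributed indices gives \eqref{pwLunInt}. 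Equivalently, one expands $\theta\equiv\mathbb{E}[\Phi(x_1,x_2)]$ directly with \eqref{bh14eq}: the $\mathcal{O}(\delta^m)$ term vanishes because $\Phi$ is $O(\|x-y\|^2)$ on the diagonal, and the $\mathcal{O}(\delta^{m+2})$ term collapses under the product-rule identity $f^2\mathcal{L}q-2f\mathcal{L}(fq)+\mathcal{L}(f^2q)=2q\|\nabla f\|^2$ to $m_2\delta^{m+2}\rho^{m+2}q\|\nabla f\|^2$, whose $q$-integral equals $m_2\delta^{m+2}\langle f,q\mathcal{L}_{q,\rho}f\rangle_{dV}$ because $q\mathcal{L}_{q,\rho}g=-\operatorname{div}(q^2\rho^{m+2}\nabla g)$, so that $\langle g,q\mathcal{L}_{q,\rho}g\rangle_{dV}=\int q^2\rho^{m+2}\|\nabla g\|^2\,dV$.

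For the variance I would use the decomposition $\operatorname{var}\!\big(\sum_{i<j}\Phi(x_i,x_j)\big)=\binom{N}{2}\zeta_2+2(N-2)\binom{N}{2}\zeta_1$, where $\zeta_1=\operatorname{var}\big(\Phi_1(X_1)\big)$ with $\Phi_1(x)=\mathbb{E}[\Phi(x,X_2)]$ and $\zeta_2=\operatorname{var}\big(\Phi(X_1,X_2)\big)$; dividing by $(cN)^2$ sends the $\zeta_1$-term to the $\mathcal{O}(1/N)$ contribution and the $\zeta_2$-term to the $\mathcal{O}(1/(N(N-1)))$ contribution in \eqref{LunvarInt}. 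The degree-one projection is exactly the integral from the bias step, $\Phi_1(x)=m_2\delta^{m+2}\rho^{m+2}q\|\nabla f\|^2+\mathcal{O}(\delta^{m+4})=m_2\delta^{m+2}\big(f\mathcal{L}_{q,\rho}f-\tfrac12\mathcal{L}_{q,\rho}(f^2)\big)+\mathcal{O}(\delta^{m+4})$, using $\mathcal{L}_{q,\rho}(f^2)=2f\mathcal{L}_{q,\rho}f-2q\rho^{m+2}\|\nabla f\|^2$; taking its variance under $q$ exhibits the $\langle f^2,q(\mathcal{L}_{q,\rho}f)^2\rangle_{dV}$ structure. For $\zeta_2$ one expands $\mathbb{E}[\Phi^2]=\mathbb{E}[W_\delta^2(f_1-f_2)^4]$ via \eqref{bh14eq} applied to the squared kernel $W_\delta^2$ (whose relevant second moment is $m_{2,2}$, producing the constant $a=4m_{2,2}/m_2^2$), again noting that the $\mathcal{O}(\delta^m)$ term drops since $(f_1-f_2)^4$ vanishes to fourth order on the diagonal, and reducing the surviving term with the product rules to an integral against $\mathcal{L}_{q,\rho}(f^2)$. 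Provided the displayed inner products are finite — which holds whenever $\mathcal M$ is compact, so $q$, $\rho$ and the derivatives of $f$ are bounded — each remainder is $\mathcal{O}(\delta^2)$ relative to its leading term, giving $\operatorname{var}\big((cN)^{-1}\vec f\,^\top L_{\rm un}\vec f\big)=\mathcal{O}(N^{-2}\delta^{-m-2},N^{-1},\delta^2)$.

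I expect the variance bookkeeping, not the bias, to be the main obstacle. Since both $\Phi$ and $\Phi^2$ vanish to high order on the diagonal $\{x=y\}$, the leading terms of \eqref{bh14eq} cancel and one must carry the expansion a further order, at which point several a priori dominant contributions — the conditional-variance ``diagonal'' terms and the $i\leftrightarrow j$ ``swap'' cross-terms of the quadratic form — partially cancel one another; tracking precisely which pieces survive, and checking that the geometric hypotheses (injectivity radius bounded below, curvature bounded above) keep the error in \eqref{bh14eq} uniform enough to integrate it term-by-term on a non-compact $\mathcal M$, is the delicate step. The algebraic simplifications themselves are routine given the product rules for $\Delta$ and $\nabla$ and the conformal identity already used in Section \ref{uniqueconstruction}.
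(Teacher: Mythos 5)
Your treatment of the bias is correct and is essentially the paper's own argument in different clothing: the paper also reduces $\mathbb{E}\left[(cN)^{-1}\vec f\,^\top L_{\rm un}\vec f\right]$ to $\mathbb{E}\left[f(x_i)\,c^{-1}(L_{\rm un}\vec f)_i\right]$ and invokes the pointwise expansion, and your alternative route through $\mathbb{E}[\Phi(x_1,x_2)]$ gives the equivalent Dirichlet-form representation $\int q^2\rho^{m+2}\|\nabla f\|^2\,dV$.

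The variance is where your plan genuinely departs from the paper, and as sketched it cannot produce the two displayed terms of \eqref{LunvarInt}. The paper never symmetrizes: it keeps the asymmetric summand $\Psi_{ij}=W_{ij}f_i(f_i-f_j)$, expands the square of the double sum, and handles the index-coincidence cases by setting the "swapped" terms equal to the unswapped ones, which is how it lands on $\frac{4}{N}\left<f^2,q(\mathcal{L}_{q,\rho}f)^2\right>_{dV}$ (from $\psi_1(x)=\mathbb{E}[\Psi(x,X)]=\frac{m_2}{2}\delta^{m+2}f\mathcal{L}_{q,\rho}f+\dots$) and on the $a\,\delta^{-m-2}/(N(N-1))$ term (from $\mathbb{E}[\Psi_{12}^2]$). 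Once you pass to the symmetric kernel $\Phi=W_\delta\,(f_1-f_2)^2$, both identifications are destroyed by cancellation. (i) Your $\zeta_2$ requires $\mathbb{E}[W_\delta^2(f_1-f_2)^4]$, and because $(f_1-f_2)^4$ vanishes to \emph{fourth} order on the diagonal, not only the $\mathcal{O}(\delta^m)$ term but also the $\mathcal{O}(\delta^{m+2})$ term of \eqref{bh14eq} vanishes (the $\mathcal{L}(gq)$ coefficient is zero when $g$ vanishes to fourth order), so there is no ``surviving term'' to reduce to an integral against $\mathcal{L}_{q,\rho}(f^2)$ at that order; the true leading behavior is $\mathcal{O}(\delta^{m+4})$ and involves fourth kernel moments that \eqref{bh14eq} does not supply. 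Equivalently, in the unsymmetrized bookkeeping the swap terms $\mathbb{E}[\Psi_{12}\Psi_{21}]=-\mathbb{E}[W^2f_1f_2(f_1-f_2)^2]$ cancel the leading order of $\mathbb{E}[\Psi_{12}^2]$, whereas the paper counts them as equal. (ii) Your projection is $\Phi_1=m_2\delta^{m+2}q\rho^{m+2}\|\nabla f\|^2+\mathcal{O}(\delta^{m+4})$, so $\zeta_1$ is the $q$-variance of $q\rho^{m+2}\|\nabla f\|^2$; rewriting $q\rho^{m+2}\|\nabla f\|^2=f\mathcal{L}_{q,\rho}f-\tfrac12\mathcal{L}_{q,\rho}(f^2)$ does not turn this into $\left<f^2,q(\mathcal{L}_{q,\rho}f)^2\right>_{dV}$ — it is a genuinely different functional of $f$. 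So a careful execution of your Hoeffding decomposition yields leading variance terms of a different form (and in fact a smaller order, $\mathcal{O}(\delta^{-m}N^{-2})+\mathcal{O}(N^{-1})$, so the final summary bound $\mathcal{O}(N^{-2}\delta^{-m-2},N^{-1},\delta^2)$ still follows), but it does not reproduce \eqref{LunvarInt} as displayed; the last steps of your sketch assert identities that do not hold. To match the paper you would have to adopt its asymmetric-summand case analysis; to stay with your (cleaner) symmetric route you must carry the kernel expansion one order further and state the terms it actually produces, and be explicit about this divergence rather than claiming agreement.
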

\begin{proof}
By definition we have
\[ \mathbb{E}\left[ \frac{c^{-1}}{N}\vec f^\top L_{\rm un} \vec f \right] =  \mathbb{E}\left[c^{-1} \sum_{i \neq j} \left( L_{\rm un}\right)_{ij} \vec f_i \vec f_j \right] =
 \mathbb{E}\left[f(x_i) c^{-1}\left( L_{\rm un}\vec f \right)_i \right]  \]
where the term $i=j$ is zero and so the sum is over the $N(N-1)$ terms where $i\neq j$.  Since $x_i$ are sampled according to the density $q$ we have
\begin{align} \mathbb{E}\left[ \frac{c^{-1}}{N}\vec f^\top L_{\rm un} \vec f \right] &= \int f q^2 \rho^{m+2} \left( \Delta f + \nabla \log(q^2\rho^{m+2}) \cdot \nabla f \right) dV + \mathcal{O}(\delta^2) \nonumber \\ &= \left<f,q^2\rho^{m+2}\left( \Delta f + \nabla \log(q^2\rho^{m+2}) \cdot \nabla f \right) \right>_{L^2(\mathcal{M},dV)}  + \mathcal{O}(\delta^2) \nonumber \\
&= \left<f,q \mathcal{L}_{q,\rho} f\right>_{dV}  + \mathcal{O}(\delta^2). \end{align}
We can now compute the variance of the estimator $ \frac{c^{-1}}{N}\vec f^\top L_{\rm un} \vec f$ which estimates $\left<f,q \mathcal{L}_{q,\rho} f\right>_{dV}$.  To find the variance we need to compute
\begin{align}\label{ES} \mathbb{E}\left[ \left( \frac{c^{-1}}{N}\vec f^\top L_{\rm un} \vec f \right)^2 \right]  = \frac{c^{-2}}{N^2}\mathbb{E}\left[ \left( \sum_{i \neq j} \left(L_{\rm un}\right)_{ij}f_i f_j \right)\left( \sum_{k \neq l} \left(L_{\rm un}\right)_{kl}f_k f_l \right) \right]
\end{align}
Notice that when $i,j,k,l$ are all distinct, by independence we can rewrite these terms of \eqref{ES} as
\begin{align}  \frac{a_1}{N^2}\mathbb{E}\left[ \sum_{i\neq j} c^{-1}\left(L_{\rm un}\right)_{ij}f_i f_j \right] \mathbb{E}\left[ \sum_{k\neq l}c^{-1}\left(L_{\rm un}\right)_{kl}f_k f_l  \right] = a_1\left<f,q\mathcal{L}_{q,\rho}f\right>^2_{dV}. \end{align}
The constant $a_1 \equiv \frac{(N-2)(N-3)}{N(N-1)}$ accounts for the fact that of the $N^2(N-1)^2$ total terms in \eqref{ES}, only $N(N-1)(N-2)(N-3)$ terms have distinct indices.
Since $i\neq j$ and $k\neq l$, we next consider the terms where either $i\in\{k,l\}$ or $j\in \{k,l\}$ but not both.  Using the symmetry of $L_{\rm un}$, by changing index names we can rewrite all four combinations as $i=k$ so that these terms of \eqref{ES} can be written as
\begin{align}  4 \mathbb{E}_{x_i} &\hspace{-1pt} \left[ \frac{1}{N^2} \sum_i \left(  \mathbb{E}_{x_j}\left[ \sum_{j} c^{-1}\left(L_{\rm un}\right)_{ij}f_i f_j \right] \mathbb{E}_{x_l}\left[ \sum_{l}c^{-1}\left(L_{\rm un}\right)_{il}f_i f_l  \right] \right) \right] \nonumber\\&= \frac{4}{N^2}\mathbb{E}_{x_i}\left[\sum_i f(x_i)^2 (\mathcal{L}_{q,\rho}f(x_i))^2 \right]  \nonumber \\
&= \frac{4}{N} \left<f^2,q(\mathcal{L}_{q,\rho}f)^2 \right>_{dV} \end{align}
Finally, we consider the terms where $i\in\{k,l\}$ and $j\in \{k,l\}$.  By symmetry we rewrite the two possibilities as $i=k$ and $j=l$ and these terms become,
\begin{align}  \frac{2c^{-2}}{N^2}\mathbb{E}\left[ \left( \sum_{i \neq j} \left(L_{\rm un}\right)_{ij}f_i f_j \left(L_{\rm un}\right)_{ij}f_i f_j \right) \right] = \frac{2 a_2 c^{-1}}{N} \left<f^2,q \mathcal{L}_{q,\rho}(f^2) \right>_{dV} \end{align}
where the constant $a_2 \equiv \frac{m_{2,2}}{m_2}$ is the ratio between the second moment $m_2$ of the kernel $W$ and the second moment $m_{2,2}$ of the squared kernel $W^2$.

We can now compute the variance of the estimator
\begin{align} \textup{var}\left(\frac{c^{-1}}{N}\vec f^\top L_{\rm un} \vec f \right) &= \mathbb{E}\left[ \left( \frac{c^{-1}}{N}\vec f^\top L_{\rm un} \vec f \right)^2 \right] - \mathbb{E}\left[ \frac{c^{-1}}{N}\vec f^\top L_{\rm un} \vec f  \right]^2 \nonumber\\& = \mathbb{E}\left[ \left( \frac{c^{-1}}{N}\vec f^\top L_{\rm un} \vec f \right)^2 \right] - \left<f,q\mathcal{L}_{q,\rho}f\right>^2_{dV} + \mathcal{O}(\delta^2) \nonumber \\
&= \frac{-4N+6}{N(N-1)} \left<f,q\mathcal{L}_{q,\rho}f\right>^2_{dV} + \frac{4}{N} \left<f^2 ,q (\mathcal{L}_{q,\rho}f)^2 \right>_{dV}  \nonumber\\& +4 a_2 m_2^{-1}\frac{ \delta^{-m-2}}{N(N-1)} \left<f^2, q\mathcal{L}_{q,\rho}(f^2) \right>_{dV} + \mathcal{O}(\delta^2). \nonumber
\end{align}
In particular this says that
\[ \textup{var}\left(\frac{c^{-1}}{N}\vec f \,^\top L_{\rm un} \vec f \right) = \mathcal{O}\left(N^{-2} \delta^{-m-2},N^{-1},\delta^2 \right) \]
assuming that all the inner products are finite.
\end{proof}

We should note that determining whether the inner product is finite is nontrivial when the manifold in question is unbounded and when the sampling density $q$ is not bounded away from zero.  We will return to this issue below.  First, we compute the bias and variance of the spectral estimates.  For wider applicability, we consider the generalized eigenvalue problem, $c^{-1}L_{\rm un} \vec f = \lambda M \vec f$ for any diagonal matrix $M_{ii} =\mu(x_i)$ which corresponds to the functional
\[ \Lambda(f) = \frac{\vec f \,^\top c^{-1}L_{\rm un} \vec f}{\vec f \,^\top M \vec f}. \]
Notice that $N^{-1}\mathbb{E}[\vec f \,^\top M \vec f \, ] = \left<f^2,\mu q \right>_{dV}$ and this estimator has variance
\[ \textup{var}\left(N^{-1}\vec f \,^\top M \vec f \right) = N^{-1}\left(\left<f^4,\mu^2 q\right>_{dV} - \left<f^2, \mu q\right>_{dV}^2 \right) = \mathcal{O}(N^{-1}).\]
A particular example which draws significant interest is the so-called `normalized graph Laplacian' where $M=D$,  implying that $\mu(x_i) = q(x_i)\rho(x_i)^m + \mathcal{O}(\delta^2)$.

\begin{thm}[Spectral bias and variance]\label{spectral} Under the same assumptions as Theorem \ref{spectralLun} we have
\begin{equation}\label{LunSpecBias} \mathbb{E}[\Lambda(f)] = \mathbb{E}\left[ \frac{ \vec f \,^\top c^{-1} L_{\rm un}\vec f}{\vec f\,^\top M \vec f} \right] = \frac{\left<f,q\mathcal{L}_{q,\rho}f \right>_{dV}}{\left<f^2,\mu q\right>_{dV}} + \mathcal{O}(\delta^2,N^{-1}) \end{equation}
and
\begin{align}\label{LunSpecVar} \textup{var}\left(\Lambda(f) \right) = a \frac{ \delta^{-m-2}}{N(N-1)} \frac{\left<f^2,q\mathcal{L}_{q,\rho}(f^2) \right>_{dV}}{\left<f^2,\mu q\right>_{dV}^2} + \frac{4}{N} \frac{\left<f^2 ,q (\mathcal{L}_{q,\rho}f)^2 \right>_{dV}}{\left<f^2,\mu q\right>_{dV}^2} + \mathcal{O}(\delta^2,N^{-1}).  \end{align}
So assuming the inner products are finite (for example if $\mathcal{M}$ is a compact manifold) we have $ \textup{var}\left((cN)^{-1} \vec f \,^\top L_{\rm un}\vec f \right) = \mathcal{O}(N^{-2}\delta^{-m-2},N^{-1},\delta^2)$.
\end{thm}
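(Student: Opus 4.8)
The plan is to treat $\Lambda(f)=X/Y$ as a ratio of two scalar estimators whose first two moments are already in hand, and apply the delta method. Write $X \equiv (cN)^{-1}\vec f\,^\top L_{\rm un}\vec f$ and $Y \equiv N^{-1}\vec f\,^\top M\vec f$, so $\Lambda(f)=X/Y$. Theorem \ref{spectralLun} supplies, via \eqref{pwLunInt} and \eqref{LunvarInt}, $\mathbb{E}[X] = \left<f,q\mathcal{L}_{q,\rho}f\right>_{dV} + \mathcal{O}(\delta^2) =: \mu_X$ together with the explicit formula for $\textup{var}(X)$, and in particular $\textup{var}(X)=\mathcal{O}(N^{-2}\delta^{-m-2},N^{-1},\delta^2)$; the short computation just before Theorem \ref{spectral} gives $\mathbb{E}[Y]=\left<f^2,\mu q\right>_{dV} =: \mu_Y$ and $\textup{var}(Y)=\mathcal{O}(N^{-1})$. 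I assume throughout that the inner products are finite and $\mu_Y>0$, which holds e.g.\ for compact $\mathcal{M}$ and for the natural geometry $\tilde g$ of Section \ref{uniqueconstruction}.

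First I would establish concentration of the denominator: Chebyshev gives $\mathbb{P}(|Y-\mu_Y|>\mu_Y/2)=\mathcal{O}(N^{-1})$, so on the complementary event the Neumann series $1/Y = \mu_Y^{-1}\sum_{k\ge0}(-(Y-\mu_Y)/\mu_Y)^k$ converges and can be truncated after the quadratic term with remainder of higher order in $(Y-\mu_Y)/\mu_Y$, while the bad event contributes only $\mathcal{O}(N^{-1})$ to $\mathbb{E}[\Lambda(f)]$ and $\textup{var}(\Lambda(f))$ once $\Lambda(f)$ is shown to be almost surely controlled. On the good event, with $\tilde X = X-\mu_X$ and $\tilde Y = Y-\mu_Y$,
\[ \frac{X}{Y} = \frac{\mu_X}{\mu_Y} + \frac{\tilde X}{\mu_Y} - \frac{\mu_X\tilde Y}{\mu_Y^2} - \frac{\tilde X\tilde Y}{\mu_Y^2} + \frac{\mu_X\tilde Y^2}{\mu_Y^3} + (\textup{higher order}). \]

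Taking expectations kills the two linear terms, leaving $\mathbb{E}[\Lambda(f)] = \mu_X/\mu_Y - \textup{cov}(X,Y)/\mu_Y^2 + \mu_X\,\textup{var}(Y)/\mu_Y^3 + \cdots$; since $|\textup{cov}(X,Y)|\le\sqrt{\textup{var}(X)\,\textup{var}(Y)}$ and $\textup{var}(Y)=\mathcal{O}(N^{-1})$, both corrections are, in the consistent regime $N\delta^{m+2}\to\infty$, of order no larger than the terms retained below plus $\mathcal{O}(\delta^2)$, hence $\mathbb{E}[\Lambda(f)] = \mu_X/\mu_Y + \mathcal{O}(\delta^2,N^{-1})$; expanding $\mu_X/\mu_Y = (\left<f,q\mathcal{L}_{q,\rho}f\right>_{dV}+\mathcal{O}(\delta^2))/\left<f^2,\mu q\right>_{dV}$ yields \eqref{LunSpecBias}. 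For the variance, to leading order $\textup{var}(\Lambda(f)) = \textup{var}(X)/\mu_Y^2 - 2\mu_X\textup{cov}(X,Y)/\mu_Y^3 + \mu_X^2\textup{var}(Y)/\mu_Y^4 + \cdots$; dividing the formula \eqref{LunvarInt} for $\textup{var}(X)$ by $\left<f^2,\mu q\right>_{dV}^2$ reproduces the two displayed terms of \eqref{LunSpecVar}, and the remaining contributions, bounded by Cauchy--Schwarz, are absorbed into the retained terms and the stated $\mathcal{O}(\delta^2,N^{-1})$ error.

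The hard part is not the delta-method algebra but making the ratio expansion rigorous with a random denominator: one must show the Neumann remainder for $1/Y$ is integrable and genuinely $\mathcal{O}(N^{-1})$, which needs either a deterministic lower bound on $Y$ (immediate on compact $\mathcal{M}$) or a truncation argument on the event $\{|Y-\mu_Y|>\mu_Y/2\}$ combined with a crude almost-sure bound on $\Lambda(f)$; on non-compact $\mathcal{M}$ this is precisely where the finiteness hypotheses on the inner products do the work. A secondary and routine point is to check that the $\mathcal{O}(\delta^2)$ and $\mathcal{O}(N^{-1})$ errors inherited from Theorem \ref{spectralLun} survive division by the fixed positive constant $\mu_Y$ and combination with the denominator fluctuations.
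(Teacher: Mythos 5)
Your proposal is correct and follows essentially the paper's own route: the paper likewise treats $\Lambda(f)$ as a ratio estimator $a/b$ with $a=(cN)^{-1}\vec f\,^\top L_{\rm un}\vec f$ and $b=N^{-1}\vec f\,^\top M\vec f$, notes that $\textup{var}(b)$ and $\textup{cov}(a,b)$ are $\mathcal{O}(N^{-1})$, and then invokes the standard ratio (delta-method) estimates together with Theorem \ref{spectralLun} to obtain \eqref{LunSpecBias} and \eqref{LunSpecVar}. The only substantive difference is that the paper controls the numerator--denominator covariance by a direct index-splitting computation (terms with a shared index contribute $\mathcal{O}(N^{-1})$, terms with distinct indices factor by independence), which yields $\mathcal{O}(N^{-1})$ without the regime assumption your Cauchy--Schwarz bound requires, whereas you give more detail than the paper does on rigorously expanding the random denominator.
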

\begin{proof} We consider $\Lambda(f)$ to be a ratio estimator of the form $\Lambda(f) = \frac{a}{b}$ where $a = N^{-1}\vec f \,^\top c^{-1} L_{\rm un}\vec f$ and $b = N^{-1} \vec f\,^\top M \vec f$.  The correlation of $a$ and $b$ is given by
\[ \mathbb{E}[(a-\overline a)(b-\overline b)] = \frac{m_2 \delta^{m+2}}{2 N^2(N-1)} \sum_{i \neq j,k} f(x_i) ( L_{\rm un})_{ij} f(x_j) f(x_k)^2 \mu(x_k) - \overline a \overline b = \mathcal{O}(N^{-1}) \]
since the sum of the terms with $k=i$ or $k=j$ is clearly order $N^{-1}$, and when both $k\neq i$ and $k\neq j$ the expectation is equal to $\overline a \overline b$ by independence.  Since the variance of $b$ and the correlation are both order $N^{-1}$ by the standard ratio estimates, we have
\[ \mathbb{E}\left[\frac{a}{b}\right] = \frac{\overline a}{\overline b} + \mathcal{O}(N^{-1})  \]
and
\[ \textup{var}\left(\frac{a}{b}\right) = \frac{\textup{var}(a)}{\overline b^2} + \mathcal{O}(N^{-1}). \]
Combined with Theorem \ref{spectralLun}, these equations yield the desired result.
\end{proof}

Comparing Theorems \ref{pointwiseLun} and \ref{spectral} we find a surprising result, namely that the optimal $\delta$ for spectral approximation of the operator $\mathcal{L}_{q,\rho}$ is different from the optimal $\delta$ for pointwise approximation.  To our knowledge this has not been noted before in the literature.  We find the optimal choice of $\delta$ by balancing the squared bias with the variance of the respective estimators.  For the optimal pointwise approximation we need $\delta^4 = N^{-1}\delta^{-m-2}$ so the optimal choice is $\delta \propto N^{-1/(m+6)}$ and the combined error of the estimator is then $\mathcal{O}(N^{-2/(m+6)})$.  In contrast, for optimal spectral approximation we need $\delta^4 = N^{-2}\delta^{-m-2}$ so the optimal choice is
\[ \delta \propto N^{-2/(m+6)} \] and the combined error (bias and standard deviation) is
\[ \mathcal{O}(N^{-4/(m+6)}). \]
The one exception to this rule is the case $m=1$, where the second term in the spectral variance dominates, so that the optimal choice is  $\delta \propto N^{-1/3}$ and the combined error is order $N^{-1}$.

Since graph Laplacians are often used spectrally (for example to find low-dimen\-sional coordinates with the diffusion map embedding \cite{belkin2003laplacian,diffusion}, spectral clustering \cite{NJW,von2008consistency}, applications to time series analysis \cite{GiannakisPNAS,DMDC}, and the spectral exterior calculus \cite{berry2018spectral}) this implies that the choice of bandwidth $\delta$ should be significantly smaller in these applications than suggested in the literature \cite{SingerEstimate,BH14}.  We demonstrate this for the cutoff kernel on a circle in the example below.
{
\begin{figure}
\begin{center}
\subfigure[]{\includegraphics[width=.45\linewidth]{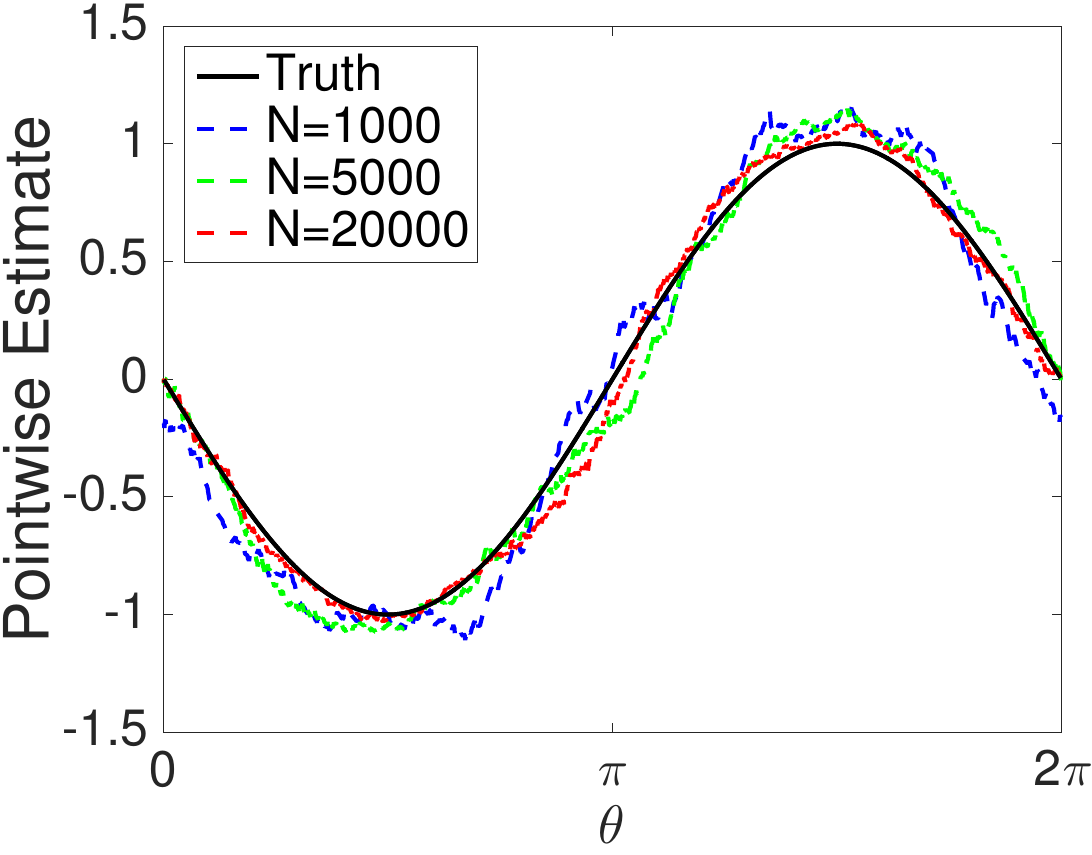}}
\subfigure[]{\includegraphics[width=.45\linewidth]{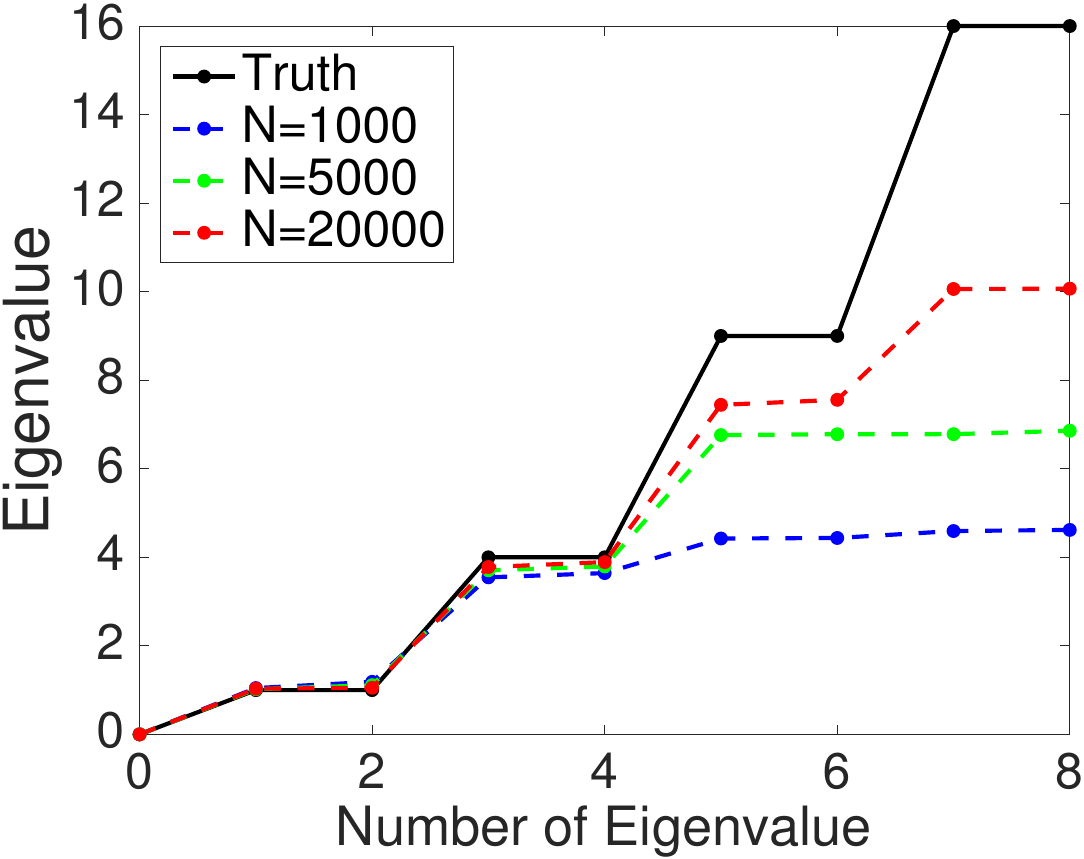}} \\
\subfigure[]{\includegraphics[width=.45\linewidth]{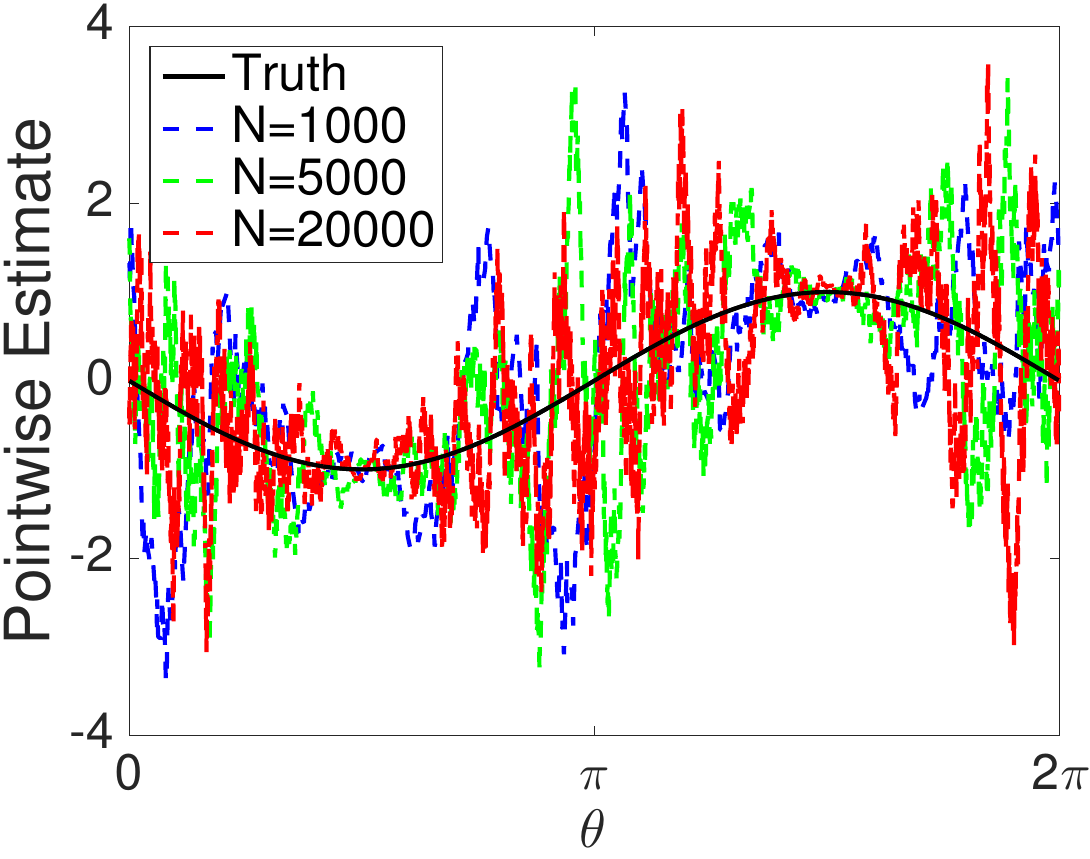}}
\subfigure[]{\includegraphics[width=.45\linewidth]{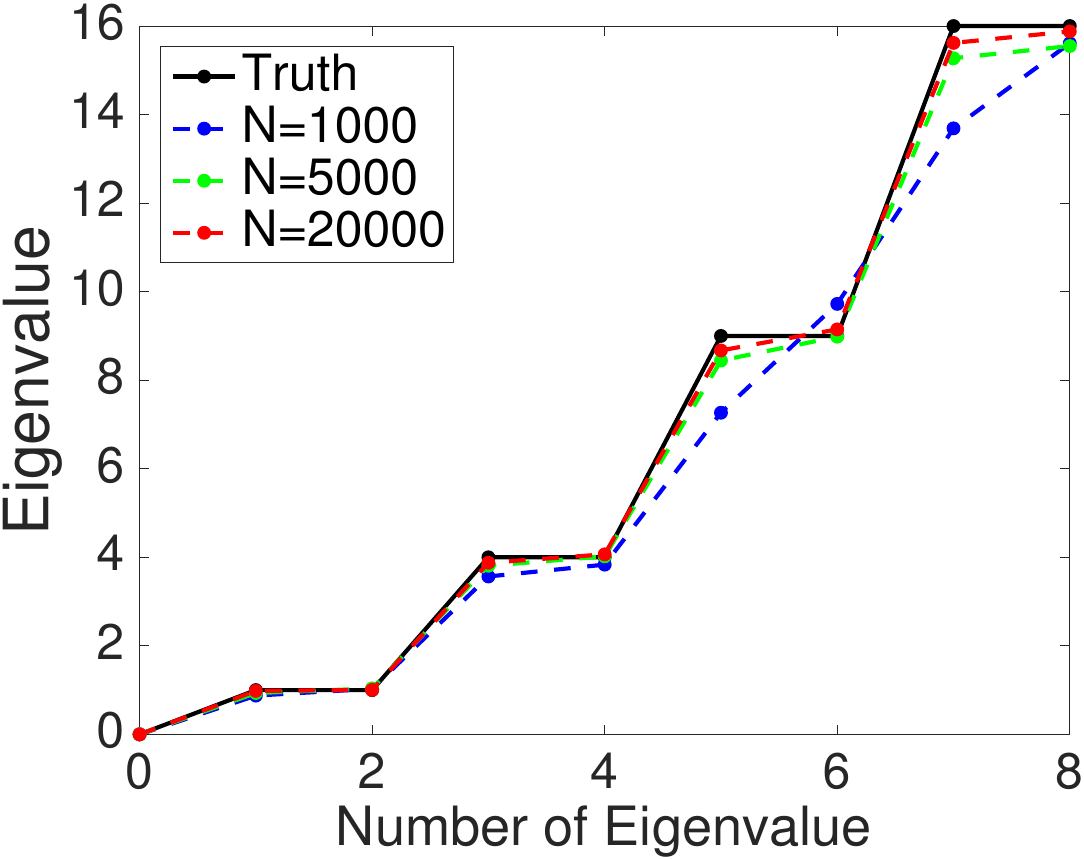}}
\end{center}
\caption{\label{figureApp} \rm For $N \in \{1000,5000,20000\}$ uniformly random data points on a unit circle. (a) Using $\delta=3N^{-1/7}$ and letting $\vec f_i = \sin(\theta_i)$ we compare the pointwise estimate $\left(c^{-1}L_{\rm un}\vec f \right)_i$ (red, dashed) to the true operator $\Delta f(x_i) = -\sin(\theta_i)$ (black, solid). (b) Using $\delta=3N^{-1/7}$ we compare the first 9 eigenvalues of $c^{-1}L_{\rm un}$ (red, dashed) to the first 9 eigenvalues of $\Delta$ (black, solid). (c,d) Same as (a,b) respectively but with $\delta=3N^{-1/3}$.}
\end{figure}

\begin{examp}[Pointwise and spectral estimation on the unit circle] \rm
We illustrate by example the difference in bandwidth required for pointwise versus spectral approximation that are implied by Theorems \ref{pointwiseLun} and \ref{spectral}.
Consider data sets consisting of $N\in\{1000,5000,20000\}$ points $\{x_i = (\sin(\theta_i),\cos(\theta_i))^\top \}_{i=1}^{N}$ sampled uniformly from the unit circle and the unnormalized graph Laplacian $c^{-1}L_{\rm un}$ constructed using the cutoff kernel.  We first chose $\delta=3N^{-1/(m+6)} = 3N^{-1/7}$ (the constant 3 was selected by hand tuning) which is optimal for pointwise estimation of the Laplace-Beltrami operator $\Delta$ as shown in Figure \ref{figureApp}(a)(b).  Next, we set $\delta=3N^{-1/3}$ which is optimal for spectral estimation as shown in Figure \ref{figureApp}(c)(d).  This demonstrates how spectral estimation is optimal for a much smaller value of $\delta$ that pointwise estimation.  Despite the extremely poor pointwise estimation when $\delta =3N^{-1/3}$, the spectral estimation, which involves an integrated quantity, is superior using this significantly smaller value of $\delta$.  Notice that the relative error in the eigenvalues increases as the eigenvalues increase. This phenomenon is explained at the end of the next section.

In general we only know that the optimal choice is $\delta \propto N^{-2/(m+6)}$ for $m>1$ and $\delta \propto N^{-1/3}$ for $m=1$, and the constants for the optimal choice are quite complex.  However, this does indicate the correct order of magnitude for $\delta$, especially for large data sets.  Figure \ref{figureApp} dramatically illustrates the differences in optimal pointwise and optimal spectral estimation.  As mentioned above, graph Laplacians are often used for spectral approximation, so previous analyses which tune $\delta$ for optimal pointwise estimation \cite{SingerEstimate,BH14} are misleading for many applications.

\begin{figure}
\begin{center}
\subfigure[]{\includegraphics[width=.45\linewidth]{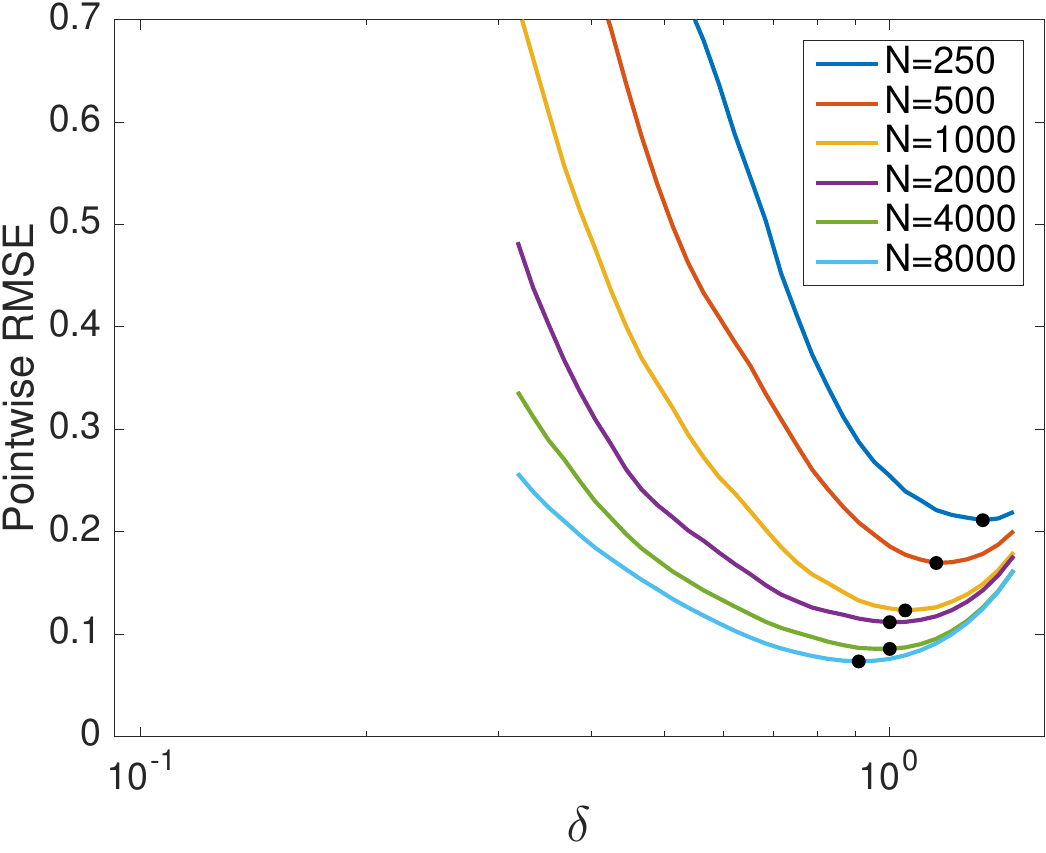}}
\subfigure[]{\includegraphics[width=.45\linewidth]{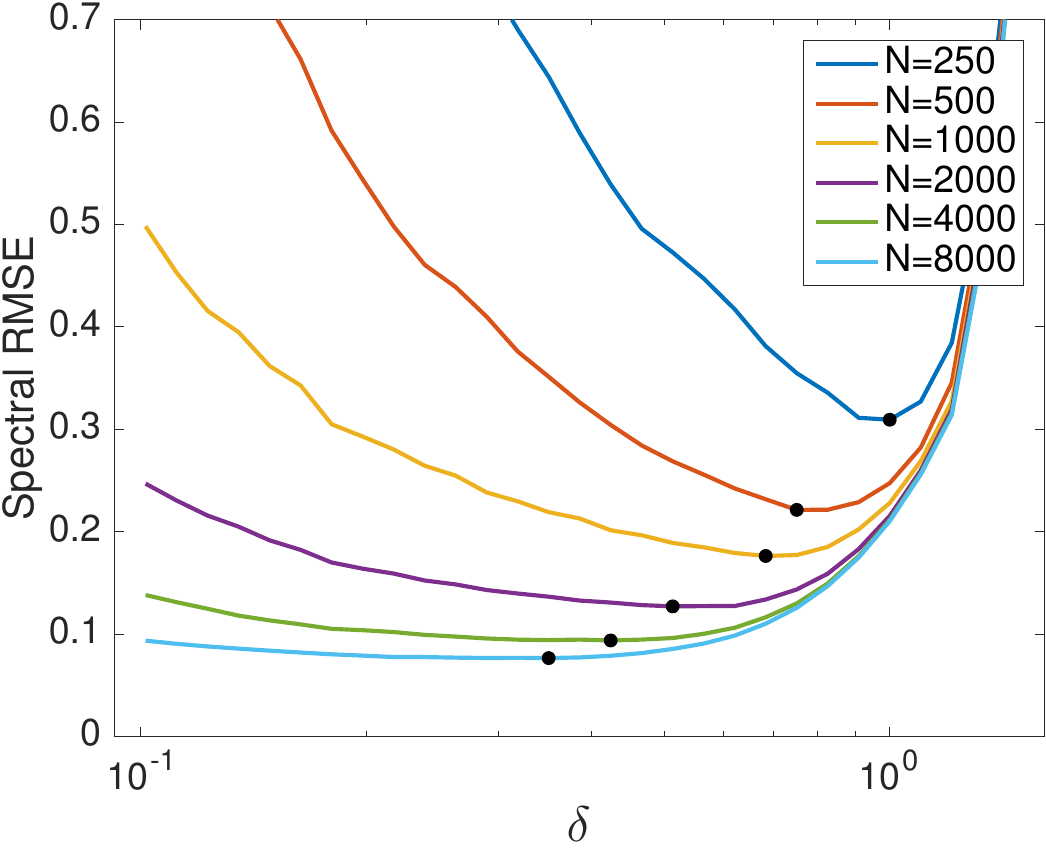}} \\
\subfigure[]{\includegraphics[width=.45\linewidth]{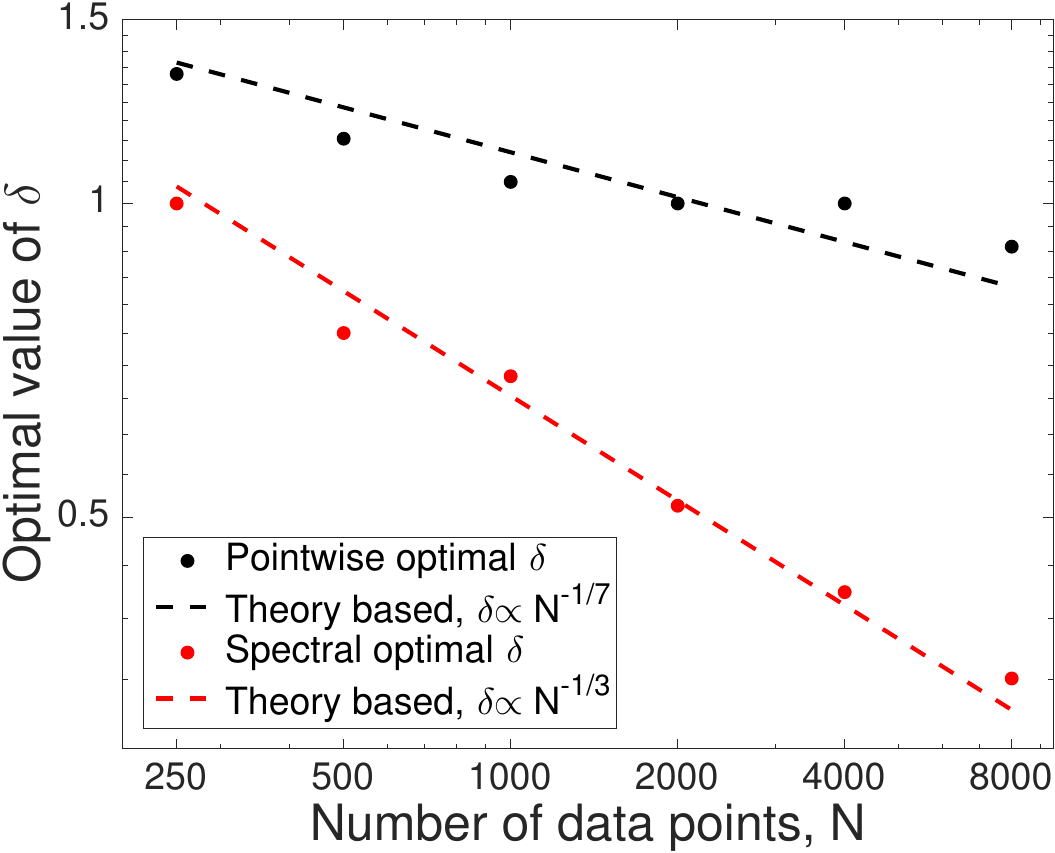}}
\end{center}
\caption{\label{figureApp2} \rm For $N \in \{250,500,1000,2000,4000,8000\}$ we show (a) the root mean squared error (RMSE) of the pointwise approximation of $c^{-1}L_{\rm un}\vec f \approx -\sin(x_i)$ where $\vec f_i = \sin(x_i)$ and (b) the RMSE of the first 5 eigenvalues of $c^{-1}L_{\rm un}$.  Both (a) and (b) are averaged over 10 random data sets for each $N$ and each $\delta$ and the minimum RMSE in each curve is highlighted with a black point.  (c) For each $N$, we plot the optimal $\delta$ for pointwise approximation (black points) and the theoretical power law $\delta \propto N^{-1/7}$ (black, dashed line) and the optimal $\delta$ for spectral approximation (red points) and the theoretical power law $\delta \propto N^{-1/3}$ (red, dashed line). }
\end{figure}

In Figure \ref{figureApp2} we verify the power laws for the optimal choice of $\delta$ for pointwise and spectral estimation.  For $N\in \{250,500,1000,2000,4000,8000\}$ we compute the pointwise and spectral root mean squared error (RMSE) for a wide range of values of $\delta$ and then plot the optimal value of $\delta$ as a function of $N$.  To estimate the pointwise error for a fixed $N$ and $\delta$ we generated $N$ uniformly random points on a circle $\{x_i = (\sin(\theta_i),\cos(\theta_i))^\top \}_{i=1}^{N}$ and then construct the unnormalized graph Laplacian $c^{-1}L_{\rm un}$ using the cutoff kernel.  We then multiplied this Laplacian matrix by the vector $\vec f_i = \sin(x_i)$ so that $\left(c^{-1}L_{\rm un}\vec f \right)_i \approx \Delta \sin(x_i) = -\sin(x_i)$ and we computed the RMSE
\[ \left(\sum_{i=1}^N \left(\left(c^{-1}L_{\rm un}\vec f \right)_i - (-\sin(x_i))\right)^2 \right)^{1/2} \]
between the estimator and the limiting expectation.  We repeated this numerical experiment 10 times for each value of $N$ and $\delta$ and the average RMSE is shown in Figure \ref{figureApp2}(a).  To estimate the spectral error, we computed the smallest 5 eigenvalues of $c^{-1}L_{\rm un}$ and found the RMSE between these eigenvalues and the true eigenvalues $[0,1,1,4,4]$ of the limiting operator $\Delta$.  This numerical experiment was repeated 10 times for each value of $N$ and $\delta$ and the average RMSE is shown in Figure \ref{figureApp2}(b).  Finally, in Figure \ref{figureApp2}(c) we plot the value of $\delta$ which minimized the pointwise error (black) and the spectral error (red) for each value of $N$ and we compare these data points to the theoretical power laws for the pointwise error $\delta \propto N^{-1/7}$ (black, dashed line) and spectral error $\delta \propto N^{-1/3}$ (red, dashed line) respectively.
\end{examp}

\subsection{Limiting geometries and spectral convergence}

We now show that the operator that is approximated spectrally in Theorem \ref{spectral} is a Laplace-Beltrami operator on the manifold $\mathcal{M}$, but with respect to a conformal change of metric.  Consider the functional approximated spectrally by $L_{\rm un}$ as shown in \eqref{LunSpecBias} which is
\begin{equation}\label{SpecFunctional} \Lambda(f) \to \frac{\left<f,q\mathcal{L}_{q,\rho}f \right>_{dV}}{\left<f^2,\mu q\right>_{dV}} = \frac{\left<f,q^2\rho^{m+2}\left(\Delta f - \nabla \log q^2 \rho^{m+2} \cdot \nabla f \right)\right>_{dV}}{\left<f^2,\mu q\right>_{dV}}. \end{equation}
A conformal change of metric corresponds to a new Riemannian metric $\tilde g \equiv \varphi g$, where $\varphi(x)>0$, and which has Laplace-Beltrami operator
\begin{equation}\label{DeltildeApp} \Delta_{\tilde g}f = \frac{1}{\varphi}(\Delta f - (m-2)\nabla \log \sqrt{\varphi} \cdot \nabla f).  \end{equation}
In order to rewrite the operator in \eqref{SpecFunctional} as a Laplace-Beltrami operator, the function $\varphi$ must be chosen to be $\varphi^{\frac{m-2}{2}} = q^2\rho^{m+2}$.  Moreover, we need to change the inner product in \eqref{SpecFunctional} to $d\tilde V$ which is the volume form of the new metric given by
\begin{equation}\label{volform2} d\tilde V = \sqrt{|\tilde g|} = \sqrt{|\varphi g|} = \varphi^{m/2} \sqrt{|g|} = \varphi^{m/2} \, dV. \end{equation}
Changing the volume form and substituting $\Delta_{\tilde g}$ in \eqref{SpecFunctional} we find
\[ \Lambda(f) \to \frac{\left<f,q^2\rho^{m+2} \varphi^{1-m/2} \Delta_{\tilde g}f\right>_{d\tilde V}}{\left<f^2,\mu q \varphi^{-m/2} \right>_{d\tilde V}} = \frac{\left<f,\Delta_{\tilde g}f\right>_{d\tilde V}}{\left<f^2,\mu q \varphi^{-m/2} \right>_{d\tilde V}}\]
where the second equality follows from the definition $\varphi^{\frac{m-2}{2}} = q^2\rho^{m+2}$.  Finally, in order for the denominator to represent the appropriate spectral normalization we require
\[ \mu = q^{-1}\varphi^{m/2} = q^{\frac{m+2}{m-2}}\rho^{m\left(\frac{m+2}{m-2}\right)}. \]
which implies that
\[ \Lambda(f) \to \frac{\left<f,\Delta_{\tilde g}f\right>_{d\tilde V}}{\left<f,f \right>_{d\tilde V}}. \]
This immediately shows that when $m\neq 2$ we can always choose $\rho,\mu$ to estimate the operator $\Delta_{\tilde g}$ for any conformally isometric geometry $\tilde g = \varphi g$.  In the special case $m=2$ this can also be achieved by setting $\rho = q^{-1/2}$ and $\mu = q\varphi^{-1}$.  However, if we assume that $\rho=q^{\beta}$ for some power $\beta$, then there are three choices of $\rho,\mu$ which have the same geometry for every dimension $m$, we summarize these in the table below.
\begin{center}
\begin{tabular}{|l|c|c|c|c|} \hline Geometry & $\tilde g$ & $d\tilde V$ & $\rho$ & $\mu$  \\ \hline
Sampling measure geometry & $q^{2/d}g$ & $q\,dV$ & $q^{-1/m}$ & $1$ \\ \hline
Embedding geometry  & $g$ & $dV$ & $q^{-2/(m+2)}$ & $q^{-1}$ \\ \hline
Inverse sampling geometry & $q^{-1}g$ & $q^{-d/2}\,dV$ & $q^{-1/2}$ & $q^{m/2+1}$ \\ \hline
\end{tabular}
\end{center}
For the choice $\beta = -1/m$ we find $\mu = 1$, which is explored in the main body of the paper. The choice $\mu=1$ is the only one allowing an unweighted graph construction. To reconstruct the embedding geometry, one can estimate the Laplace-Beltrami operator $\Delta_g$ using $\beta = \frac{-2}{m+2}$. Finally, if we select $\beta = -1/2$ we find $\mu = q^{m/2 + 1}$, which is closely related to the results of \cite{BH14}.

The above construction shows that an appropriately chosen graph Laplacian is a consistent estimator of the normalized Dirichlet energy
\[  \Lambda(f) \to \frac{\left<f,\Delta_{\tilde g}f\right>_{d\tilde V}}{\left<f,f \right>_{d\tilde V}} = \frac{\int_{\mathcal{M}}||\nabla_{\tilde g}f||^2 d\tilde V }{\int_{\mathcal{M}} f^2 d\tilde V} \]
for any conformally isometric geometry $\tilde g = \varphi g$.  The minimizers of this normalized Dirichlet energy are exactly the eigenfunctions of the Laplace-Beltrami operator $\Delta_{\tilde g}$, so Theorem \ref{spectral} is the first step towards spectral convergence.  However, in the details of Theorem \ref{spectral} there is a significant barrier to spectral convergence.  This barrier is a very subtle effect of the second term in the variance of the estimator.  Rewriting \eqref{LunSpecVar} from Theorem \ref{spectral} in terms of the new geometry we find
\begin{align}\label{LunVarNew} &\textup{var}\left(\Lambda(f) \right) \nonumber \\=& a \frac{ \delta^{-m-2}}{N(N-1)} \frac{\left<f^2,\Delta_{\tilde g}(f^2) \right>_{d\tilde V}}{\left<f,f\right>_{d\tilde V}^2} + \frac{4}{N} \frac{\left<f^2 ,q^{-1}\varphi^{m/2}(\Delta_{\tilde g}f)^2 \right>_{d\tilde V}}{\left<f,f\right>_{d\tilde V}^2} + \mathcal{O}(\delta^2,N^{-1}). \end{align}
The first term in \eqref{LunVarNew} normally controls the bias-variance trade-off since it diverges as $\delta\to 0$, and the second term is typically higher order.  However, the integral which defines the constant in the second error term has the potential to be infinite when $q$ is not bounded below.  Ignoring the terms which depend on $f$, we need $q^{-1}\varphi^{m/2}\,d\tilde V = q^{-1}\varphi^m \,dV$ to be integrable in order for the variance of the estimator to be well-defined, which proves the following result.

\begin{thm}[Spectral Convergence, Part 1]\label{spec1} Let $\{x_i\}_{i=1}^N$ be sampled from a density $q$.  Consider a conformally equivalent metric $\tilde g = \varphi g$ such that $q^{-1}\varphi^m$ is integrable with respect to $dV$.  Define the unnormalized Laplacian $L_{\rm un}$ using a kernel $K_{\delta}(x,y) = h\left(\frac{||x-y||^2}{\delta^2\rho(x)\rho(y)}\right)$ for any $h$ with exponential decay where
\[ \rho = \left\{ \begin{array}{ll} q^{\frac{2}{m+2}}\varphi^{\frac{m-2}{m+2}} & m\neq 2 \\ q^{-1/2} & m=2 \end{array} \right. \]
and define $M_{ii}=\mu(x_i)$ where
\[ \mu = \left\{ \begin{array}{ll} q^{-1}\varphi^{m/2} & m\neq 2 \\ q\varphi^{-1} & m=2 \end{array} \right. \]
then for $f\in C^3(\mathcal{M},\tilde g)$ the functional $\Lambda(f) = \frac{\vec f \,^\top c^{-1}L_{\rm un} \vec f}{\vec f \,^\top M \vec f}$ which corresponds to the generalized eigenvalue problem $c^{-1} L_{\rm un} \vec f = \lambda M \vec f$ is a consistent estimator of the normalized Dirichlet energy functional
\[ \mathbb{E}[\Lambda(f)] \to_{N\to\infty}  \frac{\left<f,\Delta_{\tilde g}f\right>_{d\tilde V}}{\left<f,f \right>_{d\tilde V}} = \frac{\int_{\mathcal{M}}||\nabla_{\tilde g}f||^2 d\tilde V }{\int_{\mathcal{M}} f^2 d\tilde V}  \]
and the estimator $\Lambda(f)$ has bias of leading order $\delta^2$ and finite variance which is of leading order $\delta^{-m-2}N^{-2}$.  The optimal choice $\delta \propto N^{-2/(m+6)}$ yields a root mean squared error of order $N^{-4/(m+6)}$.
\end{thm}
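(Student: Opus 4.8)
The plan is to obtain Theorem~\ref{spec1} by packaging Theorem~\ref{spectral} with the conformal-change-of-metric bookkeeping already carried out in this subsection. First I would substitute the prescribed bandwidth into the defining identity $\varphi^{(m-2)/2}=q^2\rho^{m+2}$: for $m\neq 2$ the stated $\rho=q^{2/(m+2)}\varphi^{(m-2)/(m+2)}$ is precisely the solution of that equation, while for $m=2$ the equation degenerates and one instead fixes $\rho=q^{-1/2}$ and $\mu=q\varphi^{-1}$. With this substitution, the numerator $\langle f,q\mathcal{L}_{q,\rho}f\rangle_{dV}$ of the limiting functional in \eqref{LunSpecBias} rewrites, using \eqref{LunOp}, the product rule, and $d\tilde V=\varphi^{m/2}\,dV$, as $\langle f,\Delta_{\tilde g}f\rangle_{d\tilde V}$; and the normalization $\langle f^2,\mu q\rangle_{dV}=\langle f^2,\mu q\varphi^{-m/2}\rangle_{d\tilde V}$ collapses to $\langle f,f\rangle_{d\tilde V}$ exactly when $\mu=q^{-1}\varphi^{m/2}$. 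This yields the claimed limit with bias $O(\delta^2)$, inherited verbatim from Theorem~\ref{spectral}, for any $f$ (e.g.\ compactly supported) for which these limiting integrals converge.

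Second, I would verify that the variance is finite — the one genuinely new point over the compact case. Rewriting the two leading variance terms of \eqref{LunSpecVar} in the $\tilde g$ geometry produces \eqref{LunVarNew}: the first term is integrated against $d\tilde V$ and is harmless for $f\in C^3$, but the second carries the weight $q^{-1}\varphi^{m/2}\,d\tilde V=q^{-1}\varphi^{m}\,dV$. Since $f$ and $\Delta_{\tilde g}f$ are bounded on the relevant support, the constant $\langle f^2,q^{-1}\varphi^{m/2}(\Delta_{\tilde g}f)^2\rangle_{d\tilde V}$ is finite precisely under the hypothesis that $q^{-1}\varphi^m$ be $dV$-integrable; this is exactly where that assumption enters. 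Hence $\textup{var}(\Lambda(f))$ is finite, with the $\delta$-dependent term $a\,\delta^{-m-2}/(N(N-1))$ dominating as $\delta\to 0$, so the variance is of leading order $\delta^{-m-2}N^{-2}$.

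Finally, the optimal bandwidth comes from the usual bias--variance trade-off, as in the discussion following Theorem~\ref{spectral}: squared bias is $O(\delta^4)$ and the leading variance is $O(\delta^{-m-2}N^{-2})$, so equating $\delta^4=\delta^{-m-2}N^{-2}$ gives $\delta^{m+6}=N^{-2}$, i.e.\ $\delta\propto N^{-2/(m+6)}$, at which point both bias and standard deviation are $O(\delta^2)=O(N^{-4/(m+6)})$. I would check that the remaining $O(N^{-1})$ ratio-estimator correction and the lower-order $O(\delta^2)$ terms are dominated when $m\ge 2$; for $m=1$ the $N^{-1}$ term is not dominated, and that borderline case ($\delta\propto N^{-1/3}$, error $O(N^{-1})$) would be stated separately.

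The main obstacle I anticipate is not the algebra but controlling the non-compact tails in the inputs: the asymptotic expansion \eqref{bh14eq} and the bias/variance estimates of Theorems~\ref{pointwiseLun}--\ref{spectral} must be available with globally controlled, not merely pointwise, remainders — which is the role of the standing hypotheses on $\mathcal{M}$ (injectivity radius bounded below, curvature bounded above) and the estimates of \cite{heinthesis,BH14} — and one must confirm that the limiting integrals $\langle f,\Delta_{\tilde g}f\rangle_{d\tilde V}$ and $\langle f,f\rangle_{d\tilde V}$ converge, which is automatic for compactly supported $f$ once the integrability hypothesis is in force. Assembling these pieces gives the theorem.
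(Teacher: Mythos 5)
Your proposal takes essentially the same route as the paper: the theorem is obtained there by packaging Theorem~\ref{spectral} with the conformal-change computation \eqref{SpecFunctional}--\eqref{volform2}, identifying $\mu=q^{-1}\varphi^{m/2}$ so the denominator becomes $\left<f,f\right>_{d\tilde V}$, observing that the second variance term in \eqref{LunVarNew} carries the weight $q^{-1}\varphi^{m/2}\,d\tilde V=q^{-1}\varphi^{m}\,dV$ (which is exactly where the integrability hypothesis enters), and balancing $\delta^{4}=\delta^{-m-2}N^{-2}$. One caution: if you actually carry out your first step you will find that the displayed bandwidth does not satisfy $\varphi^{(m-2)/2}=q^{2}\rho^{m+2}$ as written --- the solution is $\rho=q^{-2/(m+2)}\varphi^{(m-2)/(2(m+2))}$, which is what the paper's table and subsequent text use (e.g.\ $\varphi=q^{2/m}$ gives $\rho=q^{-1/m}$ and $\varphi=1$ gives $\rho=q^{-2/(m+2)}$) --- so the exponents in the theorem statement should be read as a typo rather than asserted to check out verbatim.
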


In particular, for the choice $\rho = q^{-2/(m+2)}$ we find $q^{-1}\varphi^m = q^{-1}$ which will often not be integrable on a non-compact manifold when $q$ is not bounded away from zero.  This implies that we cannot spectrally approximate the Laplace-Beltrami operator with respect to the embedding metric for many non-compact manifolds.  Notice, that this variance barrier only applies to spectral convergence, so we can still approximate the operator pointwise using Theorem \ref{pointwiseLun}.  Similarly, when $\beta = -1/2$ we find $q^{-1}\varphi^m = q^{-1-m}$ and the exponent on $q$ is negative, also leading to the possibility of divergence on non-compact manifolds.  This shows yet another advantage of the choice $\beta = -1/m$, since $q^{-1}\varphi^m = q$ which is simply the sampling measure and is always integrable with respect to $dV$ by definition.

Theorem \ref{spec1} shows that the functional $\Lambda(f)$ converges to the normalized Dirichlet energy functional whose minimizers are the eigenfunctions of the Laplace-Beltrami operator $\Delta_{\tilde g}$.  It remains only to show that vectors which minimize the discrete functional $\Lambda(f)$ converge to the minimizers of the normalized Dirichlet energy functional.  In fact this has already been shown in \cite{von2008consistency} assuming that the spectrum of $\Delta_{\tilde g}$ is discrete.

Next, will address an issue of spectral convergence that was introduced in \cite{von2008consistency}, which suggests that unnormalized graph Laplacians have worse spectral convergence properties than normalized Laplacians.  The theory in that article is  the basis of our spectral convergence result below.  However, a subtle detail reveals that the unnormalized Laplacian $c^{-1}L_{\rm un}$ does not suffer from the spectral convergence issue they consider.  For an unnormalized Laplacian, $L_{\rm un}=D-W$ where $D_{ii} = \sum_{j=1}^N W_{ij}$ is the degree function, ``Result 2" of \cite{von2008consistency} states that if the first $r$ eigenvalues of the limiting operator $\mathcal{L}_{q,\rho}$ do not lie in the range of the degree function
\[ d(x) = \lim_{N\to\infty} \sum_{j=1}^N W(x,x_j), \]
then the first $r$ eigenvalues of the unnormalized Laplacian $L_{\rm un}$ converge to those of the limiting operator (spectral convergence).  This would suggest that the spectral convergence only holds for eigenvalues which are separated from the range of $D_{ii}$.  However, notice that we divide $L_{\rm un}$ by the normalization constant $c = \mathcal{O}(N\delta^{m+2})$ whereas $\lim_{N\to\infty} \sum_{j=1}^N W(x,x_j) \propto q(x) N\delta^m$ which implies that
\[ c^{-1}d(x) = \mathcal{O}(\delta^{-2}). \]
Since $\delta \to 0$ as $N\to \infty$, this implies that the range of the true degree function $c^{-1}d(x)$ approaches $\infty$ as $N$ grows.  This special class of unnormalized Laplacians avoids this difficulty because the first order term of the degree function in exactly cancelled by the first order term of the kernel $W_{ij}$ in the limit of large data, which is why the constant $c$ is higher order than the degree function in terms of $\delta$.  

 Finally, while Theorem \ref{spec1} gives an optimal bias-variance tradeoff of $\delta \propto N^{-2/(m+6)}$, this may not be sufficient for spectral convergence in high dimensions.  Taking $\delta\to 0$ and $N\to \infty$ simultaneously was first addressed in \cite{spec0} and more recently in \cite{spec1,spec2,spec3}.  The best results \cite{spec2} require a bounded manifold and also $\left( N^{-1}\log N \right)^{1/m} < \delta$ but the optimal bias-variance tradeoff only satisfies this constraint for $m<6$, so for $m\geq 6$ we require $\delta = \left(N^{-1}\log N \right)^{1/m}$. While \cite{spec2} has the best current result, we conjecture that spectral convergence still holds on unbounded manifolds when the spectrum $\Delta_{\tilde g}$ is discrete and also when $\delta \propto N^{-2/(m+6)}$ for $m\geq 6$.

\begin{thm}[Spectral Convergence, Part 2]\label{spectralconv} Under the assumptions of Theorem \ref{spec1}, in the limit of large data the eigenvalues of $c^{-1}M^{-1}L_{\rm un}$ converge to those of the limiting operator $\Delta_{\tilde g}$, assuming the spectrum is discrete and $\mathcal{M}$ is bounded.

In particular, for a manifold without boundary or with a smooth boundary, spectral convergence holds for the unnormalized Laplacian when $\rho = q^{-1/m}$ and $\mu=1$.
\end{thm}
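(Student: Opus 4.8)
The plan is to obtain the statement from the abstract spectral-convergence machinery of \cite{von2008consistency} by checking its three hypotheses: (i) the quadratic forms attached to $c^{-1}L_{\rm un}$ and to $M$ converge, with vanishing variance, to the Dirichlet form and the $L^2(d\tilde V)$ form of the limiting problem; (ii) the limiting operator $\Delta_{\tilde g}$ has discrete spectrum; and (iii) the ``Result 2'' separation condition, that no eigenvalue of $\Delta_{\tilde g}$ meets the range of the limiting degree function. Granting these, the Courant--Fischer characterization of the generalized eigenvalues as min--max values of $\Lambda(f) = \frac{\vec f\,^\top c^{-1}L_{\rm un}\vec f}{\vec f\,^\top M\vec f}$, together with the collectively-compact convergence supplied by \cite{von2008consistency}, yields convergence of the eigenvalues of $c^{-1}M^{-1}L_{\rm un}$ to those of $\Delta_{\tilde g}$.

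For (i), I would invoke Theorem \ref{spec1} together with the underlying Theorems \ref{spectralLun} and \ref{spectral}: with the stated choices of $\rho$ and $\mu$ one has $\mathbb{E}[\Lambda(f)] \to \left<f,\Delta_{\tilde g}f\right>_{d\tilde V}/\left<f,f\right>_{d\tilde V}$, the denominator $N^{-1}\vec f\,^\top M\vec f$ is a consistent estimator of $\left<f,f\right>_{d\tilde V}$, and the integrability hypothesis $q^{-1}\varphi^m \in L^1(dV)$ forces every variance appearing in \eqref{LunVarNew} to be finite and $o(1)$. Combined with the pointwise consistency of $c^{-1}L_{\rm un}$ from Theorem \ref{pointwiseLun}, this supplies precisely the form-convergence and compactness inputs required by \cite{von2008consistency}.

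For (ii), in the general case discreteness is part of the hypothesis, but in the distinguished case $\rho = q^{-1/m}$, $\mu = 1$ it is automatic: there $\tilde g = q^{2/m}g$ with volume form $d\tilde V = q\,dV$, so $\textup{vol}_{\tilde g}(\mathcal{M}) = \int_{\mathcal{M}} q\,dV = 1 < \infty$, and by \cite{cianchi2011} a manifold of finite volume with smooth boundary or no boundary has a Laplace--Beltrami operator with purely discrete spectrum. (The case $m=2$, handled by $\rho=q^{-1/2}$, $\mu = q\varphi^{-1}$, is identical.) This is exactly what reduces the general statement to the clean assertion in the second sentence of the theorem.

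For (iii), I would compute the limiting degree function: since $D_{ii} = \sum_j W_\delta(x_i,x_j)$, applying the leading term of \eqref{bh14eq} with $f\equiv 1$ gives $d(x) = \lim_{N\to\infty}\sum_j W_\delta(x,x_j) = m_0\,q(x)\rho(x)^m\,N\delta^m\,(1+\mathcal{O}(\delta^2))$, whereas $c = \frac{m_2}{2}(N-1)\delta^{m+2}$, so $c^{-1}d(x) = \mathcal{O}(\delta^{-2})$. Because $\delta \to 0$ as $N\to\infty$, the range of the rescaled degree function collapses to $\{+\infty\}$, which is disjoint from the finite spectrum of $\Delta_{\tilde g}$; hence the ``Result 2'' hypothesis holds for every truncation level $r$, and all eigenvalues converge. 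The hard part, I expect, will be this last point: ``Result 2'' is stated in \cite{von2008consistency} for the \emph{unrescaled} Laplacian, whose limit is a multiplication-by-$d$ operator minus a compact operator, so one must verify that dividing by the higher-order constant $c$ genuinely transports the essential-spectrum obstruction to infinity within their framework rather than merely rescaling the whole picture, and that the remaining measurability and regularity conditions of their theorem (originally checked for smooth kernels) persist for the variable-bandwidth indicator kernel used here.
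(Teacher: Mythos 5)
Your proposal follows essentially the same route as the paper: the first claim is obtained by combining the convergence of the functional $\Lambda(f)$ to the normalized Dirichlet energy (Theorem \ref{spec1}) with the convergence results of \cite{von2008consistency}, the obstruction from their ``Result 2'' is removed exactly as you do, by noting $c^{-1}d(x)=\mathcal{O}(\delta^{-2})\to\infty$ so the rescaled degree function's range is disjoint from the discrete spectrum, and the second claim follows from $q^{-1}\varphi^m=q$ being integrable with $\textup{vol}_{\tilde g}(\mathcal{M})=\int_{\mathcal{M}}q\,dV=1$, whence \cite{cianchi2011} gives discreteness. Your closing caveats (rescaling within the \cite{von2008consistency} framework, indicator-kernel regularity) are reasonable points of care but are treated in the paper exactly at the level of detail you describe, so no genuinely different argument is involved.
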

\begin{proof}
Notice that the eigenvalues of $c^{-1}M^{-1}L_{\rm un}\vec f = \lambda \vec f$ exactly correspond to the minimizers of the functional $\Lambda(f)$ in Theorem \ref{spec1}.  The first claim follows from the convergence of the functional $\Lambda(f)$ to the normalized Dirichlet energy combined with convergence results of \cite{spec2} as discussed above.

The guarantee of spectral convergence for $\rho=q^{-1/m}$ and $\mu=1$ follows from the fact that $q^{-1}\varphi^m = q$ is always integrable.  Moreover, the volume of the manifold with respect to $d\tilde V = q \, dV$ is always $\textup{vol}_{d\tilde V}(\mathcal{M}) = \int_{\mathcal{M}} q\, dV =  1$, and for any manifold of finite volume with a smooth boundary the spectrum of the Laplace-Beltrami operator is discrete \cite{cianchi2011}.
\end{proof}

Since the matrix $c^{-1}M^{-1}L_{\rm un}$ is not symmetric it is numerically preferable to solve the symmetric generalized eigenvalue problem $c^{-1}L_{\rm un}\vec f = \lambda M \vec f$. % since both $c^{-1}L_{\rm un}$ and $M$ are symmetric.

Finally, it is often noticed empirically that the error in the spectral approximations increases as the value of the eigenvalue increases.  To understand this phenomenon, we will use the variance formula \eqref{LunVarNew}.  First, if $f$ is an eigenfunction with $\Delta_{\tilde g}f = \lambda f$ we note that
\begin{align} \left<f^2,\Delta_{\tilde g}(f^2)\right>_{d\tilde V} &=\left<f^2,2f\Delta_{\tilde g}f - 2\nabla_{\tilde g}f \cdot \nabla_{\tilde g} f\right>_{d\tilde V}\nonumber \\&= 2\lambda\left<f^2,f^2\right>_{d\tilde V}  - 2\left<f^2,\nabla_{\tilde g}f \cdot \nabla_{\tilde g} f\right>_{d\tilde V}   \nonumber \\
&= 2\lambda\left<f^2,f^2\right>_{d\tilde V}  - 2\left<-\textup{div}_{\tilde g}\left(f^2\nabla_{\tilde g} f \right),f\right>_{d\tilde V}\nonumber \\&= 2\lambda\left<f^2,f^2\right>_{d\tilde V}  - \frac{2}{3}\left<-\textup{div}_{\tilde g}\left(\nabla_{\tilde g} f^3 \right),f\right>_{d\tilde V} \nonumber \\
&= 2\lambda\left<f^2,f^2\right>_{d\tilde V}  - \frac{2}{3}\left<\Delta_{\tilde g}( f^3),f\right>_{d\tilde V}\nonumber \\&= 2\lambda\left<f^2,f^2\right>_{d\tilde V}  - \frac{2}{3}\left<f^3,\Delta_{\tilde g}f\right>_{d\tilde V} = \frac{4}{3}\lambda \left<f^2,f^2\right>_{d\tilde V}.\nonumber
\end{align}
Applying this formula to \eqref{LunVarNew} we find $\hat \lambda \equiv \Lambda(f)$ is a consistent estimator of the eigenvalue $\lambda$ with relative variance

\ \vspace*{-10pt}
\begin{align}\label{SpecVar1} \frac{\textup{var}(\hat \lambda )}{\lambda} &= a \frac{ \delta^{-m-2}}{N(N-1)} \frac{4\left<f^2,f^2 \right>_{d\tilde V}}{3\left<f,f\right>_{d\tilde V}^2} + \frac{4 \lambda}{N} \frac{\left<f^2 ,q^{-1}\varphi^{m/2} f^2 \right>_{d\tilde V}}{\left<f,f\right>_{d\tilde V}^2} + \mathcal{O}(\delta^2,N^{-1}). \nonumber \end{align}
Although the first term (which is typically the leading order term, especially on compact manifolds) can be tuned to give a constant relative error, the second term still grows proportionally to the eigenvalue $\lambda$.  We should expect the spectrum to be fairly accurate until the order of the second term is equal to that of the first term, at which point the relative errors in the eigenvalues will grow rapidly.  When $\varphi = q^{2/m}$ as in the CkNN construction, the inner products in the two terms are the same, so the spectrum will be accurate when
\[ \lambda < \lambda_{\max} \equiv \frac{ a \delta^{-m-2} }{3(N-1)} = \frac{a N^{2(m+2)/(m+6)}}{3(N-1)} = \mathcal{O}\left(N^{\frac{m-2}{m+6}} \right) \]
where the second and third equalities hold for the optimal choice $\delta \propto N^{-2/(m+6)}$.  This constraint does not apply to the case $m=1$, where the second term in \eqref{LunVarNew} dominates, or to the case $m=2$ where the second and first terms are equal order for the optimal choice of $\delta$.  In all cases, the relative error increases as the eigenvalues increase.

\section{Conclusion}\label{conclusion}

We have introduced a new method called continuous k-nearest neighbors as a way to construct a single graph from a point cloud, that is provably consistent on connected components.  By proving the consistency of the geometry (spectral convergence of the graph Laplacian to the Laplace-Beltrami operator) we support our conjecture that CkNN is topologically consistent, meaning that correct topology can be extracted in the large data limit. For many finite-data examples from compact Riemannian manifolds, we have shown that CkNN compares favorably to persistent homology approaches. 
%Moreover, noncompact manifolds and nonuniform sampling are handled seamlessly.

The proposed method replaces a small $\epsilon$ radius, or $k$ in the k-nearest neighbors method, with a unitless continuous parameter $\delta$. The theory proves the existence of a correct choice of $\delta$, and it needs to be tuned in specific examples.
While the difference between the CkNN and the kNN constructions is fairly simple, the crucial difference is that the approximation \eqref{kNNdensity} only holds for $k$ small, relative to $N$.  By varying the parameter $\delta$ and holding $k$ constant at a small value we can construct multi-scale approximations of our manifold that are still consistent with the underlying manifold.  This contrasts with the standard kNN approach, where the parameter $k$ is varied and both the coarseness of the manifold approximation and the underlying manifold geometry are changing simultaneously (because the scaling function is changing).

Surprisingly, a careful analysis of the bias and variance of the graph Laplacian as a spectral estimator of the Laplace-Beltrami operator is a key element of the proof of consistency. The variance can be infinite on non-compact manifolds, depending on the geometry, creating a previously unknown barrier to spectral convergence. The variance calculation also allows us to explain why the relative error of the eigenvalue increases along with the eigenvalue, and we determine the part of the spectrum that can estimated with constant relative error, as a function of the data size $N$. 
%These results lend further support to the choice of CkNN as a graph construction, since in this case spectral convergence holds for any Riemannian manifold, compact or non-compact, with smooth boundary or no boundary, and for any smooth sampling density.

%\bigskip

We would like to thank D.~Giannakis for helpful conversations.
%\bibliographystyle{aims}
%\bibliography{berrysauer}
\providecommand{\href}[2]{#2}
\providecommand{\arxiv}[1]{\href{http://arxiv.org/abs/#1}{arXiv:#1}}
\providecommand{\url}[1]{\texttt{#1}}
\providecommand{\urlprefix}{URL }

\end{document}